    \pgfplotsset{compat=newest}
\tikzset{
    rotated halfcircle/.style={%
        mark=halfcircle*,
        mark color=black,
        fill=red,
        every mark/.append style={rotate=#1}
    }
}
\newcommand{\s}{\ensuremath{\sigma}}
\newcommand{\shA}{\ensuremath{\hat{\sA}}}
\renewcommand{\S}{\ensuremath{\mathcal{S}}}
\renewcommand{\P}{\ensuremath{\mathcal{P}}}
\newcommand{\wit}{\textit{wit}}
\newcommand{\overarrow}{\overrightarrow}
\newcommand{\Exists}[1]{\exists\,#1.\:}
\newcommand{\Forall}[1]{\forall\,#1.\:}
\newcommand{\Q}[2]{Q_{#1}\,#2.}
\newcommand{\Saa}[1]{\S^{\A}_{#1\aleph_{0}}}
\newcommand{\Sasa}[1]{\S^{\sA}_{#1\aleph_{0}}}
\renewcommand{\t}{\tau}
\newcommand{\Sigmadag}{\Sigma^{\dag}}
\newcommand{\sAdag}{\sA^{\dag}}
\newcommand{\sBdag}{\sB^{\dag}}
\newcommand{\For}[1]{For(#1)}
\newcommand{\GT}[1]{{}}
\newcommand{\A}{\ensuremath{\mathcal{A}}}
\newcommand{\B}{\ensuremath{\mathcal{B}}}
\newcommand{\sA}{\ensuremath{\mathbb{A}}}
\newcommand{\sB}{\ensuremath{\mathbb{B}}}
\newcommand{\sC}{\ensuremath{\mathbb{C}}}
\newcommand{\F}{\ensuremath{\mathcal{F}}}
\newcommand{\T}{\ensuremath{\mathcal{T}}}
\renewcommand{\S}{\ensuremath{\mathcal{S}}}
\renewcommand{\P}{\ensuremath{\mathcal{P}}}
\newcommand{\vars}{\textit{vars}}
\renewcommand{\t}{\tau}
\newcommand{\nset}[2]{\ensuremath{P_{#2}(#1)}}
\newcommand{\minmods}[2]{\ensuremath{\mathrm{minmods}_{#1, #2}}}
\newcolumntype{P}[1]{>{\centering\arraybackslash}p{#1}}
\newcommand{\inj}{\ensuremath\mathit{inj}\xspace}
\newcommand{\sur}{\ensuremath\mathit{sur}\xspace}
\Crefname{theorem}{Theorem}{Theorems}
\Crefname{corollary}{Corollary}{Corollaries}
\Crefname{example}{Example}{Examples}
\begin{document}
%
\title{The nonexistence of unicorns and many-sorted L{\"o}wenheim--Skolem theorems}
%
%
\author{Benjamin Przybocki\inst{1}\textsuperscript{(\Letter)} \orcidlink{0009-0007-5489-1733} 
\and
Guilherme Toledo\inst{2} \orcidlink{0000-0002-6539-398X} 
\and
Yoni Zohar\inst{2} \orcidlink{0000-0002-2972-6695} 
\and Clark Barrett\inst{1} \orcidlink{0000-0002-9522-3084}
}
\authorrunning{B. Przybocki et al.}
%
\institute{Stanford University, USA\\
\email{benjamin.przybocki@gmail.com, barrett@cs.stanford.edu} \and
Bar-Ilan University, Israel\\
\email{\{guivtoledo,yoni206\}@gmail.com}}
\maketitle              
\begin{abstract}
Stable infiniteness, strong finite witnessability, and smoothness are model-theoretic properties relevant to theory combination in satisfiability modulo theories. Theories that are strongly finitely witnessable and smooth are called \emph{strongly polite} and can be effectively combined with other theories. Toledo, Zohar, and Barrett conjectured that stably infinite and strongly finitely witnessable theories are smooth and therefore strongly polite. They called counterexamples to this conjecture \emph{unicorn theories}, as their existence seemed unlikely. We prove that, indeed, unicorns do not exist. We also prove versions of the L{\"o}wenheim--Skolem theorem and the {\L}o\'s--Vaught test for many-sorted logic.
\end{abstract}
\section{Introduction}

Given decision procedures for theories $\T_1$ and $\T_2$
with disjoint signatures, 
is there a decision procedure for $\T_1 \cup \T_2$?
In general, the answer is ``not necessarily'', 
 but a
 central question in Satisfiability Modulo Theories (SMT)~\cite{BT18} is: what assumptions on $\T_1$ and $\T_2$ 
suffice for
theory combination? 
This line of research began with Nelson and Oppen's theory combination procedure \cite{NelsonOppen}, which applies when $\T_1$ and $\T_2$ are stably infinite, 
roughly meaning that every $\T_i$-satisfiable quantifier-free formula is satisfied by an infinite $\T_i$-interpretation for $i \in \{1,2\}$.

The Nelson--Oppen procedure is quite useful, but requires \emph{both} theories to be stably infinite,
which is not always the case (e.g., the theories of bit-vectors and finite datatypes are not stably infinite).
Thus, sufficient properties of 
only one of the theories were identified,
such as
gentleness \cite{gentle}, shininess \cite{shiny}, and flexibility~\cite{flexible}. 
The most relevant property for our purposes is 
strong politeness~\cite{polite,stronglypolite,casalrasga2018,algebraicdatatypes}. It is essential to the functioning of the SMT solver cvc5 \cite{cvc5}, which is called billions of times per day in industrial production code. 
A theory is \emph{strongly polite} if it is smooth and strongly finitely witnessable, which are model-theoretic properties we will define later. 
These properties are more involved than stable infiniteness, 
so proving a theory to be strongly polite is more difficult. 
But the advantage of strongly polite theories is that they can be combined with any other decidable theory, 
including theories that are not stably infinite.

Given the abundance of model-theoretic properties relevant to theory combination, some of which interact in subtle ways, it behooves us to understand the logical relations between them. 
Recent papers \cite{BarTolZoh,BarTolZoh2} 
have sought to understand the relations between seven model-theoretic properties---including stable infiniteness, smoothness, and strong finite witnessability---by determining which combinations of properties are possible in various signatures. 
In most cases, a theory with the desired combination of properties was constructed,
 or it was proved that none exists. 
 The sole exception was theories that are stably infinite and strongly finitely witnessable but not smooth, dubbed \emph{unicorn theories} and conjectured not to exist. Our main result, \Cref{thm-no-unicorn}, confirms this conjecture.

Besides completing the taxonomy of properties from~\cite{BarTolZoh,BarTolZoh2}, 
our result has practical consequences. The nonexistence of unicorns implies that strongly polite theories can be equivalently defined as those that are stably infinite and strongly finitely witnessable. Since it is easier to prove that a theory is stably infinite than to prove that it is smooth, this streamlines the process of proving that a theory is strongly polite.
Thus, each time a new theory is introduced, proving that it can be combined with other theories becomes easier.\footnote{\cite{BarTolZoh} already proved that stably infinite and strongly finitely witnessable theories can be combined with other theories. Our result gives a new proof (see \Cref{cor-polite}), and shows that their procedure is not more general than polite combination.} Similarly, our results give a new characterization of shiny theories, which makes it easier to prove that a theory is amenable to the shiny combination procedure (see \Cref{cor-shiny}).

We also believe that our result is of theoretical interest. \Cref{thm-smoothness-from-finite-smoothness}, which is the main ingredient in the proof of \Cref{thm-no-unicorn}, can be seen as a variant of the upward L{\"o}wenheim--Skolem theorem for many-sorted logic, since proving that a theory is smooth amounts to proving that cardinalities of sorts can be increased arbitrarily, including to uncountable cardinals. This result may be of independent interest to logicians studying the model theory of many-sorted logic, and we hope the proof techniques are useful to them as well.

Speaking of proof techniques, our proof is curious in that it uses Ramsey's theorem from finite combinatorics. This is not the first time Ramsey's theorem has been used in logic. Ramsey proved his theorem in the course of solving a special case of the decision problem for first-order logic \cite{ramsey1929}. Ramsey's theorem also shows up in the Ehrenfeucht--Mostowski construction in model theory \cite{ehrenfeuchtmostowski}. Our proof actually requires a generalization of Ramsey's theorem, which we prove using the standard version of Ramsey's theorem.

A major component of the proof of \Cref{thm-no-unicorn} amounts to proving a many-sorted version of the L{\"o}wenheim--Skolem theorem. On the course to proving this, we realized that a proper understanding of this theorem for many-sorted logic appears to be missing from the literature,
despite the fact that the SMT-LIB standard~\cite{BarFT-RR-17}
is based on many-sorted logic.
To fill this gap, we prove generalizations of the L{\"o}wenheim--Skolem theorem for many-sorted logic, and use them to prove a many-sorted {\L}o\'s--Vaught test, useful for proving theory completeness.

The remainder of this paper is structured as follows.
\Cref{preliminaries} provides background and definitions
on many-sorted logic and SMT.
\Cref{NoUnicorns} 
proves the main
result of this paper, namely the nonexistence of unicorn theories.
\Cref{LowenheimSkolem} proves new many-sorted variants of
the L{\"o}wenheim--Skolem theorem.
\Cref{conclusion} concludes and presents directions for future work.\footnote{Some proofs are omitted. They can be found in the appendix.}

\section{Preliminaries}\label{preliminaries}

\subsection{Many-sorted first-order logic}

We work in many-sorted first-order logic \cite{Monzano93}. A \emph{signature} $\Sigma$ consists of a non\-empty set $\S_{\Sigma}$ of sorts, a set $\F_{\Sigma}$ of function symbols, and a set $\P_{\Sigma}$ of predicate symbols containing an equality symbol $=_{\s}$ for every sort $\s \in \S_{\Sigma}$.\footnote{When specifying a signature, we often omit the equality symbols, and include them implicitly. We also omit $\sigma$ from $=_{\sigma}$ when it does not cause confusion.} Every function symbol has an arity $(\s_1, \dots, \s_n, \s)$ and every predicate symbol an arity $(\s_1, \dots, \s_n)$, where $\s_1, \ldots, \s_n, \s \in \S_{\Sigma}$ and $n\ge 0$.
Every equality symbol $=_{\s}$ has arity $(\s, \s)$. To quantify a variable $x$ of sort $\s$, we write $\Forall{x : \s}$ and $\Exists{x : \s}$ for the universal and existential quantifiers respectively. Let $|\Sigma| = |\S_{\Sigma}|+|\F_{\Sigma}|+|\P_{\Sigma}|$. If a signature contains only sorts and equalities, we say it is \emph{empty}. Two signatures are said to be \emph{disjoint} if they share at most sorts and equality symbols.

We define $\Sigma$-terms and $\Sigma$-formulas as usual.
The set of free variables of sort $\s$ in  $\varphi$ is denoted $\vars_{\s}(\varphi)$. 
For $S\subseteq\S_{\Sigma}$, let $\vars_S(\varphi) = \bigcup_{\s \in S} \vars_{\s}(\varphi)$. 
We also let $\vars(\varphi) = \vars_{\S_{\Sigma}}(\varphi)$.
A $\Sigma$-sentence is a $\Sigma$-formula with no free variables.

A \emph{$\Sigma$-structure} $\sA$ interprets each sort $\s \in \S_{\Sigma}$ as a nonempty set $\s^\sA$, each function symbol $f \in \F_{\Sigma}$ as a function $f^\sA$ with the appropriate domain and codomain, and each predicate symbol $P \in \P_{\Sigma}$ as a relation $P^\sA$ over the appropriate set, such that $=_{\s}^{\sA}$ is the identity on $\s^\sA$. 
A \emph{$\Sigma$-interpretation} $\A$ is a pair $(\sA, \nu)$, where $\sA$ is a $\Sigma$-structure and $\nu$ is a function, called an \emph{assignment}, mapping each variable $x$ of sort $\s$ to an element $\nu(x) \in \s^\sA$, denoted $x^\A$. We write $t^{\A}$ for the interpretation of the $\Sigma$-term $t$ under $\A$,
which is defined in the usual way. The entailment relation, denoted $\vDash$, is defined as usual.

Two structures are \emph{elementarily equivalent} if they satisfy the same sentences. We say that $\sA$ is an \emph{elementary substructure} of $\sB$ if $\sA$ is a substructure of $\sB$ and, for all formulas $\varphi$ and all assignments $\nu$ on $\sA$, we have $(\sA,\nu) \vDash \varphi$ if and only if $(\sB, \nu) \vDash \varphi$. Note that if $\sA$ is an elementary substructure of $\sB$, then they are elementarily equivalent.  $\A$ is an elementary subinterpretation of $\B$ if $\sA$ is an elementary substructure of $\sB$ and $\A$'s assignment is the same as $\B$'s assignment.

Given a $\Sigma$-structure $\sA$, let $\Sasa{\geq}=\{\s\in \S_{\Sigma}: |\s^{\sA}|\geq\aleph_{0}\}$ and $\Sasa{<}=\S_{\Sigma}\setminus \Sasa{\geq}$. We similarly define $\Saa{\geq}$ and $\Saa{<}$ for a $\Sigma$-interpretation $\A$.  

A \emph{$\Sigma$-theory} $\T$ is a set of $\Sigma$-sentences, called the \emph{axioms} of $\T$. We write $\vdash_\T \varphi$ instead of $\T \vDash \varphi$. Structures satisfying $\T$ are called $\T$-\emph{models}, and interpretations satisfying $\T$ are called $\T$-\emph{inter\-pre\-ta\-tions}. We say a $\Sigma$-formula is $\T$-\emph{satisfiable} if it is satisfied by some $\T$-interpretation, and we say two $\Sigma$-formulas are $\T$-\emph{equivalent} if every $\T$-interpretation satisfies one if and only if it satisfies the other.  
$\T$ is \emph{complete} if for every sentence $\varphi$, we have $\vdash_{\T} \varphi$ or $\vdash_{\T} \neg \varphi$. 
$\T$ is \emph{consistent} if there is no formula $\varphi$ such that $\vdash_{\T} \varphi$ and $\vdash_{\T}\neg\varphi$.
If $\Sigma_1$ and $\Sigma_2$ are disjoint, let $\Sigma_1 \cup \Sigma_2$ be the signature with the union of their sorts, function symbols, and predicate symbols. 
Given a $\Sigma_1$-theory $\T_1$ and a $\Sigma_2$-theory $\T_2$, the $(\Sigma_1 \cup \Sigma_2)$-theory $\T_1 \cup \T_2$ is the theory whose axioms are the union of the axioms of $\T_1$ and $\T_2$.

The following theorem, proved in \cite{Monzano93}, is a many-sorted variant of the first-order
compactness theorem.

\begin{theorem}[Compactness Theorem \cite{Monzano93}] \label{compact}
    A set of $\Sigma$-formulas $\Gamma$ is satisfiable if and only if every finite subset of $\Gamma$ is satisfiable.
\end{theorem}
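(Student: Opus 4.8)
The forward direction is immediate, since any interpretation satisfying $\Gamma$ satisfies each of its subsets; the plan is to prove the converse by reducing the many-sorted statement to the classical single-sorted compactness theorem via relativization. First I would remove free variables: replacing each free variable $x$ of sort $\s$ occurring in $\Gamma$ by a fresh constant symbol $c_x$ of sort $\s$ produces a set $\Gamma_0$ of sentences over the expanded signature $\Sigma' = \Sigma \cup \{c_x\}$ that is satisfiable exactly when $\Gamma$ is, and whose finite subsets are satisfiable exactly when those of $\Gamma$ are; so it suffices to treat sets of sentences. Next I would fix a single-sorted signature $\Sigma^\ast$ containing a fresh unary predicate $P_\s$ for every sort $\s \in \S_\Sigma$, a function symbol of the same number of arguments for each function symbol of $\Sigma'$, a predicate symbol of the same number of arguments for each predicate symbol of $\Sigma'$, and the single equality symbol. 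The translation $\varphi \mapsto \varphi^\ast$ leaves atomic formulas unchanged (identifying every $=_\s$ with $=$) and relativizes quantifiers: $(\Forall{x : \s}\psi)^\ast = \forall x\,(P_\s(x) \to \psi^\ast)$ and $(\Exists{x : \s}\psi)^\ast = \exists x\,(P_\s(x) \wedge \psi^\ast)$. Finally I would let $\mathrm{Ax}$ be the set of $\Sigma^\ast$-sentences asserting that (i) each $P_\s$ is nonempty, (ii) for every $f$ of arity $(\s_1, \dots, \s_n, \s)$, if the arguments lie in $P_{\s_1}, \dots, P_{\s_n}$ then the value of $f$ lies in $P_\s$, and (iii) each $c_x$ with $x$ of sort $\s$ lies in $P_\s$.

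The core of the argument is the equisatisfiability of $\Gamma_0$ with $\Gamma_0^\ast \cup \mathrm{Ax}$. In one direction, any finite $F \subseteq \Gamma_0^\ast \cup \mathrm{Ax}$ mentions only finitely many sorts, and the finite subset of $\Gamma_0$ corresponding to $F \cap \Gamma_0^\ast$ is satisfiable by hypothesis, say by a $\Sigma'$-structure $\sA$; I would turn $\sA$ into a single-sorted structure $\M$ whose universe is the disjoint union of the sets $\s^\sA$ plus one extra ``junk'' element, with $P_\s^\M = \s^\sA$, with functions extended to total functions that output the junk element on incorrectly-sorted inputs, and with predicates holding only on correctly-sorted tuples; a routine induction on formulas shows that $\M$ satisfies the members of $F \cap \Gamma_0^\ast$, while the members of $F$ coming from $\mathrm{Ax}$ hold by construction, the key point being that relativized formulas never access the junk element, so the relativized quantifiers of $\M$ range exactly over the many-sorted domains. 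Hence the single-sorted compactness theorem gives a model $\M$ of all of $\Gamma_0^\ast \cup \mathrm{Ax}$. In the other direction I would read off a $\Sigma'$-structure $\sA$ from $\M$ by taking $\s^\sA = P_\s^\M$ (nonempty by (i)), restricting each function symbol to the relevant product of the $P_{\s_i}^\M$ (well-defined and landing in the right sort by (ii)), restricting each predicate symbol similarly, and interpreting $c_x$ as $\M$ does (an element of $P_\s^\M$ by (iii)); then an induction on formulas---preceded by an induction showing that a $\Sigma'$-term of sort $\s$ always evaluates inside $P_\s^\M$---yields $\sA \vDash \varphi$ if and only if $\M \vDash \varphi^\ast$ for all $\Sigma'$-sentences $\varphi$, so $\sA \vDash \Gamma_0$, which is what we wanted.

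The real content here is the single-sorted compactness theorem, used as a black box; everything else is bookkeeping. The one place where care is needed is that $\S_\Sigma$ may be infinite, so $\mathrm{Ax}$ is an infinite axiom set---but this is harmless precisely because compactness is applied to the full infinite set $\Gamma_0^\ast \cup \mathrm{Ax}$, while each of its finite subsets involves only finitely many sorts, which is exactly what makes the ``junk element'' construction work. The main obstacle is thus not conceptual but lies in carrying out the two inductions relating truth in $\sA$ to relativized truth in $\M$ cleanly; they are entirely routine. A reader preferring a self-contained proof could instead adapt the Henkin term-model construction to the many-sorted setting (with witness constants of every sort) or form a sortwise ultraproduct and prove a many-sorted {\L}o\'s's theorem; I would favour the reduction since it is shortest and reuses the classical theorem intact.
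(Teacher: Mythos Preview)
The paper does not actually prove this theorem: it is quoted as a known result from \cite{Monzano93} and used as a black box throughout. So there is no ``paper's own proof'' to compare against.

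That said, your relativization argument is correct and is the standard way to derive many-sorted compactness from the single-sorted version. It is also very much in the spirit of the paper: the same translation---unary predicates $P_\s$ for sorts, relativized quantifiers, and a small axiom set forcing nonemptiness and well-typedness of function symbols---appears in the appendix in the proofs of \Cref{Lowenheim} and \Cref{losvaughttranslation}. Your remark that only finitely many sorts occur in any finite $F$ is not actually needed (the disjoint-union construction works for all sorts at once, since $\sA$ already interprets every sort), but it does no harm. The two inductions you mention are indeed routine; the alternative Henkin or ultraproduct routes you sketch are exactly what Monzano carries out directly in the many-sorted setting.
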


We say that a $\Sigma$-theory $\T$ has \emph{built-in Skolem functions} if for all formulas $\psi(\overarrow{x}, y)$, there is $f \in \F_\Sigma$ such that $\vdash_{\T} \Forall{\overarrow{x}} (\Exists{y} (\psi(\overarrow{x}, y)) \rightarrow \psi(\overarrow{x}, f(\overarrow{x})))$.\footnote{Intuitively: $\T$ has
enough function symbols to witness all existential formulas.}
The following is a many-sorted variant of 
Lemma~2.3.6 of
\cite{marker2002}.
The proof is almost identical to that of the single-sorted case
from~\cite{marker2002}.

\begin{restatable}{lemma}{builtin}\label{builtin}
    If $\T$ is a $\Sigma$-theory for a countable $\Sigma$, then there is a countable signature $\Sigma^* \supseteq \Sigma$ and $\Sigma^*$-theory $\T^* \supseteq \T$ with built-in Skolem functions.
\end{restatable}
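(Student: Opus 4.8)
The plan is to iterate the standard Skolemization construction, transcribed to the many-sorted setting. I would build an increasing chain of signatures $\Sigma = \Sigma_0 \subseteq \Sigma_1 \subseteq \cdots$ and theories $\T = \T_0 \subseteq \T_1 \subseteq \cdots$ as follows. Given $\Sigma_n$ and $\T_n$, for every $\Sigma_n$-formula $\psi(\overarrow{x}, y)$ — where $\overarrow{x} = x_1, \dots, x_k$ has sorts $\s_1, \dots, \s_k$ and $y$ has sort $\s$ — I introduce a fresh function symbol $f_\psi$ of arity $(\s_1, \dots, \s_k, \s)$, chosen distinct for distinct $\psi$ and disjoint from $\F_{\Sigma_n}$. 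Let $\Sigma_{n+1}$ be $\Sigma_n$ together with all these new function symbols, and let $\T_{n+1}$ be $\T_n$ together with the sentences $\Forall{\overarrow{x}}(\Exists{y}\,\psi(\overarrow{x},y) \rightarrow \psi(\overarrow{x}, f_\psi(\overarrow{x})))$, one for each such $\psi$. Finally set $\Sigma^* = \bigcup_n \Sigma_n$ and $\T^* = \bigcup_n \T_n$.

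The verification then has three routine parts. First, $\Sigma^*$ is countable: since $\Sigma_0 = \Sigma$ is countable it has only countably many formulas, so only countably many symbols $f_\psi$ are added and $\Sigma_1$ is countable; by induction every $\Sigma_n$ is countable, and $\Sigma^*$, a countable union of countable sets, is countable. Second, $\Sigma^* \supseteq \Sigma$ and $\T^* \supseteq \T$ hold by construction. Third, $\T^*$ has built-in Skolem functions: an arbitrary $\Sigma^*$-formula $\psi(\overarrow{x}, y)$ mentions only finitely many sorts, function symbols, and predicate symbols, hence is already a $\Sigma_n$-formula for some $n$; then $f_\psi \in \F_{\Sigma_{n+1}} \subseteq \F_{\Sigma^*}$ and the sentence $\Forall{\overarrow{x}}(\Exists{y}\,\psi(\overarrow{x},y) \rightarrow \psi(\overarrow{x}, f_\psi(\overarrow{x})))$ lies in $\T_{n+1} \subseteq \T^*$, so in particular $\vdash_{\T^*}$ of it holds. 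If one additionally wants the property that makes this lemma useful — that every $\T$-model expands to a $\T^*$-model — this follows by choosing, at each stage, an interpretation of each $f_\psi$ in a $\T_n$-model that sends each tuple $\overarrow{a}$ with $\sA \vDash \Exists{y}\,\psi(\overarrow{a}, y)$ to some witness (and sends all other tuples anywhere), and then taking the union of the resulting expansions along the chain; this uses the axiom of choice.

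I do not expect a genuine obstacle here: the argument is the many-sorted analogue of the single-sorted proof in \cite{marker2002}, and the only points needing care are bookkeeping — (i) tracking the sorts of the bound and free variables of each $\psi$ so that $f_\psi$ is assigned a well-formed arity, and (ii) checking that countability is preserved through the $\omega$-length iteration. Both are straightforward, which is why I would keep this proof short or relegate it to the appendix as the statement already indicates.
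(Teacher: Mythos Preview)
Your proposal is correct and matches the paper's approach: the paper does not spell out a proof but simply notes that it is almost identical to the single-sorted case in \cite{marker2002}, and what you have written is precisely that standard $\omega$-iteration of Skolemization, carried over to the many-sorted setting with the expected bookkeeping on sorts and countability. Your additional remark that every $\T$-model expands to a $\T^*$-model is not part of the lemma statement but is indeed used tacitly later in the paper (e.g., in the proof of \Cref{lem-consistent}), so it is a sensible inclusion.
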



We state a many-sorted generalization of the Tarski--Vaught test, whose proof is also similar to the single-sorted case \cite[Proposition~2.3.5]{marker2002}.

\begin{restatable}[The Tarski--Vaught test]{lemma}{TVTEST}\label{TV test}
    Suppose $\sA$ is a substructure of $\sB$. Then, $\sA$ is an elementary substructure of $\sB$ if and only if $(\sB, \nu) \vDash \Exists{v} \varphi(\overarrow{x}, v)$ implies $(\sA, \nu) \vDash \Exists{v} \varphi(\overarrow{x}, v)$
    for every formula $\varphi(\overarrow{x}, v)$ and assignment $\nu$ over $\sA$.
\end{restatable}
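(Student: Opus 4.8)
The plan is to prove the two implications separately. The left-to-right direction is immediate: if $\sA$ is an elementary substructure of $\sB$, then for any formula $\varphi(\overarrow{x}, v)$ and any assignment $\nu$ over $\sA$, the formula $\Exists{v} \varphi(\overarrow{x}, v)$ has all of its free variables among $\overarrow{x}$, so $(\sB, \nu) \vDash \Exists{v} \varphi(\overarrow{x}, v)$ implies $(\sA, \nu) \vDash \Exists{v} \varphi(\overarrow{x}, v)$ directly by the definition of elementary substructure.

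For the converse, assume the stated Tarski--Vaught condition. The plan is to prove by induction on the structure of $\varphi$ that, for every formula $\varphi$ and every assignment $\nu$ over $\sA$, we have $(\sA, \nu) \vDash \varphi$ if and only if $(\sB, \nu) \vDash \varphi$; since $\sA$ is already assumed to be a substructure of $\sB$, this is exactly what it means for $\sA$ to be an elementary substructure. In the atomic case, a routine sub-induction on terms shows that every $\Sigma$-term over $\sA$ has the same value under $(\sA, \nu)$ and $(\sB, \nu)$, and then, because $\sA$ is a substructure of $\sB$, every predicate symbol (including each equality $=_{\s}$) has the same truth value on tuples drawn from $\sA$. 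The cases $\varphi = \neg \psi$ and $\varphi = \psi_1 \wedge \psi_2$, and hence all Boolean connectives, are immediate from the induction hypothesis. The interesting case is $\varphi = \Exists{v : \s} \psi$: if $(\sA, \nu) \vDash \varphi$, pick a witness $a \in \s^{\sA}$; since $\s^{\sA} \subseteq \s^{\sB}$, the assignment $\nu[v \mapsto a]$ is legal over both structures, so the induction hypothesis gives $(\sB, \nu[v \mapsto a]) \vDash \psi$ and therefore $(\sB, \nu) \vDash \varphi$. Conversely, if $(\sB, \nu) \vDash \varphi$, then applying the Tarski--Vaught hypothesis to the formula $\psi$ yields $(\sA, \nu) \vDash \varphi$ directly. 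Universal quantifiers reduce to this case via $\Forall{v} \psi \equiv \neg \Exists{v} \neg \psi$.

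I do not expect a genuine obstacle; the argument is the classical one from the single-sorted setting, and the only thing to be careful about is the many-sorted bookkeeping. Concretely, in the quantifier case one must carry the sort $\s$ of the bound variable and use that a substructure satisfies $\s^{\sA} \subseteq \s^{\sB}$ for every sort $\s \in \S_{\Sigma}$, which is what lets witnesses transfer in both directions and keeps the extended assignments well-defined. Nonemptiness of each $\s^{\sA}$ is already built into the definition of a $\Sigma$-structure and plays no special role here. This is precisely why, as the text notes, the proof is almost identical to the single-sorted case.
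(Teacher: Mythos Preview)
Your proof is correct and is exactly the standard argument the paper has in mind: the paper does not spell out a proof of this lemma but simply cites \cite[Proposition~2.3.5]{marker2002}, noting that the single-sorted argument carries over, and what you have written is precisely that argument with the many-sorted bookkeeping made explicit.
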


\subsection{Model-theoretic properties}\label{Model-theoretic properties}

\begin{definition}
Let $\Sigma$ be a many-sorted signature,
$S \subseteq \S_{\Sigma}$,
and $\T$ a $\Sigma$-theory.
\begin{itemize}
\item    $\T$ is \emph{stably infinite} with respect to $S$ if for every $\T$-satisfiable quantifier-free formula $\varphi$, there is a $\T$-interpretation $\A$ satisfying $\varphi$ with $|\s^{\A}| \ge \aleph_0$ for every $\s \in S$.
\item    $\T$ is \emph{stably finite} with respect to $S$ if for every quantifier-free 
    $\Sigma$-formula $\varphi$ and $\T$-interpretation $\A$ satisfying $\varphi$, there is a $\T$-interpretation $\B$ satisfying $\varphi$ such that $|\s^\B| \le |\s^\A|$ and $|\s^\B| < \aleph_0$ for every $\s \in S$.
\item $\T$ is \emph{smooth} with respect to $S$ if for every quantifier-free formula $\varphi$, $\T$-interpretation $\A$ satisfying $\varphi$, and function $\kappa$ from $S$ to the class of cardinals such that $\kappa(\s) \geq |\s^{\A}|$ for every $\s \in S$, there is a $\T$-interpretation $\B$ satisfying $\varphi$ with $|\s^{\B}|=\kappa(\s)$ for every $\s \in S$.
\end{itemize}
\end{definition}

%

Next, we define
{\em arrangements}.
    Given a set of sorts $S \subseteq \S_{\Sigma}$, finite sets of variables $V_{\s}$ of sort $\s$ for each $\s\in S$, and equivalence relations $E_{\s}$ on $V_{\s}$, the \emph{arrangement} $\delta_V$ on $V=\bigcup_{\s\in S}V_{\s}$ induced by $E=\bigcup_{\s\in S}E_{\s}$ is 
    \[
        \bigwedge_{\s\in S}\left[\bigwedge_{xE_{\s}y}(x=y) \land \bigwedge_{x\overline{E_{\s}}y}\neg(x=y)\right],
    \]
    where $\overline{E_{\s}}$ is the complement of $E_{\s}$.

\begin{definition}
Let $\Sigma$ be a many-sorted signature,
$S \subseteq \S_{\Sigma}$ a finite set,
and $\T$ a $\Sigma$-theory.
Then $\T$ is \emph{strongly finitely witnessable} with respect to $S$ if there is a computable function $\wit$ from the quantifier-free formulas into themselves such that for every quantifier-free formula $\varphi$:
    \begin{enumerate}[label=(\roman*), itemsep=0pt]
        \item $\varphi$ and $\Exists{\overarrow{w}} \wit(\varphi)$ are $\T$-equivalent, where $\overarrow{w}=\vars(\wit(\varphi))\setminus\vars(\varphi)$; and
        \item given a finite set of variables $V$ and an arrangement $\delta_{V}$ on $V$, if $\wit(\varphi) \land \delta_{V}$ is $\T$-satisfiable, then there is a $\T$-interpretation $\A$ satisfying $\wit(\varphi) \land \delta_{V}$ such that $\s^{\A}=\vars_{\s}(\wit(\varphi) \land \delta_{V})^{\A}$ for every $\s\in S$.
    \end{enumerate}
\end{definition}

\subsection{Notation} \label{sec-notation}
$\mathbb{N}$ denotes the set of non-negative integers. Given $m,n \in \mathbb{N}$, let
$
    [m,n]  := \{\ell \in \mathbb{N} : m \le \ell \le n\} 
    $
    and
   $ [n]  := [1,n].
$
%
%
Given a set $X$, let
$
    \nset{X}{n}  := \{Y \subseteq X : |Y| = n\}$,
    $X^n  := \{(x_1, \dots, x_n) : x_i \in X \ \text{for all} \ i \in [n]\}$, and
    $X^*  := \bigcup_{n \in \mathbb{N}} X^n$.
%
%
For any $x$, we denote $(x,\ldots,x)$ by $(x)^{\oplus n}$.
Given a tuple of tuples $(\overarrow{x_1}, \dots, \overarrow{x_n})$, where $\overarrow{x_i} \in X^*$ for all $i$, we will often treat it as an element of $X^*$ by flattening the tuple.

\section{The nonexistence of unicorns}\label{NoUnicorns}

We now state our main theorem, which implies that unicorn theories do not exist. 
Note that since we are motivated by applications to SMT, we hereafter assume all signatures are countable.\footnote{The paper that introduced unicorn theories \cite{BarTolZoh} also made this assumption.}

\begin{theorem} \label{thm-no-unicorn}
    Assume that $\T$ is a $\Sigma$-theory, where $\Sigma$ is countable. If $\T$ is stably infinite and strongly finitely witnessable, both with respect to $S \subseteq \S_\Sigma$, then $\T$ is smooth with respect to $S$.
\end{theorem}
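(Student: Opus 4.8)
The plan is to deduce smoothness from the conjunction of stable infiniteness and strong finite witnessability. Fix a quantifier-free formula $\varphi$, a $\T$-interpretation $\A$ satisfying $\varphi$, and a target cardinal function $\kappa$ with $\kappa(\s) \ge |\s^{\A}|$ for every $\s \in S$; I must produce a $\T$-interpretation $\B \vDash \varphi$ with $|\s^{\B}| = \kappa(\s)$ for all $\s \in S$. The first move is to reduce from arbitrary $\A$ to a convenient witnessing one: replace $\varphi$ by $\wit(\varphi)$ (using clause (i), the $\T$-equivalence of $\varphi$ with $\Exists{\overarrow{w}} \wit(\varphi)$), pick an arrangement $\delta_{V}$ on the variables of $\wit(\varphi)$ that is realized in $\A$, and apply clause (ii) to get a $\T$-interpretation $\A'$ in which every sort in $S$ is exhausted by the (finitely many) variables of $\wit(\varphi) \land \delta_{V}$ — so each $\s^{\A'}$ for $\s \in S$ is \emph{finite}, with size bounded by the number of variables. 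In particular $\kappa(\s) \ge |\s^{\A'}|$ still holds, so it suffices to blow up sorts starting from a model in which the $S$-sorts are all finite.

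Next I would handle the ``small'' case: show that for finite target sizes the blow-up is possible, i.e. a finite-smoothness statement. This is where I expect to invoke \Cref{thm-smoothness-from-finite-smoothness} (flagged in the introduction as the main ingredient), but let me sketch the idea one might use to prove it directly. Using stable infiniteness on $\wit(\varphi) \land \delta_V$, obtain a $\T$-interpretation $\C$ with $|\s^{\C}| \ge \aleph_0$ for every $\s \in S$; since $\wit(\varphi) \land \delta_V$ pins down, via the arrangement, exactly which variables are equal, in $\C$ the variables still occupy only finitely many elements of each sort, but there is now ``room'' to spare. The goal is to interpolate: for any finite cardinals $n_\s$ with $|\s^{\A'}| \le n_\s$, find a model of size exactly $n_\s$ on each $\s \in S$. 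The natural approach is a Löwenheim--Skolem style argument — pass to the Skolemized theory $\T^*$ over $\Sigma^*$ from \Cref{builtin}, take an elementary substructure (via the Tarski--Vaught test, \Cref{TV test}) generated by the variable-witnesses together with exactly $n_\s - |\s^{\A'}|$ extra chosen elements of each infinite sort in $\C$, and argue that in $\Sigma^*$ the built-in Skolem functions make this substructure elementary while controlling its cardinality. Since $\wit(\varphi) \land \delta_V$ is quantifier-free, it is preserved, and the arrangement forces the variable-witnesses to stay distinct, so the substructure has the right finite sizes on $S$; reducing back to $\Sigma$ gives the desired $\T$-interpretation. For the uncountable/countably-infinite targets $\kappa(\s)$, one instead goes \emph{upward}: add $\kappa(\s)$ fresh constants of sort $\s$ for each such $\s$ together with inequalities asserting they are pairwise distinct (and distinct from the variable-witnesses), use compactness (\Cref{compact}) to get a model of size $\ge \kappa(\s)$, then apply the downward step just described to trim each sort down to exactly $\kappa(\s)$.

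The genuinely hard part is the finite-to-finite interpolation — getting sorts of \emph{exactly} a prescribed finite size while staying a $\T$-model and preserving $\varphi$. Downward Löwenheim--Skolem naturally produces a \emph{countable} elementary substructure, not one of controlled finite size, and the many-sorted setting introduces the subtlety that shrinking one sort can interact with others through function symbols of mixed arity. This is presumably exactly where Ramsey's theorem enters (as advertised in the introduction): to choose the extra elements of each sort so ``uniformly'' that the finitely many function symbols behave coherently on the chosen subset — a Ramsey-type homogeneity argument on tuples of elements, coloring by the quantifier-free type they realize over the variable-witnesses, so that a monochromatic subset of the required size closes up under the relevant operations without creating new identifications. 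Assembling these homogeneous pieces across all sorts in $S$ simultaneously, and verifying that the resulting substructure is still a $\T$-model (this is where one really leans on $\T^*$ having built-in Skolem functions, so that ``closed under Skolem functions'' equals ``elementary''), is the crux; once that machinery is in place, the reduction in the first paragraph plus the compactness step for large cardinals finishes the proof.
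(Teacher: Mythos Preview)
Your proposal has the right high-level shape (reduce to finite $S$-sorts via the witness, then go up), but it mislocates the hard step and in doing so leaves the actual gap unfilled.

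First, the finite-to-finite interpolation that you flag as ``the genuinely hard part'' is not where Ramsey is needed; in fact it is routine. You do not obtain a model with prescribed finite $S$-sizes by taking an elementary substructure of an infinite $\C$ --- an elementary substructure of a structure with $|\s^{\C}|\ge\aleph_0$ cannot have $\s$ finite, so your Skolem-hull-of-finitely-many-points idea cannot produce finite sorts. The correct move is to apply clause (ii) of strong finite witnessability a \emph{second} time: extend the arrangement $\delta_V$ to $\delta_{V\cup W}$ by adding $\kappa(\s)-|\s^{\A'}|$ fresh variables per sort, declared pairwise distinct; stable infiniteness guarantees $\wit(\varphi)\land\delta_{V\cup W}$ is $\T$-satisfiable, and then clause (ii) gives a model whose $\s$-domain is exactly the set of values of the variables, i.e.\ of size $\kappa(\s)$. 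No Ramsey, no Skolemization.

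The real difficulty is the \emph{infinite} case, and your ``add $\kappa(\s)$ constants, compactness, then trim'' sketch does not survive the spillover issue you yourself name. If $\kappa(\s_1)<\kappa(\s_2)$ and there is any term of sort $\s_1$ taking arguments of sort $\s_2$, then the Skolem hull of the $\kappa(\s_2)$-many constants already forces $|\s_1|$ up to $\kappa(\s_2)$; no downward L\"owenheim--Skolem step can repair this independently sort-by-sort (cf.\ \Cref{Failure of LST}). The paper's fix is to build the spillover control \emph{into} the compactness step: besides the distinctness axioms $\Gamma_1$, one adds a set $\Gamma_2$ asserting that every $\Sigma^*$-term of sort $\s_i$ takes the same value on tuples of higher-sort constants that are $\sim$-equivalent (same order type). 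Ramsey's theorem --- in the multi-function ordered form of \Cref{lem-mult-ramsey-directed} --- is used to show each finite subset of $\Gamma_2$ is satisfiable, by taking a finitely-smooth model large enough that a monochromatic set of constants exists. Only after compactness yields a model $\B\vDash\Gamma$ does one take a Skolem hull; $\Gamma_2$ then guarantees that terms applied to the large-sort constants produce only boundedly many values in the smaller sorts, so the hull has exactly the prescribed cardinalities. Thus Ramsey governs the upward step, not the finite one, and it acts not by making a subset ``close up'' but by forcing enough term-collapsing to bound the hull.
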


For our proof, we define a weaker variant of smoothness, that focuses the requirements only for finite cardinals.

\begin{definition}\label{finite smoothness}
    A $\Sigma$-theory $\T$ is \emph{finitely smooth} with respect to $S \subseteq \S_{\Sigma}$ if for every quantifier-free formula $\varphi$, $\T$-interpretation $\A$ with $\A\vDash\varphi$, and function $\kappa$ from $\Saa{<}\cap S$ to the class of cardinals with $|\s^{\A}| \leq \kappa(\s) < \aleph_0$ for every $\s \in \Saa{<}\cap S$, there is a $\T$-interpretation $\B$ with $\B\vDash\varphi$ with $|\s^{\B}|=\kappa(\s)$ for every $\s \in \Saa{<}\cap S$.
\end{definition}

We make use of the following two lemmas.

\begin{restatable}{lemma}{lemseventhree}\label{lem-73}
    If $\T$ is stably infinite and strongly finitely witnessable, both with respect to some set of sorts $S \subseteq \S_\Sigma$, then $\T$ is finitely smooth with respect to $S$.
\end{restatable}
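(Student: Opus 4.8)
## Proof proposal for Lemma~\ref{lem-73}

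The plan is to start from a quantifier-free $\varphi$, a $\T$-interpretation $\A \vDash \varphi$, and a target cardinal function $\kappa \colon \Saa{<} \cap S \to \text{Card}$ with $|\s^\A| \le \kappa(\s) < \aleph_0$ for each relevant sort, and to produce a $\T$-interpretation $\B \vDash \varphi$ hitting those finite cardinalities exactly. First I would apply strong finite witnessability: replacing $\varphi$ by $\wit(\varphi)$ and using property (i), I may assume $\A \vDash \wit(\varphi)$ (extending the assignment of $\A$ to the new witness variables $\overarrow{w}$, which is possible since $\A \vDash \Exists{\overarrow{w}} \wit(\varphi)$). Next, from $\A$ read off, for each sort $\s \in S$, the equivalence relation $E_\s$ that the interpretation $\A$ induces on $\vars_\s(\wit(\varphi))$ via equality, forming the arrangement $\delta_V$ on $V = \bigcup_{\s \in S}\vars_\s(\wit(\varphi))$; then $\A \vDash \wit(\varphi) \wedge \delta_V$, so by property (ii) there is a $\T$-interpretation $\A'$ satisfying $\wit(\varphi)\wedge\delta_V$ with $\s^{\A'} = \vars_\s(\wit(\varphi)\wedge\delta_V)^{\A'}$ for every $\s \in S$ — i.e. every element of each sort in $S$ is named by a variable.

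The key observation is that for such a ``finitely witnessed'' interpretation, each sort $\s \in S$ has cardinality equal to the number of $E_\s$-classes, and more importantly we have complete freedom to \emph{enlarge} a finite sort by using a \emph{coarser} arrangement — or rather, the right move is to pad with fresh variables. Concretely, to raise sort $\s \in \Saa{<} \cap S$ from $|\s^{\A'}|$ up to $\kappa(\s)$, I would add $\kappa(\s) - |\s^{\A'}|$ fresh variables $z^\s_1, \dots$ of sort $\s$ to $V$, extend the arrangement $\delta_V$ to a larger arrangement $\delta_{V'}$ that puts each new $z^\s_j$ in its own singleton equivalence class (distinct from all old classes and from each other), and leaves the classes on the other sorts untouched. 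Since $\kappa(\s) \ge |\s^{\A'}| = |\s^\A|$, there are enough fresh classes; and since $\wit(\varphi)$ does not mention the $z^\s_j$, satisfiability is preserved: from $\A'$ build a model of $\wit(\varphi) \wedge \delta_{V'}$ by interpreting the $z^\s_j$ as new distinct elements freely adjoined to $\s^{\A'}$ for $\s \in \Saa{<}\cap S$ (this uses that these target cardinals are finite but $\geq$ the current ones; for sorts in $\Saa{\ge}$ or $S \setminus \Saa{<}$ nothing is required). Then $\wit(\varphi)\wedge\delta_{V'}$ is $\T$-satisfiable, so property (ii) hands back a $\T$-interpretation $\B$ satisfying it with $\s^\B = \vars_\s(\wit(\varphi)\wedge\delta_{V'})^\B$ for $\s\in S$; the arrangement then forces $|\s^\B| = \kappa(\s)$ for each $\s \in \Saa{<}\cap S$. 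Finally, $\B \vDash \wit(\varphi)$ gives $\B \vDash \Exists{\overarrow{w}}\wit(\varphi)$, hence $\B\vDash\varphi$ by property (i) (after restricting the assignment back to $\vars(\varphi)$).

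The main obstacle — and the place requiring care — is the claim that $\wit(\varphi)\wedge\delta_{V'}$ is genuinely $\T$-satisfiable after adding the fresh variables and singleton classes. Adjoining new elements to a sort of an existing $\T$-model is not automatically a $\T$-model (the new elements must be in the range of the function symbols appropriately, predicates must be defined on them, and $\T$'s axioms must still hold). This is exactly where stable infiniteness is needed, or rather where one must be more clever: the honest route is probably \emph{not} to adjoin elements to $\A'$ directly, but to note that since $\T$ is stably infinite w.r.t. $S$, from $\varphi$ we first get a $\T$-interpretation $\C \vDash \varphi$ with $|\s^\C| \ge \aleph_0$ for all $\s \in S$; then the finitely-witnessed interpretation obtained from $\C$ via (i)+(ii) can be restricted/quotiented down, and by choosing the arrangement on the new larger variable set inside $\C$ one can carve out exactly $\kappa(\s)$ classes in each finite-target sort while there is an infinite supply of elements to realize them. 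So the real structure is: use stable infiniteness to get infinitely many elements in each sort of $S$ to play with, then use strong finite witnessability's arrangement mechanism to collapse each sort in $\Saa{<}\cap S$ down to precisely $\kappa(\s)$ named elements, leaving the (already infinite) sorts in $\Saa{\ge}$ alone. I expect verifying that the arrangement can be chosen to realize the exact target cardinalities simultaneously across all sorts — and that the resulting witnessed interpretation still satisfies $\varphi$ — to be the crux of the argument.
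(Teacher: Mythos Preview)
Your overall plan in the first two paragraphs matches the paper's approach: extend $\A$'s assignment so that $\A \vDash \wit(\varphi)$, read off the arrangement $\delta_V$ from $\A$, apply property (ii) to get a finitely witnessed $\A'$, pad with $\kappa(\s) - |\s^{\A'}|$ fresh variables per sort to form $\delta_{V'}$, and re-apply (ii). One minor correction: you write $|\s^{\A'}| = |\s^\A|$, but this should be $|\s^{\A'}| \le |\s^\A|$ (the paper proves it via the injective map $x^{\A'} \mapsto x^\A$), which is what you actually need.

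You correctly identify that the delicate step is showing $\wit(\varphi)\wedge\delta_{V'}$ is $\T$-satisfiable, and that stable infiniteness is the missing ingredient. But your proposed repair in the third paragraph---applying stable infiniteness to $\varphi$ to obtain a fresh $\C$ with infinite $S$-domains and then ``carving out'' exactly $\kappa(\s)$ classes from $\C$---does not work as stated. If you restart from $\C$ and read the arrangement from $\C$ (or from $\C$ after extending its assignment to satisfy $\wit(\varphi)$), you lose the bound coming from $\A$: the arrangement induced by $\C$ may have up to $|\vars_\s(\wit(\varphi))|$ equivalence classes in sort $\s$, which can exceed $\kappa(\s)$, and the witnessing mechanism offers no way to \emph{decrease} the number of classes. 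The paper's fix is simpler and stays inside your original setup: observe that $\wit(\varphi)\wedge\delta_V$ is itself a quantifier-free $\T$-satisfiable formula (witnessed by $\A$), so apply stable infiniteness to \emph{that} formula rather than to $\varphi$. This produces a $\T$-interpretation with infinite $S$-domains that already satisfies $\wit(\varphi)\wedge\delta_V$, in which the finitely many fresh padding variables can be assigned to new pairwise distinct elements, establishing $\T$-satisfiability of $\wit(\varphi)\wedge\delta_{V'}$. A second application of (ii) then yields the desired $\B$.
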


\begin{lemma}[{\cite[Theorem~3]{BarTolZoh2}}] \label{lem-fin-model}
    If $\T$ is strongly finitely witnessable with respect to some set of sorts $S \subseteq \S_\Sigma$, then $\T$ is stably finite with respect to $S$.
\end{lemma}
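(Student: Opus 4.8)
The plan is to read stable finiteness off the two clauses defining strong finite witnessability: using the witness function $\wit$ together with a carefully chosen arrangement, any $\T$-model of a quantifier-free formula can be replaced by one in which every sort of $S$ consists of exactly the (finitely many) interpretations of the witness variables. All of the real content lives in clause (ii); the rest is bookkeeping around clause (i).

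First I would fix a quantifier-free formula $\varphi$ and a $\T$-interpretation $\A = (\sA,\nu)$ with $\A \vDash \varphi$. By clause (i), $\varphi$ is $\T$-equivalent to $\Exists{\overarrow{w}} \wit(\varphi)$ with $\overarrow{w} = \vars(\wit(\varphi)) \setminus \vars(\varphi)$, so $\A \vDash \Exists{\overarrow{w}} \wit(\varphi)$; I would extend $\nu$ to $\nu'$, agreeing with $\nu$ off $\overarrow{w}$, so that $\A' := (\sA,\nu') \vDash \wit(\varphi)$. Since $\A'$ keeps the structure $\sA$, we have $\s^{\A'} = \s^{\A}$ for every sort $\s$.

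Next I would record the equalities realized by $\A'$ among the witness variables as an arrangement. For each $\s \in S$, let $V_\s := \vars_\s(\wit(\varphi))$, adding a single fresh variable $z_\s$ of sort $\s$ only in the degenerate case $\vars_\s(\wit(\varphi)) = \emptyset$; let $E_\s$ be the equivalence relation on $V_\s$ with $x\,E_\s\,y$ iff $x^{\A'} = y^{\A'}$, putting $z_\s$ in a class by itself. Let $\delta_V$ be the arrangement on $V := \bigcup_{\s\in S} V_\s$ induced by $\bigcup_{\s\in S} E_\s$; as $S$ is finite and $\wit(\varphi)$ has finitely many variables, $V$ is finite, and $\A' \vDash \wit(\varphi) \land \delta_V$, so this formula is $\T$-satisfiable. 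Clause (ii) then yields a $\T$-interpretation $\B$ with $\B \vDash \wit(\varphi) \land \delta_V$ and $\s^{\B} = \vars_\s(\wit(\varphi) \land \delta_V)^{\B} = V_\s^{\B}$ for each $\s \in S$. Because $V_\s$ is finite, $|\s^{\B}| < \aleph_0$; and because $\B \vDash \delta_V$, the elements of $V_\s$ realize in $\B$ exactly as many values as $E_\s$ has classes, namely $|\vars_\s(\wit(\varphi))^{\A'}| \le |\s^{\A'}| = |\s^{\A}|$ in the non-degenerate case and $1 \le |\s^{\A}|$ in the degenerate one; either way $|\s^{\B}| \le |\s^{\A}|$. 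Finally $\B \vDash \wit(\varphi)$ gives $\B \vDash \Exists{\overarrow{w}} \wit(\varphi)$, hence $\B \vDash \varphi$ by clause (i), so $\B$ witnesses stable finiteness with respect to $S$.

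There is no deep obstacle; the proof is essentially immediate from clause (ii). The one step I would expect to cause trouble is the degenerate case in which $\wit(\varphi)$ has no variable of some $\s \in S$: then $\vars_\s(\wit(\varphi) \land \delta_V)^{\B}$ would have to be empty, contradicting nonemptiness of $\s^{\B}$, which is why I would pad $V_\s$ with the fresh variable $z_\s$. The only other care needed is routine bookkeeping: tracking the ``new'' variables $\overarrow{w}$ when moving from $\varphi$ to $\wit(\varphi)$ and back through clause (i), and checking that passing to $\wit(\varphi) \land \delta_V$ constrains no sort outside $S$ (the arrangement mentions only sorts of $S$, and stable finiteness concerns only sorts of $S$).
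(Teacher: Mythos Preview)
Your argument is correct. The paper itself does not prove this lemma; it simply cites \cite[Theorem~3]{BarTolZoh2}. That said, the first half of the paper's appendix proof of \Cref{lem-73} carries out precisely your construction (pass to $\wit(\varphi)$ via clause (i), take the arrangement $\delta_V$ induced by the resulting interpretation, and invoke clause (ii) to obtain a model whose $S$-sorts are exactly the interpretations of the witness variables, then observe that the map $x^{\B}\mapsto x^{\A'}$ is well-defined and injective so $|\s^{\B}|\le|\s^{\A}|$). So your route is the intended one.

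One remark on the degenerate case you flag: if $\T$ is strongly finitely witnessable and some $\T$-satisfiable $\varphi$ had $\vars_\s(\wit(\varphi))=\emptyset$ for a sort $\s\in S$, then applying clause (ii) to the arrangement on $V=\vars(\wit(\varphi))$ would already force $\s^{\B}=\emptyset$, contradicting nonemptiness of domains. Thus any $\wit$ witnessing strong finite witnessability must produce at least one variable of each sort in $S$ on every $\T$-satisfiable input, and your padding by $z_\s$ is never actually needed---though it does no harm.
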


In light of the above two lemmas, the following theorem implies \Cref{thm-no-unicorn}.

\begin{theorem} \label{thm-smoothness-from-finite-smoothness}
    Assume that $\T$ is a $\Sigma$-theory, where $\Sigma$ is countable. If $\T$ is stably finite and finitely smooth, both with respect to some set of sorts $S \subseteq \S_\Sigma$, then $\T$ is smooth with respect to $S$.
\end{theorem}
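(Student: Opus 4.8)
The plan is to take a $\T$-interpretation $\A$ satisfying a quantifier-free formula $\varphi$, together with a target cardinal function $\kappa$ on $S$ with $\kappa(\s) \ge |\s^\A|$, and build a $\T$-interpretation $\B \vDash \varphi$ with $|\s^\B| = \kappa(\s)$ for all $\s \in S$. The two regimes behave very differently. For sorts in $\Saa{<} \cap S$ we may have to increase to a larger \emph{finite} cardinal, which is exactly what finite smoothness delivers; for sorts in $\Saa{\ge} \cap S$ we must increase an already-infinite sort up to an arbitrary (possibly uncountable) cardinal $\kappa(\s)$. The first move is a reduction to a convenient base case: using stable finiteness (\Cref{lem-fin-model}), replace $\A$ by a $\T$-interpretation in which every sort of $\Saa{<} \cap S$ has been shrunk to a finite size $\le$ its original size, so we can cleanly separate the ``finite'' sorts from the ``infinite'' ones, and then use finite smoothness to re-inflate the finite sorts to whatever \emph{finite} values $\kappa$ prescribes (if $\kappa(\s)$ is infinite for some $\s \in \Saa{<}$, we first inflate that sort to countable and thereafter treat it as an infinite sort). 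So the crux reduces to: given $\A \vDash \varphi$ with a distinguished set of sorts already infinite, and $\kappa$ assigning each of them a cardinal $\ge \aleph_0$, produce $\B \vDash \varphi$ realizing those cardinalities — a genuinely upward, many-sorted L\"owenheim--Skolem statement.

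For that core step I would invoke \Cref{builtin} to pass to a countable signature extension $\Sigma^* \supseteq \Sigma$ and theory $\T^* \supseteq \T$ with built-in Skolem functions, expand $\A$ to a $\Sigma^*$-interpretation $\A^*$ (this is possible since $\T^*$ conservatively extends $\T$ in the relevant sense), and then do a Skolem-hull / elementary-extension argument. The standard upward trick is to add, for each sort $\s$ with target $\kappa(\s)$, a set $C_\s$ of $\kappa(\s)$ fresh constants of sort $\s$ together with the diagram of $\A^*$ and axioms $c \ne c'$ for distinct $c,c' \in C_\s$; compactness (\Cref{compact}) gives a model $\M$ of this extended theory, in which $|\s^\M| \ge \kappa(\s)$. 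To push the size \emph{down} to exactly $\kappa(\s)$, take a Skolem hull inside $\M$: the substructure generated by the $C_\s$'s together with (names for) the elements of $\A^*$, closed under the built-in Skolem functions; by \Cref{TV test} this is an elementary substructure of $\M$, it still satisfies $\varphi$ (which is quantifier-free and whose witnesses live in the assignment, carried along), and a cardinality count — using that the built-in Skolem functions are countably many and each sort's generators number exactly $\kappa(\s) \ge \aleph_0$ — shows each such sort has size exactly $\kappa(\s)$. Finally restrict back to $\Sigma$ to get the desired $\T$-interpretation $\B$.

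The main obstacle I anticipate is \textbf{controlling the cardinalities of sorts not in $S$}, and more subtly, ensuring that a Skolem hull in the many-sorted setting does not accidentally blow up (or shrink) a sort we care about: a Skolem function with codomain sort $\s$ but arguments in other sorts can force $|\s^\B|$ to be as large as the product of the generator-counts of those argument sorts, so the bookkeeping must confirm this product never exceeds the intended $\kappa(\s)$ (true when all relevant cardinals are infinite and the signature is countable, but it requires care, and it is exactly why the reduction to the ``all target sorts infinite'' case matters). A second delicate point is the interaction between the finite sorts (whose sizes we pin down first via finite smoothness, and which must then be \emph{preserved} by the subsequent Skolem-hull construction — so those finitely many elements should be included among the generators and one must check no Skolem function maps into a finite sort in a way that would enlarge it, which again follows because finitely many generators closed under countably many functions stay finite only if... — actually this needs the finite sorts to be closed, so one instead argues that the elementary substructure has the same finite sorts as $\M$ since finiteness of a sort is first-order expressible once we know an upper bound). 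Handling that expressibility-of-finiteness subtlety cleanly, and merging the two constructions (finite-sort adjustment, then infinite-sort inflation) into a single coherent interpretation, is where the real work lies.
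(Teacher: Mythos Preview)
Your proposal has a genuine gap at exactly the point you flag as ``the main obstacle,'' and you resolve it incorrectly. You assert that the Skolem-hull cardinality count works out ``when all relevant cardinals are infinite and the signature is countable,'' but this is false whenever the target cardinals differ. Take sorts $\s_1,\s_2$, a function symbol $f$ of arity $(\s_2,\s_1)$, and targets $\kappa(\s_1)=\aleph_0$, $\kappa(\s_2)=\aleph_1$. Any Skolem hull containing $\aleph_1$ generators of sort $\s_2$ contains the elements $\{f^{\M}(c): c\text{ a $\s_2$-generator}\}$ in sort $\s_1$; nothing in your compactness step (which only adds distinctness axioms for the new constants) forces $f$ to collapse these, so the hull generically has $|\s_1|=\aleph_1$, not $\aleph_0$. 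More tellingly, your argument for the infinite sorts never invokes stable finiteness or finite smoothness at all, so if it were valid it would show that \emph{any} countable-signature interpretation admits independent control of its infinite sort cardinalities---which \Cref{Failure of LST} refutes outright.

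The paper's missing ingredient is this: alongside the distinctness axioms $\Gamma_1$, one adds a schema $\Gamma_2$ asserting that every $\Sigma^*$-term of sort $\s_i$ is, when evaluated on the new constants, insensitive (up to the order-type relation $\sim$) to its arguments coming from sorts $\s_j$ with $\kappa(\s_j)>\kappa(\s_i)$. That is precisely what caps the spillover: once $\Gamma_2$ holds in $\M$, the Skolem hull has at most $\aleph_0\cdot\kappa(\s_i)\cdot(\text{finitely many $\sim$-classes})=\kappa(\s_i)$ elements of sort $\s_i$. The real work is showing $\Gamma_2$ is finitely satisfiable, and \emph{this} is where the hypotheses earn their keep: stable finiteness plus finite smoothness (\Cref{lem-big-model}) yield finite $\T$-interpretations with $|\s_j|$ enormous relative to $|\s_i|$, and a generalized Ramsey theorem (\Cref{lem-mult-ramsey-directed}) then extracts a large set of $\s_j$-elements on which every relevant term is constant per $\sim$-class, witnessing the finite fragment of $\Gamma_2$. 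Your sketch also leaves the finite-target sorts dangling (finite smoothness cannot ``inflate to countable,'' and preserving a finite sort through an elementary extension needs an explicit axiom like the paper's $\Gamma_3$), but the decisive omission is the Ramsey-based spillover control.
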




The remainder of this section is thus dedicated to the proof of \Cref{thm-smoothness-from-finite-smoothness}.

\subsection{Motivating the proof}

 In this section, we illustrate the proof technique with a simple example. The goal is to motivate the proof of \Cref{thm-smoothness-from-finite-smoothness} before delving into the details.

Suppose $\T$ is a $\Sigma$-theory, where $\S_{\Sigma} = \{\s_1, \s_2\}$, $\F_{\Sigma} = \{f\}$, $f$ has arity $(\s_2, \s_1)$, and the only predicate symbols are equalities. Suppose that $\T$ is also stably finite and finitely smooth, both with respect to $S = \S_{\Sigma}$. Let $\varphi$ be a $\T$-satisfiable quantifier-free formula and $\A$ a $\T$-interpretation satisfying $\varphi$. Let $\kappa$ be a function from $S$ to the class of cardinals such that $\kappa(\s) \ge |\s^\A|$ for both $\s \in S$. For concreteness, suppose $|\s_1^\A| = |\s_2^\A| = 10$, $\kappa(\s_1) = \aleph_0$, and $\kappa(\s_2) = \aleph_1$. Our goal is to show that there is a $\T$-interpretation $\B^-$ satisfying $\varphi$ with $|\s_1^{\B^-}| = \aleph_0$ and $|\s_2^{\B^-}| = \aleph_1$.\footnote{The reason for the $-$ superscript in $\B^-$ will be clear presently.}

A natural thought is to apply some variant of the upward L{\"o}wenheim--Skolem theorem, but this doesn't quite work. As will be seen in \Cref{LowenheimSkolem}, generalizations of the L{\"o}wenheim--Skolem theorem to many-sorted logic do not let us control the cardinalities of $\s_1$ and $\s_2$ independently. Nevertheless, let us emulate the standard proof technique for the upward L{\"o}wenheim--Skolem theorem.

Here is the most natural way of generalizing the proof of the upward L{\"o}wenheim--Skolem theorem to our setting. For simplicity, assume that $\T$ already has built-in Skolem functions. We introduce $\aleph_0$ new constants $\{c_{1,\alpha}\}_{\alpha < \omega}$ and $\aleph_1$ new constants $\{c_{2,\alpha}\}_{\alpha < \omega_1}$. We define a set of formulas $\Gamma = \{\varphi\} \cup \Gamma_1$, where
\[
    \Gamma_1 = \{\neg (c_{i,\alpha} = c_{i,\beta}) : i \in \{1,2\};\: \alpha, \beta < \kappa(\s_i);\: \alpha \neq \beta\}.
\]
By \Cref{compact} and finite smoothness, there is a $\T$-interpretation $\B$ satisfying $\Gamma$: indeed, were that not true, \Cref{compact} would guarantee that some finite subset of $\Gamma$ is unsatisfiable; yet such a set would only demand the existence of finitely many new elements, which can be achieved by making use of finite smoothness. 
Since $\B \vDash \Gamma_1$, we have $|\s_1^\B| \ge \aleph_0$ and $|\s_2^\B| \ge \aleph_1$.

Since $\B$ may be too large, we construct a subinterpretation $\B^-$ with
\begin{align*}
    \s_1^{\B^-} &= \{c_{1,\alpha}^\B\}_{\alpha < \omega} \cup \{f^\B(c_{2,\alpha}^\B)\}_{\alpha < \omega_1} \\
    \s_2^{\B^-} &= \{c_{2,\alpha}^\B\}_{\alpha < \omega_1}.
\end{align*}
And using the assumption that $\T$ has built-in Skolem functions, we can prove that $\B^-$ is an elementary subinterpretation of $\B$, so $\B^- \vDash \Gamma$; we can then prove that $|\s_2^{\B^-}| = \aleph_1$, but we unfortunately cannot guarantee that $|\s_1^{\B^-}| = \aleph_0$. This is because $\B^-$ has not only the $\aleph_1$ elements $\{c_{2,\alpha}^\B\}_{\alpha < \omega_1}$ of sort $\s_2$, but also the elements $\{f^\B(c_{2,\alpha}^\B)\}_{\alpha < \omega_1}$ of sort $\s_1$. The function symbol $f$ has created a ``spillover'' of elements from $\s_2$ to $\s_1$.

To fix this, we need to ensure that $|\{f^\B(c_{2,\alpha}^\B)\}_{\alpha < \omega_1}| \le \aleph_0$. To that end, define $\Gamma$ to instead be $\{\varphi\} \cup \Gamma_1 \cup \Gamma_2$, where
\[
    \Gamma_2 = \{f(b) = f(d) : b,d \in \{c_{2,\alpha}\}_{\alpha < \omega_1}\}.
\]
Then, if there is a model $\B$ satisfying $\Gamma$, we have $|\{f^\B(c_{2,\alpha}^\B)\}_{\alpha < \omega_1}| = 1 \le \aleph_0$. To show $\Gamma$ is $\T$-satisfiable, it suffices by the compactness theorem to show that $\T \cup \Gamma'$ is satisfiable for every finite subset $\Gamma' \subseteq \Gamma$. So let $\Gamma'_1 \subseteq \Gamma_1$ and $\Gamma'_2 \subseteq \Gamma_2$ be finite subsets. We will construct a $\T$-interpretation $\B'$ such that $\B' \vDash \{\varphi\} \cup \Gamma'_1 \cup \Gamma'_2$. For concreteness, suppose that $\{c_{1,0}, c_{1,1}, \dots, c_{1,99}\}$ and $\{c_{2,0}, c_{2,1}, \dots, c_{2,9}\}$ are the new constants that appear in $\Gamma'_1 \cup \Gamma'_2$. By finite smoothness, there is a $\T$-interpretation $\B'$ satisfying $\varphi$ such that $|\s_1^{\B'}| = 100$ and $|\s_2^{\B'}| = 901$. By the pigeonhole principle, there is a subset $Y \subseteq \s_2^{\B'}$ with $|Y| \ge 10$ such that $f^{\B'}$ is constant on $Y$; if 901 pigeons are put in 100 holes, then some hole has at least 10 pigeons (although this is not true for 900 pigeons). Then, $\B'$ can interpret the constants $\{c_{1,0}, c_{1,1}, \dots, c_{1,99}\}$ as distinct elements of $\s_1^{\B'}$ and the constants $\{c_{2,0}, c_{2,1}, \dots, c_{2,9}\}$ as distinct elements of $Y$. This proves that $\Gamma$ is $\T$-satisfiable.

\makeatletter
\newcommand*\bigcdot{\mathpalette\bigcdot@{.7}}
\newcommand*\bigcdot@[2]{\mathbin{\vcenter{\hbox{\scalebox{#2}{$\m@th#1\bullet$}}}}}
\makeatother

\begin{wrapfigure}{r}{.4\textwidth}
\centering
\begin{tikzpicture}[scale=0.7, every mark/.append style={draw=white}, mark size=2.4pt]
\begin{axis}[
         xmin=-0.2, xmax=1.8,
    ymin=-0.2, ymax=1.8,
    axis lines=center,
    axis on top=true,
    domain=0:1,
    xtick={0.1, 0.5, 1.0, 1.2, 1.4},
    xticklabels={$1$,$10$,$\aleph_{0}$,$\aleph_{1}$,$\aleph_{2}$},
    extra x ticks={0.3, 0.75, 1.6},
    extra x tick style={tick style={draw=none}},
    extra x tick labels={$\cdots$, $\cdots$, $\cdots$},
    ytick={0.1, 0.5, 1.0, 1.2, 1.4},
    yticklabels={$1$,$10$,$\aleph_{0}$,$\aleph_{1}$,$\aleph_{2}$},
    extra y ticks={0.3, 0.75, 1.6},
    extra y tick style={tick style={draw=none}},
    extra y tick labels={$\vdots$, $\vdots$, $\vdots$}
    ]
    \addplot[mark=none, black, dotted] coordinates {(0.5,0) (0.5,1.7)};
    \addplot[mark=none, black, dotted] coordinates {(0,0.5) (1.7,0.5)};
    \addplot[mark=none, black, thick, dotted] coordinates {(1.0,0) (1.0,1.7)};
    \addplot[mark=none, black, thick, dotted] coordinates {(0,1.0) (1.7,1.0)};
    \addplot[mark=none, black, dotted] coordinates {(1.2,0) (1.2,1.7)};
    \addplot[mark=none, black, dotted] coordinates {(0,1.2) (1.7,1.2)};
    \addplot[mark=none, black, dotted] coordinates {(1.4,0) (1.4,1.7)};
    \addplot[mark=none, black, dotted] coordinates {(0,1.4) (1.7,1.4)};
    \node at (axis cs:-0.1, 1.75){$\s_{2}$};
    \node at (axis cs: 1.75, -0.1){$\s_{1}$};
    \node at (axis cs: 1.2, 1.6){$\color{red}\substack{\bigcdot\\\bigcdot\\\bigcdot}$};
    \node at (axis cs: 1.6, 1.3){$\color{red}\bigcdot\bigcdot\bigcdot$};
    \node at (axis cs:0.4,0.4){$\A$};
    \node at (axis cs: 1.6,1.6){$\color{red}\B$};
    \node at (axis cs:0.9,1.1){$\color{red}\B^{-}$};
    \path[->,every node/.style={font=\sffamily\small}]
    (axis cs:0.5,0.5) edge[bend left] node [right] {} (axis cs:1.1,1.6)
    (axis cs:1.6,1.15) edge[bend left] node [right] {} (axis cs:1.05,1.15);
    \addplot[only marks] 
table {
 0.5 0.5
};
\addplot[only marks, mark=*,mark options={red,scale=1.7}]
table {
 1.0 1.2
};
\addplot[only marks, mark=*,mark options={red}] 
table {
 1.0 1.4
 1.2 1.2
 1.2 1.4
 1.4 1.2
 1.4 1.4
};
\end{axis}
\end{tikzpicture}
\caption{How we move from interpretation to interpretation}\label{proof flow}
\vspace{-3mm}
\end{wrapfigure}
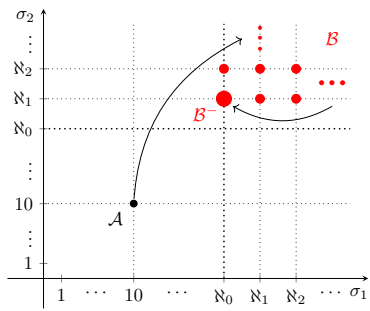

We illustrate the top level structure of the proof idea in \Cref{proof flow}, applied
to the working example.
The $x$ axis represents cardinalities of 
interpretations of $\sigma_1$,
and the $y$ axis does the same for $\sigma_2$.
Starting from the interpretation $\A$ with $|\s_{1}^{\A}|=|\s_{2}^{\A}|=10$, we construct some interpretation $\B$, represented by the array of red dots as there is some degree of uncertainty regarding the precise cardinalities of its domains, with $|\s_{1}^{\B}|\geq\aleph_{0}$ and $|\s_{2}^{\B}|\geq\aleph_{1}$. From $\B$ we hope to construct $\B^{-}$, which has $|\s_{1}^{\B^{-}}|=\aleph_{0}$ and $|\s_{2}^{\B^{-}}|=\aleph_{1}$: the latter can be achieved 
using techniques similar 
to the many-sorted L{\"o}wenheim-Skolem theorems (see \Cref{LowenheimSkolem} below), while the former requires the aforementioned pigeonhole principle arguments.

The above proof sketch illustrates the main ideas behind the proof of \Cref{thm-smoothness-from-finite-smoothness}. The generalization to more sorts and function symbols requires some extra bookkeeping. More interestingly, the generalization to functions of arity greater than one requires a version of Ramsey's theorem, which is a generalization of the pigeonhole principle.

\subsection{Ramsey's theorem and generalizations}

In this section, we state Ramsey's theorem and a generalization of it.

Ramsey's theorem is sometimes stated in terms of coloring the edges of hypergraphs, but for our purposes it is more convenient to state it as follows. In the following lemma, the notations $\nset{X}{n}$ and $[k]$ are defined as in Section~\ref{sec-notation}.

\begin{lemma}[Ramsey's theorem {\cite[Theorem~B]{ramsey1929}}] \label{lem-ramsey}
    For any $k,n,\allowbreak m \in \mathbb{N}$, there is an $R(k,n,m) \in \mathbb{N}$ such that for any set $X$ with $|X| \ge R(k,n,m)$ and function $f : \nset{X}{n} \to [k]$, there is a subset $Y \subseteq X$ with $|Y| \ge m$ such that $f$ is constant on $\nset{Y}{n}$.
\end{lemma}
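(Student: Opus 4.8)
\textbf{Proof proposal for Ramsey's theorem (Lemma~\ref{lem-ramsey}).}

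The plan is to prove the statement by induction on $n$, the size of the subsets being colored, following the classical approach. The base case $n = 0$ is trivial (or one can take $n=1$ as base case: here $\nset{X}{1}$ is essentially $X$ itself and $f$ is a $k$-coloring of the points of $X$, so by pigeonhole, if $|X| \ge k(m-1)+1$ then some color class has size at least $m$; thus $R(k,1,m) = k(m-1)+1$ works). I would also note that the cases $k = 0$, $k=1$, $m \le 1$, or $n = 0$ are degenerate and handled directly.

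For the inductive step, assume $R(k, n-1, m')$ exists for all $k$ and all $m'$, and fix $k, m$. The standard construction is: starting from a large finite set $X$, build a decreasing sequence of subsets $X = X_0 \supseteq X_1 \supseteq \cdots$ together with a sequence of distinguished elements $a_i \in X_i$ as follows. Having chosen $X_i$ and $a_i \in X_i$, look at how $f$ colors the $n$-subsets of $X_i$ that contain $a_i$; this induces a coloring $g_i : \nset{X_i \setminus \{a_i\}}{n-1} \to [k]$ by $g_i(Z) = f(Z \cup \{a_i\})$. By the inductive hypothesis, if $X_i \setminus \{a_i\}$ is large enough, there is a subset $X_{i+1} \subseteq X_i \setminus \{a_i\}$ of a prescribed large size on which $g_i$ is constant, say with value $c_i \in [k]$; pick any $a_{i+1} \in X_{i+1}$. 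Carry this out for enough steps, say producing $a_1, \dots, a_N$ with $N$ to be determined, where the required sizes are computed backwards via the inductive Ramsey numbers. Now each $a_i$ carries a color $c_i \in [k]$, so by pigeonhole, if $N \ge k(m-1)+1$, some color $c$ is attained by at least $m$ of the $a_i$; let $Y$ be that set of $a_i$'s. Then for any $n$-subset of $Y$, if $a_i$ is its least-indexed element, the subset is $\{a_i\} \cup Z$ with $Z \subseteq X_{i+1} \setminus \{a_i\}$, wait—more precisely $Z$ consists of later $a_j$'s, which all lie in $X_{i+1}$, so $f$ of this subset equals $g_i(Z) = c_i = c$. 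Hence $f$ is constant on $\nset{Y}{n}$, and $|Y| \ge m$. Tracking the sizes needed at each stage gives an explicit (if rapidly growing) value for $R(k,n,m)$.

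The main obstacle, or rather the only real subtlety, is the bookkeeping of the sizes: one must choose $N$ first (as $k(m-1)+1$), then work backwards to determine how large $X_{N-1}, X_{N-2}, \dots, X_0$ must be, using $|X_{i}| \ge R(k, n-1, |X_{i+1}|) + 1$ at each step, and finally set $R(k,n,m)$ to be the resulting size of $X_0$. One should double-check that a set of size $m$ suffices at the top and that the "+1" for removing $a_i$ is accounted for. Since the paper only needs \emph{existence} of $R(k,n,m)$ and not any particular bound, this is routine; indeed, since this is a classical theorem, I would expect the authors to simply cite it (as they do, attributing it to Ramsey's original paper) rather than reprove it.
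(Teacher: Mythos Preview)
Your proof is the standard and correct inductive argument for the finite Ramsey theorem, and your final remark is exactly right: the paper does not prove this lemma at all but simply cites Ramsey's original 1929 paper. There is nothing to compare against, and your sketch would serve perfectly well if a proof were required.
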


Note that in Ramsey's theorem, the set $[k]$ can be replaced by any set of cardinality $k$.

We want to generalize Ramsey's theorem to functions $f : X^n \to [k]$. The most natural generalization would state that there is a large subset $Y \subseteq X$ such that $f$ is constant on $Y^n$. But this generalization is false, as the following example shows.

\begin{example}
    Let $X = \mathbb{Z}$, and let $f : X^2 \to [2]$ be given by
    \[
        f(m,n) = \begin{cases}
            1 &\quad\text{if} \ m < n \\
            2 &\quad\text{otherwise}.
        \end{cases}
    \]
    Then, $f(m,n) \neq f(n,m)$ for all $m,n \in X$ with $m \neq n$. Thus, there is no subset $Y \subseteq X$ with $|Y| \ge 2$ such that $f$ is constant on $Y^2$.
\end{example}

To avoid counterexamples like this, our generalization needs to consider the order of the arguments of $f$. This motivates the following definition.

\begin{definition}
    Let $(X, <)$ be a totally ordered set, and let $\overarrow{x} = (x_1, \dots, x_n)$ and $\overarrow{y} = (y_1, \dots, y_n)$ be elements of $X^n$. We write $\overarrow{x} \sim \overarrow{y}$ if for every $1 \le i < j \le n$ we have
    \begin{align*}
        x_i < x_j &\Longleftrightarrow y_i < y_j \quad \text{and} \\
        x_i = x_j &\Longleftrightarrow y_i = y_j.
    \end{align*}
\end{definition}

Observe that $\sim$ is an equivalence relation on $X^n$ with finitely many equivalence classes.\footnote{The number of equivalence classes is given by the ordered Bell numbers (\url{https://oeis.org/A000670}).}

Now we can state our first generalization of Ramsey's theorem.

\begin{restatable}{lemma}{dirramsey} \label{lem-ramsey-directed}
    For any $k,n,m \in \mathbb{N}$, there is an $R^*(k,n,m) \in \mathbb{N}$ such that for any totally ordered set $(X, <)$ with $|X| \ge R^*(k,n,m)$ and function $f : X^n \to [k]$, there is a subset $Y \subseteq X$ with $|Y| \ge m$ such that $f$ is constant on each $\sim$-equivalence class of $Y^n$.
\end{restatable}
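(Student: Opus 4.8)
\textbf{Proof plan for \Cref{lem-ramsey-directed}.}
The plan is to reduce the ``directed'' Ramsey statement to the ordinary (hypergraph) Ramsey theorem, \Cref{lem-ramsey}. The key observation is that a function $f : X^n \to [k]$ on an ordered set, when restricted to tuples whose entries are \emph{distinct and listed in increasing order}, is essentially a function on $\nset{X}{n}$, and the $\sim$-equivalence classes of such ``sorted'' tuples are singletons. More generally, an arbitrary tuple $\overarrow{x} \in X^n$ is determined, up to its $\sim$-class, by (a) a partition of its index positions $[n]$ recording which coordinates are equal (equivalently, a surjection $p$ from $[n]$ onto some $[r]$ with $r \le n$, order-preserving on first occurrences), together with (b) the value of $f$ on the underlying \emph{set} of $r$ distinct entries, taken in increasing order. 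There are only finitely many such partition patterns $p_1, \dots, p_t$ (their count is an ordered Bell number, as the footnote notes), with associated arities $r_1, \dots, r_t \le n$.

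First I would set up the encoding: fix an enumeration $p_1,\dots,p_t$ of the order-patterns on $[n]$, with $p_\ell$ having ``rank'' $r_\ell$. Given $f : X^n \to [k]$ and a set $Z \in \nset{X}{\max_\ell r_\ell}$, I record for each pattern $p_\ell$ the value $f$ takes on the tuple obtained by applying $p_\ell$ to the increasing enumeration of the first $r_\ell$ elements of $Z$ — this defines a coloring $g : \nset{X}{s}{\to} [k]^t$ where $s = \max_\ell r_\ell$ (and $[k]^t$ has cardinality $k^t$, so by the remark after \Cref{lem-ramsey} the ordinary Ramsey theorem still applies). Then I would invoke \Cref{lem-ramsey} with parameters $(k^t, s, m)$ to get $R^*(k,n,m) := R(k^t, s, m)$, so that any ordered $X$ with $|X| \ge R^*(k,n,m)$ admits $Y \subseteq X$, $|Y| \ge m$, on which $g$ is constant on $\nset{Y}{s}$.

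The final step is to check that this $Y$ works: I must show $f$ is constant on each $\sim$-class of $Y^n$. Take $\overarrow{x}, \overarrow{y} \in Y^n$ with $\overarrow{x} \sim \overarrow{y}$. Being $\sim$-equivalent, they have the same order-pattern $p_\ell$ and the same rank $r_\ell$; write $\overarrow{x}$'s distinct entries in increasing order as $a_1 < \dots < a_{r_\ell}$ and $\overarrow{y}$'s as $b_1 < \dots < b_{r_\ell}$. Extend both to $s$-element subsets of $Y$ (possible since $|Y|\ge m$, and one can assume $m \ge s$ without loss of generality, adjusting $R^*$ if needed); constancy of $g$ on $\nset{Y}{s}$ forces the $p_\ell$-coordinate of $g$ to agree on these two subsets, which is exactly $f(\overarrow{x}) = f(\overarrow{y})$. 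The main obstacle — really just a bookkeeping hazard rather than a conceptual one — is making the pattern/encoding formalism precise: cleanly defining the order-patterns, the map sending $(Z,\ell)$ to the appropriate tuple, and verifying that two $\sim$-equivalent tuples genuinely yield the same pattern and reduce to the same $\nset{Y}{s}$-query. Once that dictionary is fixed, the reduction to \Cref{lem-ramsey} is immediate.
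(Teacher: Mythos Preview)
Your approach is essentially the paper's: encode $f$ as a coloring of $n$-element subsets with an enlarged color space (one coordinate per order-pattern), apply ordinary Ramsey, and recover $\sim$-constancy by reading off the right coordinate. The paper does the same thing, except it indexes coordinates by \emph{all} $n^n$ maps $\rho : [n]\to[n]$ rather than just the ordered-Bell-many patterns, and sets $R^*(k,n,m) = R(k^{n^n}, n, m+n-1)$.

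There is, however, a real gap in your padding step, and your proposed fix (assume $m \ge s$) does not address it. When you extend $\{a_1 < \dots < a_{r_\ell}\}$ to an $s$-element subset $Z_x \subseteq Y$, you need the $p_\ell$-coordinate of $g(Z_x)$ to equal $f(\overarrow{x})$; but by your own definition of $g$, that coordinate is computed from the \emph{smallest} $r_\ell$ elements of $Z_x$. So the padding elements must all exceed $a_{r_\ell}$. If $a_{r_\ell}$ is the maximum of $Y$ (e.g., $\overarrow{x}$ is the constant tuple at $\max Y$), no such padding exists in $Y$, regardless of how large $m$ is relative to $s$. The paper handles exactly this: it applies Ramsey to obtain $Y'$ of size $m+n-1$, takes $Y$ to be the smallest $m$ elements of $Y'$, and pads from $Y' \setminus Y$, whose elements are all strictly larger than everything in $Y$. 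With that adjustment your argument is complete.
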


Next, we further generalize Ramsey's theorem to multiple functions $f_1, \dots, f_r$.

\begin{restatable}{lemma}{multramsey} \label{lem-mult-ramsey-directed}
    For any $k,m \in \mathbb{N}$ and $\overarrow{n} = (n_1, \dots, n_r) \in \mathbb{N}^r$, there is a number $R^{**}(k,\overarrow{n},m) \in \mathbb{N}$, such that for any totally ordered set $(X, <)$ with $|X| \ge R^{**}(k,\overarrow{n},m)$ and functions $f_i : X^{n_i} \to [k]$ for $i \in [r]$, there is a subset $Y \subseteq X$ with $|Y| \ge m$, such that $f_i$ is constant on each $\sim$-equivalence class of $Y^{n_i}$ for all $i \in [r]$.
\end{restatable}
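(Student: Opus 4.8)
The plan is to prove Lemma~\ref{lem-mult-ramsey-directed} by induction on the number $r$ of functions, using Lemma~\ref{lem-ramsey-directed} both as the base case and as the engine of the inductive step: we apply directed Ramsey once per function, shrinking the ground set each time. The key observation that makes this go through is that the relation $\sim$ on $X^n$ is \emph{intrinsic}, depending only on the coordinates of a tuple and the ambient order $<$, not on which subset of $X$ we regard the tuple as living in. Consequently, if $f : X^n \to [k]$ is constant on every $\sim$-equivalence class of $Z^n$ and $Z' \subseteq Z$, then $f$ is automatically constant on every $\sim$-equivalence class of $(Z')^n$, since each such class is contained in a $\sim$-equivalence class of $Z^n$. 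In other words, the conclusion for a single function is preserved under passing to subsets.

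Concretely, I would define $R^{**}$ recursively by $R^{**}(k,(n_1),m) = R^*(k,n_1,m)$ and, for $r \ge 2$, $R^{**}(k,(n_1,\dots,n_r),m) = R^*\bigl(k, n_1, R^{**}(k,(n_2,\dots,n_r),m)\bigr)$. The base case $r = 1$ is exactly Lemma~\ref{lem-ramsey-directed}. For the inductive step, assume the statement for $r-1$ functions. Given a totally ordered set $(X,<)$ with $|X| \ge R^{**}(k,(n_1,\dots,n_r),m)$ and functions $f_i : X^{n_i} \to [k]$ for $i \in [r]$, first apply Lemma~\ref{lem-ramsey-directed} to $f_1$ to obtain $Y_1 \subseteq X$ with $|Y_1| \ge R^{**}(k,(n_2,\dots,n_r),m)$ such that $f_1$ is constant on each $\sim$-equivalence class of $Y_1^{n_1}$. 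Then apply the induction hypothesis to the totally ordered set $(Y_1,<)$ and the functions $f_2, \dots, f_r$ (restricted to the appropriate powers of $Y_1$) to obtain $Y \subseteq Y_1$ with $|Y| \ge m$ such that $f_i$ is constant on each $\sim$-equivalence class of $Y^{n_i}$ for every $i \in [2,r]$. Finally, since $Y \subseteq Y_1$, the intrinsicness observation above shows that $f_1$ is still constant on each $\sim$-equivalence class of $Y^{n_1}$, so $Y$ witnesses the full conclusion.

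I do not expect any real obstacle: the argument is a routine ``apply Ramsey once per function and shrink'' induction. The only points requiring (minor) care are verifying the subset-invariance of the property ``$f$ is constant on each $\sim$-class of $Y^{n}$'', which is immediate once one notes that $\sim$ is defined purely from $<$ and the coordinates, and keeping the recursion defining $R^{**}$ straight. A non-inductive alternative would be to merge $f_1,\dots,f_r$ into a single function into $[k]^r$ (a set of size $k^r$, permissible by the remark following Lemma~\ref{lem-ramsey}) and apply Lemma~\ref{lem-ramsey-directed} once; but recovering the per-$f_i$ conclusion from the merged one requires padding $n_i$-tuples up to a common arity in an order-preserving fashion, which is fiddlier than the iterative argument, so the induction is the cleaner route.
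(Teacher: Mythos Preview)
Your proof is correct, but it takes a genuinely different route from the paper's. You argue by induction on $r$, applying Lemma~\ref{lem-ramsey-directed} once per function and shrinking the ground set at each step; the subset-invariance of $\sim$ you isolate is exactly what makes this go through, and your verification of it is fine. The paper instead does precisely the ``non-inductive alternative'' you mention and set aside: it sets $n = n_1 + \cdots + n_r$, merges the $f_i$ into a single map $F : X^n \to [k]^r$ via $F(\overarrow{x_1},\dots,\overarrow{x_r}) = (f_1(\overarrow{x_1}),\dots,f_r(\overarrow{x_r}))$, and applies Lemma~\ref{lem-ramsey-directed} once with $k^r$ colors to obtain $Y'$ with $|Y'|\ge m+1$ on which $F$ is constant on $\sim$-classes. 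To recover the per-$f_i$ conclusion it then removes the maximum element $y'$ of $Y'$, sets $Y = Y'\setminus\{y'\}$, and pads any $\overarrow{x},\overarrow{y}\in Y^{n_i}$ with copies of $y'$ in the other coordinate blocks; since $y'$ is strictly greater than everything in $Y$, the padded $n$-tuples are $\sim$-equivalent whenever $\overarrow{x}\sim\overarrow{y}$, and reading off the $i$-th coordinate of $F$ gives $f_i(\overarrow{x})=f_i(\overarrow{y})$.

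Your iterative argument is conceptually cleaner and sidesteps the padding trick entirely; the paper's single-shot approach yields the closed-form bound $R^{**}(k,\overarrow{n},m)=R^*(k^r,\sum_i n_i,\,m+1)$ rather than an $r$-fold iterate of $R^*$. Both are perfectly valid for the purposes of the paper, which only needs existence of some finite $R^{**}$.
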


\subsection{The proof of \Cref{thm-smoothness-from-finite-smoothness}}

Fix a $\Sigma$-theory $\T$ and a set of sorts $S\subseteq\S_{\Sigma}$. Assume that $\Sigma$ is countable. Suppose that $\T$ is stably finite and finitely smooth, both with respect to $S$. Let $\varphi$ be a $\T$-satisfiable quantifier-free formula and $\A$ a $\T$-interpretation satisfying $\varphi$. Let $\kappa$ be a function from $S$ to the class of cardinals such that $\kappa(\s) \ge |\s^\A|$ for every $\s \in S$.

Write $S = \{\s_1, \s_2, \dots\}$ and, without loss of generality, assume $\kappa(\s_1) \le \kappa(\s_2) \le \cdots$. For notational convenience, we write all $\Sigma$-terms in the form $t(\overarrow{x_1}, \overarrow{x_2}, \dots)$,\footnote{Even if $S$ is infinite, the denoted term is still finite since each term only has a finite number of variables occurring in it.}
where $\overarrow{x_i}$ is a tuple of variables of sort $\s_i$. If $\kappa(\s_i) < \aleph_0$ for all $i$, then we are done by the fact $\T$ is finitely smooth.
Otherwise, let $\ell$ be the largest natural number such that $\kappa(\s_\ell) < \aleph_0$ if there is such a number, and let $\ell = 0$ otherwise.

The proof of \Cref{thm-smoothness-from-finite-smoothness} proceeds in two steps. First, we construct a set of formulas $\Gamma$ such that $\varphi \in \Gamma$ and prove that there is a $\T$-interpretation $\B$ satisfying $\Gamma$. Second, we prove that $\B$ has an elementary subinterpretation $\B^-$ such that $|\s_i^{\B^-}| = \kappa(\s_i)$ for all $i$. Since $\varphi \in \Gamma$, it will follow that $\T$ is smooth.

The assumption that $\T$ is stably finite and finitely smooth is used to construct $\T$-interpretations of the following form, which will be useful for a compactness argument.

\begin{lemma} \label{lem-big-model}
    There is a $\T$-interpretation $\B$ satisfying $\varphi$ such that $|\s_i^\B| = \kappa(\s_i)$ for all $i \le \ell$, and $|\s_i^\B|$ is arbitrarily large but finite for all $i > \ell$.
\end{lemma}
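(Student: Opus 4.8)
\textbf{Proof proposal for \Cref{lem-big-model}.}
The plan is to produce $\B$ from $\A$ in two moves: first \emph{shrink} every sort of $S$ to a finite set, then \emph{inflate} those sorts to exactly the finite cardinalities we want. This is the model-theoretic counterpart of what happened in the motivating example, where we needed a $\T$-interpretation whose ``large'' sorts had been made finite but as big as the current compactness argument required.

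First I would invoke stable finiteness. Since $\A \vDash \varphi$ and $\T$ is stably finite with respect to $S$, there is a $\T$-interpretation $\Ap$ with $\Ap \vDash \varphi$, $|\s^{\Ap}| \le |\s^{\A}|$, and $|\s^{\Ap}| < \aleph_0$ for every $\s \in S$. The purpose of this step is that now \emph{every} sort of $S$ is finite in $\Ap$, so $\S^{\Ap}_{<\aleph_0} \cap S = S$; consequently finite smoothness, when applied to $\Ap$, will be able to prescribe the cardinality of \emph{every} sort in $S$ simultaneously.

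Next, fix an arbitrary target $N \in \mathbb{N}$ (this is the ``arbitrarily large'' parameter hidden in the statement). Define $\kappa' \colon S \to$ cardinals by $\kappa'(\s_i) = \kappa(\s_i)$ for $i \le \ell$ and $\kappa'(\s_i) = \max(N, |\s_i^{\Ap}|)$ for $i > \ell$. I would then check the hypotheses of finite smoothness (\Cref{finite smoothness}) for $\Ap$ and $\kappa'$: the values of $\kappa'$ are finite (for $i \le \ell$ because $\kappa(\s_i) < \aleph_0$ by the choice of $\ell$, and for $i > \ell$ by construction), and $|\s_i^{\Ap}| \le \kappa'(\s_i)$ for every $i$ (for $i \le \ell$ this uses $|\s_i^{\Ap}| \le |\s_i^{\A}| \le \kappa(\s_i)$, combining the shrinking step with the standing hypothesis $\kappa(\s) \ge |\s^{\A}|$; for $i > \ell$ it is immediate). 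Finite smoothness then yields a $\T$-interpretation $\B$ with $\B \vDash \varphi$ and $|\s^{\B}| = \kappa'(\s)$ for every $\s \in S$, that is, $|\s_i^{\B}| = \kappa(\s_i)$ for $i \le \ell$ and $N \le |\s_i^{\B}| < \aleph_0$ for $i > \ell$. Since $N$ was arbitrary, this is exactly the ``arbitrarily large but finite'' conclusion.

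I do not anticipate a real obstacle: the argument is essentially a two-step combination of the two hypotheses plus bookkeeping. The one subtlety worth flagging is that finite smoothness can only reshape sorts that are \emph{already finite} in the interpretation it is applied to, which is precisely why the stable-finiteness preprocessing is genuinely needed (the sorts $\s_i$ with $i > \ell$ may well be infinite in $\A$). One must also make sure the monotonicity side condition $|\s_i^{\Ap}| \le \kappa'(\s_i)$ of finite smoothness survives the shrinking step; it does, because shrinking only decreases cardinalities while $\kappa(\s_i) \ge |\s_i^{\A}|$ by assumption, so for $i \le \ell$ we keep $|\s_i^{\Ap}| \le \kappa(\s_i)$.
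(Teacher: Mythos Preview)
Your proposal is correct and follows essentially the same two-step approach as the paper: first apply stable finiteness to make all sorts in $S$ finite, then apply finite smoothness with $\kappa'(\s_i)=\kappa(\s_i)$ for $i\le\ell$ and $\kappa'(\s_i)$ an arbitrarily large finite value for $i>\ell$. Your write-up is in fact more explicit than the paper's (making the parameter $N$ concrete and carefully checking the monotonicity hypothesis via $\max(N,|\s_i^{\Ap}|)$), but the argument is the same.
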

\begin{proof}
    First, apply stable finiteness to get a $\T$-interpretation $\A'$ satisfying $\varphi$ such that $|\s_i^{\A'}| \le |\s_i^\A|$ and $|\s_i^{\A'}| < \aleph_0$ for all $i$. Then, apply finite smoothness to $\A'$ with $\kappa'$ given by $\kappa'(\s_i) = \kappa(\s_i)$ for all $i \le \ell$ and $\kappa'(\s_i)$ arbitrarily large but finite for all $i > \ell$.\qed
\end{proof}

It will be convenient to work with a theory with built-in Skolem functions, so we use \Cref{builtin} to get a $\Sigma^*$-theory $\T^* \supseteq \T$, where $\Sigma^* \supseteq \Sigma$ and $\Sigma^*$ is countable.
To construct our set of formulas $\Gamma$, we introduce $\kappa(\s_i)$ new constants $\{c_{i,\alpha}\}_{\alpha < \kappa(\s_i)}$ of sort $\s_i$ for each $i$. We consider these constants to be part of an even larger signature $\Sigma' \supseteq \Sigma^*$. In what follows, we construct sentences and interpretations over $\Sigma'$. Impose an arbitrary total order on each $\{c_{i,\alpha}\}_{\alpha < \kappa(\s_i)}$ to be used for the $\sim$ relation. For the definition below, recall that given a set $X$, we define $X^{*}= \bigcup_{n\in\mathbb{N}}X^{n}$.

\begin{definition}
    We define a set of formulas $\Gamma = \{\varphi\} \cup \Gamma_1 \cup \Gamma_2 \cup \Gamma_3$, where
    \begin{align*}
        \Gamma_1 = &\{\neg (c_{i,\alpha} = c_{i,\beta}) : 1 \le i \le |S|;\: \alpha, \beta < \kappa(\s_i);\: \alpha \neq \beta\} \\
        \Gamma_2 = &\left\{t\left(\overarrow{c_1}, \dots, \overarrow{c_i}, \overarrow{b_{i+1}}, \overarrow{b_{i+2}}, \dots\right) = t\left(\overarrow{c_1}, \dots, \overarrow{c_i}, \overarrow{d_{i+1}}, \overarrow{d_{i+2}}, \dots\right) : \right. \\
        &t \ \text{is a $\Sigma^*$-term of sort} \ \s_i;\: i > \ell;\: \overarrow{c_k}, \overarrow{b_k}, \overarrow{d_k} \in (\{c_{k, \alpha}\}_{\alpha < \kappa(\s_k)})^*\\
        &\left.  \text{for all} \ k;\: \overarrow{b_j} \sim \overarrow{d_j} \ \text{for all} \ j > i \right\} \displaybreak\\
        \Gamma_3 = &\left\{\Forall{x : \s_i} \bigvee_{\alpha < \kappa(\s_i)} x = c_{i,\alpha} : i \le \ell\right\}.
    \end{align*}
\end{definition}

Note that the disjunctions in $\Gamma_3$ are finite given the condition $i \le \ell$.

\begin{restatable}{lemma}{consistent} \label{lem-consistent}
    There is a $\T^*$-interpretation $\B$ such that $\B \vDash \Gamma$.
\end{restatable}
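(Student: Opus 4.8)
The plan is to use the Compactness Theorem (\Cref{compact}): to show $\T^*\cup\Gamma$ is satisfiable, it suffices to show every finite $\Gamma'\subseteq\Gamma$ is $\T^*$-satisfiable. Fix such a $\Gamma'$, and let $N$ be large enough that all new constants appearing in $\Gamma'$ lie among $\{c_{i,\alpha} : i\le N,\ \alpha<\kappa(\s_i)\}$, and moreover for each $i$ only finitely many constants $c_{i,\alpha}$ appear, say $n_i$ of them; also only finitely many $\Sigma^*$-terms appear in the (finitely many) equalities of $\Gamma_2\cap\Gamma'$. First I would invoke \Cref{lem-big-model} to get a $\T$-interpretation $\B_0\vDash\varphi$ with $|\s_i^{\B_0}|=\kappa(\s_i)$ for $i\le\ell$ and $|\s_i^{\B_0}|$ finite but as large as we please for $i>\ell$; then expand $\B_0$ to a $\T^*$-interpretation using the built-in Skolem functions (this is where working with $\T^*$ rather than $\T$ pays off, since $\Gamma_2$ quantifies over $\Sigma^*$-terms). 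It remains to interpret the finitely many relevant new constants so that $\Gamma_1\cap\Gamma'$, $\Gamma_2\cap\Gamma'$, and $\Gamma_3\cap\Gamma'$ all hold.

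The constraints decouple by sort. For $i\le\ell$: $\Gamma_3$ forces $\s_i$ to be covered by the $c_{i,\alpha}$'s, but since $|\s_i^{\B_0}|=\kappa(\s_i)$ is exactly the number of constants $c_{i,\alpha}$ available, I simply let these constants enumerate $\s_i^{\B_0}$ bijectively; this simultaneously satisfies $\Gamma_1$ (distinctness) and $\Gamma_3$ (surjectivity) for sort $\s_i$. Note such $\s_i$ never occur as the output sort of a term in $\Gamma_2$ (which requires $i>\ell$), so $\Gamma_2$ imposes no constraint there. For $\ell<i\le N$: here we only need the $n_i$ relevant constants $c_{i,\alpha}$ to be interpreted as distinct elements of $\s_i^{\B_0}$ (for $\Gamma_1$) in such a way that the finitely many term-equalities in $\Gamma_2\cap\Gamma'$ hold. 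This is the heart of the argument and where Ramsey's theorem enters.

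The equalities in $\Gamma_2\cap\Gamma'$ say: for each relevant term $t$ of sort $\s_i$ and each way of choosing tuples from the $c_{k,\cdot}$'s that agree on $\sim$-type in sorts $>i$, the value of $t$ is unchanged. To arrange this, I would choose the interpretations of the new constants by downward induction on the sort index, from $N$ down to $\ell+1$. Having already fixed (distinct) interpretations of the relevant constants of sorts $>i$ inside $\s_k^{\B_0}$, I consider, for each relevant term $t$ of sort $\s_i$, the function on tuples from $\s_i^{\B_0}$ (of the appropriate arity) obtained by plugging in the already-chosen higher-sort values and leaving the sort-$\s_i$ slots free; this is a coloring of $(\s_i^{\B_0})^{n}$ by the finite set $\s_i^{\B_0}$. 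Applying \Cref{lem-mult-ramsey-directed} (with the finitely many such functions across all relevant $t$, $k=|\s_i^{\B_0}|$, and $m=n_i$) to the totally ordered set $\s_i^{\B_0}$ — choosing $|\s_i^{\B_0}|$ large enough via \Cref{lem-big-model}, namely at least $R^{**}$ of these parameters — yields a subset $Y_i\subseteq\s_i^{\B_0}$ with $|Y_i|\ge n_i$ on which every relevant term is $\sim$-type-constant. I then interpret the $n_i$ relevant constants $c_{i,\alpha}$ as distinct elements of $Y_i$, respecting the fixed total order so that their $\sim$-type matches what $\Gamma_2$ requires. Iterating down to $i=\ell+1$ produces an interpretation satisfying $\Gamma_1$ (distinctness, by choice), $\Gamma_2$ (the term-equalities, by $\sim$-type-constancy on the $Y_i$'s — here one checks that changing a tuple to a $\sim$-equivalent one inside $Y_i$, in any of the free sorts $\le i$ actually occurring, does not change the term's value, proceeding one sort at a time), and $\Gamma_3$ (vacuous for $i>\ell$, handled above for $i\le\ell$).

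The main obstacle is the bookkeeping in the inductive step: a single term $t$ of sort $\s_i$ may contain variables of several sorts $\ge i$, and the $\Gamma_2$-equalities compare tuples that are allowed to differ (up to $\sim$-type) in all sorts $>i$ simultaneously while being identical in sorts $\le i$ except for the outermost block. One must set up the coloring and the induction so that Ramsey-constancy at each sort $i$, combined with the already-established constancy at higher sorts, telescopes to give the full $\Gamma_2$ identity; care is also needed to ensure the ``arbitrarily large but finite'' freedom in \Cref{lem-big-model} is invoked with cardinalities exceeding the relevant $R^{**}$ values, which depend only on $\Gamma'$ and hence are fixed finite numbers. Everything else — the compactness wrapper, the Skolem expansion, the $i\le\ell$ case — is routine.
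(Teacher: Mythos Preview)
Your overall architecture --- compactness, then a downward induction on sort indices using \Cref{lem-mult-ramsey-directed} --- is exactly the paper's. But there is a genuine gap in how you set up the Ramsey step.

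At step $i$ you take each ``relevant term $t$ of sort $\s_i$'' and view it as ``a coloring of $(\s_i^{\B_0})^{n}$ by the finite set $\s_i^{\B_0}$'', applying \Cref{lem-mult-ramsey-directed} with $k=|\s_i^{\B_0}|$. This is circular: you would need $|\s_i^{\B_0}|\ge R^{**}(|\s_i^{\B_0}|,\overarrow{n},n_i)$, and since $R^{**}$ grows in its first argument no finite choice of $|\s_i^{\B_0}|$ can satisfy this (already in the pigeonhole case $\overarrow{n}=(1)$ one has $R^{**}(k,(1),m)=km+1>k$). The underlying confusion is about \emph{which} terms belong at step $i$. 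Re-read $\Gamma_2$: for a term $t$ of output sort $\s_k$, the equality fixes the arguments of sorts $\le k$ and varies (up to $\sim$) only those of sorts $>k$. Hence when you are choosing the sort-$\s_i$ constants, the terms you must control are those of output sort $\s_k$ with $\ell<k<i$, not $k=i$. The paper accordingly colors tuples from $\s_i^{\B'}$ by $\s_{\ell+1}^{\B'}\cup\cdots\cup\s_{i-1}^{\B'}$, so the number of colors depends only on cardinalities already fixed (the sizes $|\s_j^{\B'}|$ are determined by an \emph{upward} induction before invoking \Cref{lem-big-model}, and only then are the constants interpreted by the downward one). A related point you leave open: such a term may also have arguments of sort $\s_j$ with $j<i$, whose constants are not yet chosen; the paper lets those slots range over all of $\s_j^{\B'}$, producing a family of $m_{t,i}$ functions rather than one. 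Without these two corrections your final ``one sort at a time'' telescoping does not verify $\Gamma_2'$: with your indexing you would be enforcing constancy in the sort-$i$ arguments of a term of output sort $\s_i$, which is neither what $\Gamma_2$ asserts nor what is needed.
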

This lemma, whose proof is in the appendix, 
forms the core of the argument. By the compactness theorem, it suffices to prove that for any finite subset $\Gamma' \subseteq \Gamma$, there is a $\T^*$-interpretation $\B'$ such that $\B' \vDash \Gamma'$. The tricky part is making $\B'$ satisfy $\Gamma' \cap \Gamma_2$. The strategy is to use \Cref{lem-big-model} to construct a model $\B'$ in which $|\s_{i+1}^{\B'}|$ is very large in terms of $|\s_i^{\B'}|$ for each $i > \ell$. \Cref{lem-mult-ramsey-directed} will ensure that there is some way of interpreting the constants $\{c_{i,\alpha}\}_{\alpha < \kappa(\s_i)}$ so that $\B' \vDash \Gamma' \cap \Gamma_2$.

We are now ready to prove \Cref{thm-smoothness-from-finite-smoothness}.

\begin{proof}[\Cref{thm-smoothness-from-finite-smoothness}]
    By \Cref{lem-consistent}, there is a $\T^*$-in\-ter\-pre\-ta\-tion $\B$ such that $\B \vDash \Gamma$. Let
    \[
        B = \left\{t^\B\left((\overarrow{c_1})^\B, (\overarrow{c_2})^\B, \dots\right) : t \ \text{is a $\Sigma^*$-term};\: \overarrow{c_i} \in (\{c_{i, \alpha}\}_{\alpha < \kappa(\s_i)})^* \ \text{for all} \ i\right\}.
    \]
    For every $f \in \F_\Sigma$, the set $B$ is closed under $f^\B$. Thus, we can define $\B^-$ to be the subinterpretation of $\B$ obtained by restricting the sorts, functions, and predicates to $B$.\footnote{In other words, $\B^-$ is the Skolem hull of $\bigcup_i \{c_{i,\alpha}^\B\}_{\alpha < \kappa(\s_i)}$ in $\B$ \cite[p. 180]{marker2002}.} Since the $\Sigma^*$-theory $\T^*$ has built-in Skolem functions, $\B^-$ is an elementary subinterpretation of $\B$ by \Cref{TV test}. We claim $|\s_i^{\B^-}| = \kappa(\s_i)$ for all $i$.

    First, $\{c_{i,\alpha}^{\B^-}\}_{\alpha < \kappa(\s_i)}$ is a set of $\kappa(\s_i)$ distinct elements in $\s_i^{\B^-}$, because $\B^- \vDash \Gamma_1$. Thus, $|\s_i^{\B^-}| \ge \kappa(\s_i)$ for all $i$.

    Second, $|\s_i^{\B^-}| \le |\{c_{i,\alpha}\}_{\alpha < \kappa(\s_i)}| = \kappa(\s_i)$ for all $i \in [\ell]$, as $\B^- \vDash \Gamma_3$.

    Finally, it remains to show that $|\s_i^{\B^-}| \le \kappa(\s_i)$ for all $i > \ell$. Inductively suppose that $|\s_j^{\B^-}| \le \kappa(\s_j)$ for all $j < i$. Now, every element of $\s_i^{\B^-}$ is of the form
    \[
        t^\B\left((\overarrow{c_1})^\B, \dots, (\overarrow{c_i})^\B, (\overarrow{c_{i+1}})^\B, (\overarrow{c_{i+2}})^\B, \dots\right),
    \]
    where $t$ is a $\Sigma^*$-term of sort $\s_i$. Since $\Sigma^*$ is countable, there are at most $\aleph_0$ choices for $t$. We have at most $\kappa(\s_i)$ choices for
    $
        (\overarrow{c_1})^\B, \dots, (\overarrow{c_i})^\B.
    $
    Finally, we have finitely many choices for 
    $(\overarrow{c_{i+1}})^\B, (\overarrow{c_{i+2}})^\B, \ldots$
    up to $\sim$-equivalence. Since $\B^- \vDash \Gamma_2$, it follows that there are at most $\kappa(\s_i)$ elements of $\s_i^{\B^-}$.
    Therefore, $\B^-$ is a $\T^*$-interpretation satisfying $\varphi$ with $|\s_i^{\B^-}| = \kappa(\s_i)$ for all $i$. Taking the reduct of $\B^-$ to $\Sigma$ gives the desired $\T$-interpretation.\qed
\end{proof}

\subsection{Applications to theory combination}

Since \Cref{thm-no-unicorn} implies that stably infinite and strongly finitely witnessable theories are strongly polite, we can restate the theorem on strongly polite theory combination with weaker hypotheses. This was already proved in \cite{BarTolZoh} via a different method, but is now obtained as an immediate corollary of \Cref{thm-no-unicorn}.

\begin{corollary} \label{cor-polite}
    Let $\Sigma_1$ and $\Sigma_2$ be disjoint countable signatures. Let $\T_1$ and $\T_2$ be $\Sigma_1$- and $\Sigma_2$-theories respectively, and let $\varphi_1$ and $\varphi_2$ be quantifier-free $\Sigma_1$- and $\Sigma_2$-formulas respectively. Suppose $\T_1$ is stably infinite and strongly finitely witnessable, both with respect to $\S_{\Sigma_1} \cap \S_{\Sigma_2}$, and let $V = \vars_{\S_{\Sigma_1} \cap \S_{\Sigma_2}}(\wit(\varphi_1))$. Then, $\varphi_1 \land \varphi_2$ is $(\T_1 \cup \T_2)$-satisfiable if and only if there is an arrangement $\delta_V$ on $V$ such that $\wit(\varphi_1) \land \delta_V$ is $\T_1$-satisfiable and $\varphi_2 \land \delta_V$ is $\T_2$-satisfiable.
\end{corollary}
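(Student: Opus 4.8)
The plan is to obtain this as an immediate consequence of \Cref{thm-no-unicorn} together with the classical combination theorem for strongly polite theories. Write $S := \S_{\Sigma_1} \cap \S_{\Sigma_2}$ for the set of shared sorts. The hypothesis is that $\T_1$ is stably infinite and strongly finitely witnessable, both with respect to $S$; since $\Sigma_1$ is countable, \Cref{thm-no-unicorn} applies and yields that $\T_1$ is moreover smooth with respect to $S$. A theory that is both smooth and strongly finitely witnessable with respect to $S$ is, by definition, strongly polite with respect to $S$, so we conclude that $\T_1$ is strongly polite with respect to $S$.

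Next I would invoke the strongly polite combination theorem \cite{stronglypolite,casalrasga2018}, which states exactly that whenever $\Sigma_1$ and $\Sigma_2$ are disjoint signatures, $\T_1$ is strongly polite with respect to $S$, and $\varphi_1, \varphi_2$ are quantifier-free $\Sigma_1$- and $\Sigma_2$-formulas, then, setting $V = \vars_S(\wit(\varphi_1))$, the formula $\varphi_1 \land \varphi_2$ is $(\T_1 \cup \T_2)$-satisfiable if and only if there is an arrangement $\delta_V$ on $V$ such that $\wit(\varphi_1) \land \delta_V$ is $\T_1$-satisfiable and $\varphi_2 \land \delta_V$ is $\T_2$-satisfiable. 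All of the hypotheses of that theorem are secured by the previous paragraph, so its conclusion transfers verbatim and the corollary follows.

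The hard part is not in this corollary at all: the entire mathematical content has been pushed into \Cref{thm-no-unicorn}, and what remains is simply the observation that ``stably infinite and strongly finitely witnessable'' can be substituted for ``strongly polite'' in the hypothesis of the classical theorem. The only point I would check carefully is that the ingredients appearing in the statement --- the witness function $\wit$ supplied by strong finite witnessability, the variable set $V = \vars_S(\wit(\varphi_1))$, and the notion of arrangement $\delta_V$ --- are literally the same objects used in the reference statement of the strongly polite combination theorem, which they are, since the definitions in \Cref{Model-theoretic properties} are chosen to match the literature.
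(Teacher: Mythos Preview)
Your proposal is correct and matches the paper's approach exactly: the paper states that the corollary ``is now obtained as an immediate corollary of \Cref{thm-no-unicorn},'' meaning one applies \Cref{thm-no-unicorn} to upgrade stable infiniteness plus strong finite witnessability to strong politeness, and then invokes the existing strongly polite combination theorem verbatim. There is nothing to add.
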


We can also use our results to give a new characterization of shiny theories, which allows us to restate shiny combination theorem with weaker hypotheses.

To define shininess, we first need a few other notions. Let $\Sigma$ be a signature with $\S_\Sigma$ finite, and let $S \subseteq \S_\Sigma$. Write $S = \{\s_1, \dots, \s_n\}$. Then, the \emph{$S$-size} of a $\Sigma$-interpretation $\A$ is given by the tuple $(|\s_1^\A|, \dots, |\s_n^\A|)$. Such $n$-tuples are partially ordered by the product order: $(x_1, \dots, x_n) \preceq (y_1, \dots, y_n)$ if and only if $x_i \le y_i$ for all $i \in [n]$. Given a quantifier-free formula $\varphi$, let $\minmods{\T}{S}(\varphi)$ be the set of minimal $S$-sizes of $\T$-interpretations satisfying $\varphi$. It follows from results in \cite{wqo} that $\minmods{\T}{S}(\varphi)$ is a finite set of tuples.\footnote{\cite{casalrasga2018} proves this 
assuming that $\T$ is stably finite, using Hilbert's basis theorem. This assumption can be dropped by using the fact that if $(X, \le)$ is a well-quasi-order, then so is $(X^n, \prec)$, where $\prec$ is the product order. Here $X$ is the class of cardinals.}

Then, we say a $\Sigma$-theory $\T$ is \emph{shiny} with respect to some subset of sorts $S \subseteq \S_\Sigma$ if $\S_\Sigma$ is finite, $\T$ is stably finite and smooth, both with respect to $S$, and $\minmods{\T}{S}$ is computable. \Cref{thm-smoothness-from-finite-smoothness} implies that we can replace smoothness by finite smoothness, which may make it easier to prove that some theories are shiny. We can therefore improve the shiny theory combination theorem from \cite[Theorem~2]{casalrasga2018} as an immediate corollary of \Cref{thm-smoothness-from-finite-smoothness}.

\begin{corollary} \label{cor-shiny}
    Let $\Sigma_1$ and $\Sigma_2$ be disjoint countable signatures, where $\S_{\Sigma_1}$ and $\S_{\Sigma_2}$ are finite. Let $\T_1$ and $\T_2$ be $\Sigma_1$- and $\Sigma_2$-theories respectively, and assume the satisfiability problems for quantifier-free formulas of both $\T_{1}$ and $\T_{2}$ are decidable. Suppose $\T_1$ is stably finite and finitely smooth, both with respect to $\S_{\Sigma_1} \cap \S_{\Sigma_2}$, and $\minmods{\T_1}{\S_{\Sigma_1} \cap \S_{\Sigma_2}}$ is computable. Then, the satisfiability problem for quantifier-free formulas of $\T_{1}\cup\T_{2}$ is decidable.
\end{corollary}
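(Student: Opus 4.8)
\textbf{Proof proposal for \Cref{cor-shiny}.}

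The plan is to reduce \Cref{cor-shiny} to the shiny combination theorem of \cite[Theorem~2]{casalrasga2018}, using \Cref{thm-smoothness-from-finite-smoothness} to upgrade the ``finitely smooth'' hypothesis to ``smooth''. Set $S := \S_{\Sigma_1} \cap \S_{\Sigma_2}$. Since $\S_{\Sigma_1}$ is finite, in particular $\Sigma_1$ is countable (and, as noted in \Cref{NoUnicorns}, all signatures here are assumed countable anyway), so \Cref{thm-smoothness-from-finite-smoothness} applies to $\T_1$. By hypothesis $\T_1$ is stably finite and finitely smooth, both with respect to $S$; hence \Cref{thm-smoothness-from-finite-smoothness} yields that $\T_1$ is smooth with respect to $S$.

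The steps, in order, are then: (1) apply \Cref{thm-smoothness-from-finite-smoothness} as above to get smoothness of $\T_1$ with respect to $S$; (2) assemble the definition of shininess for $\T_1$ with respect to $S$ --- namely that $\S_{\Sigma_1}$ is finite, that $\T_1$ is stably finite and smooth with respect to $S$ (the former by hypothesis, the latter from step (1)), and that $\minmods{\T_1}{S}$ is computable (by hypothesis) --- so that $\T_1$ is shiny with respect to $S = \S_{\Sigma_1}\cap\S_{\Sigma_2}$; (3) invoke \cite[Theorem~2]{casalrasga2018} with the disjoint signatures $\Sigma_1, \Sigma_2$, using that $\T_1$ is shiny with respect to $\S_{\Sigma_1}\cap\S_{\Sigma_2}$ and that the quantifier-free satisfiability problems of $\T_1$ and $\T_2$ are decidable, to conclude that the quantifier-free satisfiability problem of $\T_1 \cup \T_2$ is decidable.

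There is no genuine obstacle here: once \Cref{thm-smoothness-from-finite-smoothness} is available, \Cref{cor-shiny} is essentially a one-line corollary. The only thing to be careful about is that every hypothesis of the external shiny combination theorem and of the definition of shininess is literally met --- in particular that $\S_{\Sigma_1}$ is finite (which is needed even to state that $\T_1$ is shiny), that the set of shared sorts we combine over is exactly $\S_{\Sigma_1}\cap\S_{\Sigma_2}$, and that computability of $\minmods{\T_1}{\S_{\Sigma_1}\cap\S_{\Sigma_2}}$ (a finite set of tuples, by the discussion preceding the corollary) is precisely the effectiveness condition required. I would also remark in passing that, by \Cref{lem-fin-model}, one could equally well start from strong finite witnessability in place of stable finiteness, since the former implies the latter, but this strengthening is not needed for the statement as given.
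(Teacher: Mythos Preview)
Your proposal is correct and is exactly the approach the paper intends: the paper states \Cref{cor-shiny} as ``an immediate corollary of \Cref{thm-smoothness-from-finite-smoothness}'' that improves \cite[Theorem~2]{casalrasga2018}, and your three steps---upgrade finite smoothness to smoothness via \Cref{thm-smoothness-from-finite-smoothness}, verify the definition of shininess for $\T_1$, then invoke the shiny combination theorem---are precisely the intended argument. One small nit: finiteness of $\S_{\Sigma_1}$ does not by itself force $\Sigma_1$ to be countable, but countability is already an explicit hypothesis, so nothing is lost.
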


\section{Many-sorted L{\"o}wenheim--Skolem theorems}\label{LowenheimSkolem}

In this section, we state many-sorted generalizations of the L{\"o}wen\-heim--Skolem theorem. Our first results, in \Cref{sec:down}, hold with no assumptions on the signature. Later, in \Cref{sec:split}, we state stronger results for restricted signatures, which we then 
use for a many-sorted variant of the 
{\L}o\'s--Vaught test in
\Cref{sec:los-vaught}. But first, 
in \Cref{Somedifficulties},
we explain 
the limitations of relying solely on translations to single-sorted first-order logic.

\subsection{Lost in translation}\label{Somedifficulties}

We may transform a many-sorted signature into a single-sorted signature by adding unary predicates signifying the sorts; of course, some restrictions are necessary, 
distinctness of sorts, etc. 
This procedure~\cite{Wang,Enderton,Monk} is often used to lift results from single-sorted to many-sorted logic. 
As one example, standard versions of the downward L{\"o}wenheim--Skolem theorem for many-sorted logic, found in \cite{Monzano93}, are proven using this translation; we can, however, strengthen these results while still using only translations:

\begin{restatable}[Downward]{theorem}{translateddownwardmanysorted}\label{Lowenheim}
    Let $\Sigma$ be a many-sorted signature with $|\S_{\Sigma}|<\aleph_{0}$. Suppose we have a $\Sigma$-structure $\sA$ with $\max\{|\s^{\sA}| : \s\in \S_{\Sigma}\}\geq \aleph_{0}$, a cardinal $\kappa$ satisfying $\max\{|\Sigma|, \aleph_{0}\}\leq\kappa\leq\min\{|\s^{\sA}| : \s\in \Sasa{\geq}\}$,
    and sets $A_{\s}\subseteq \s^{\sA}$ with $|A_{\s}|\leq\kappa$ for each $\sigma\in\S_{\Sigma}$. Then, there is an elementary substructure $\sB$ of $\sA$ such that $\s^{\sB}=\s^{\sA}$ for every $\s\in \Sasa{<}$, $\aleph_{0}\leq|\s^{\sB}|\leq \kappa$ for all $\s\in\Sasa{\geq}$, $|\s^{\sB}|=\kappa$ for some $\s\in\S_{\Sigma}$, and $A_{\s}\subseteq \s^{\sB}$ for all $\s\in\S_{\Sigma}$.
\end{restatable}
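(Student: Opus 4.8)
\textbf{Proof proposal for \Cref{Lowenheim}.}
The plan is to reduce to the single-sorted downward L\"owenheim--Skolem theorem via the standard sorts-as-predicates translation, then translate the resulting substructure back. First I would fix the translation: given the finite sort set $\S_{\Sigma} = \{\s_1, \dots, \s_k\}$, build a single-sorted signature $\Sigma^{\flat}$ containing a unary predicate $U_{\s}$ for each $\s \in \S_{\Sigma}$, a $(n+1)$-ary relation symbol for each $n$-ary function symbol of $\Sigma$ (to sidestep the partiality issue at the single-sorted level), and the predicate symbols of $\Sigma$; note $|\Sigma^{\flat}| = |\Sigma|$ up to the finite additive term $|\S_\Sigma|$, so $\max\{|\Sigma^{\flat}|, \aleph_0\} = \max\{|\Sigma|, \aleph_0\} \le \kappa$. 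Turn $\sA$ into a $\Sigma^{\flat}$-structure $\sA^{\flat}$ on the disjoint union $\bigsqcup_{\s} \s^{\sA}$, interpreting each $U_{\s}$ as $\s^{\sA}$. There is a standard relativization map $\varphi \mapsto \varphi^{\flat}$ on formulas such that $\sA \vDash \varphi$ iff $\sA^{\flat} \vDash \varphi^{\flat}$, and a matching correspondence between many-sorted elementary substructures of $\sA$ and $\Sigma^{\flat}$-elementary substructures of $\sA^{\flat}$ whose universe is closed under "belonging to some $U_\s$"; this correspondence is already the engine behind the versions in \cite{Monzano93}, and I would cite it rather than reprove it.

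The substantive point is arranging the cardinality bookkeeping so that the single-sorted theorem delivers exactly the conclusion claimed. I would apply the classical downward L\"owenheim--Skolem theorem to $\sA^{\flat}$ with the seed set
\[
    A \;=\; \Bigl(\bigcup_{\s \in \S_\Sigma} A_{\s}\Bigr) \;\cup\; \bigcup_{\s \in \Sasa{<}} \s^{\sA} \;\cup\; \{a_{\s} : \s \in \S_\Sigma\},
\]
where $a_\s$ is one extra chosen element of $\s^{\sA}$, added to guarantee every sort remains nonempty. Since each $A_{\s}$ has size $\le \kappa$, each finite sort contributes $< \aleph_0 \le \kappa$ elements, and there are finitely many sorts, we get $|A| \le \kappa$; combined with $\kappa \ge \max\{|\Sigma^\flat|, \aleph_0\}$, the single-sorted theorem yields an elementary substructure $\sB^{\flat} \preceq \sA^{\flat}$ with $A \subseteq B^\flat$ and $|B^\flat| = \kappa$ exactly. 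Translating back gives a $\Sigma$-structure $\sB$ that is an elementary substructure of $\sA$ (here I use that $B^\flat$ contains the seed, hence meets every $U_\s$, so $\sB$ has all sorts nonempty and is a genuine many-sorted substructure). Now I read off the four conclusions: (i) for $\s \in \Sasa{<}$ we forced $\s^{\sA} \subseteq B^\flat$, and $\s^{\sB} \subseteq \s^{\sA}$ always, so $\s^{\sB} = \s^{\sA}$; (ii) $A_\s \subseteq \s^{\sB}$ for every $\s$ by construction of the seed; (iii) $\s^{\sB} = U_\s^{\sB^\flat}$ is a subset of $B^\flat$, so $|\s^{\sB}| \le \kappa$, and for $\s \in \Sasa{\geq}$ it is infinite because $\sB \preceq \sA$ forces $\s^{\sB}$ to satisfy the same "there are at least $n$ elements of sort $\s$" sentences as $\s^{\sA}$ for all $n$; (iv) since the finitely many sorts of $\sB$ partition the $\kappa$-sized set $B^\flat$ and $\kappa$ is infinite, by the pigeonhole/cardinal-arithmetic fact that a finite union of sets of size $< \kappa$ cannot have size $\kappa$, some sort must have $|\s^{\sB}| = \kappa$.

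The main obstacle, and the only place needing care, is item (iv): the classical theorem only guarantees $|B^\flat| = \kappa$, not that any individual sort reaches $\kappa$. If one is unlucky, one could imagine all sorts coming out of size $< \kappa$ while their union still had size $\kappa$ — but this is impossible precisely because $\kappa$ is infinite and there are only finitely many sorts ($|\S_\Sigma| < \aleph_0$ is used here in an essential way), so $|B^\flat| = \sum_{\s} |\s^{\sB}| = \max_{\s} |\s^{\sB}|$ when the max is infinite, forcing the max to equal $\kappa$. (If all sorts were finite the hypothesis $\max\{|\s^{\sA}|\} \ge \aleph_0$ would be violated, and if the max is infinite but $< \kappa$ then $|B^\flat| < \kappa$, a contradiction.) A secondary, purely bureaucratic point is checking that adding the relation symbols for function graphs, rather than partial function symbols, does not disturb the elementary-substructure correspondence — this is routine since $\sB^\flat$ being elementary in $\sA^\flat$ transfers the sentences asserting that each graph relation is total and functional on each relevant $U_\s$, so $\sB^\flat$ is the graph-encoding of a genuine $\Sigma$-structure.
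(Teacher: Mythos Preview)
The proposal is correct and follows essentially the same translation-based approach as the paper: reduce to single-sorted logic via sort predicates, apply the classical downward L\"owenheim--Skolem theorem to the disjoint-union structure with an appropriate seed set, translate back, and use finiteness of $\S_\Sigma$ for the pigeonhole step that forces some sort to hit $\kappa$. The only differences are cosmetic --- you encode functions by their graph relations where the paper extends them arbitrarily off the intended domain, and you seed the finite sorts explicitly where the paper recovers $\s^{\sB}=\s^{\sA}$ for $\s\in\Sasa{<}$ from elementary equivalence plus the substructure inclusion.
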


\begin{restatable}[Upward]{theorem}{translatedULS}\label{translated ULS}
    Let $\Sigma$ be a many-sorted signature with $|\S_{\Sigma}|<\aleph_{0}$. Suppose we have a $\Sigma$-structure $\sA$ with $\max\{|\s^{\sA}| : \s\in \S_{\Sigma}\}\geq \aleph_{0}$ and a cardinal 
    $\kappa \geq \max\{|\Sigma|, \max\{|\s^{\sA}| : \s\in\S_{\Sigma}\}\}$. Then, there is a $\Sigma$-structure $\sB$ containing $\sA$ as an elementary substructure such that $\s^{\sB}=\s^{\sA}$ for all $\s\in\Sasa{<}$, $\aleph_{0}\leq|\s^{\sB}|\leq\kappa$ for all $\s\in\Sasa{\geq}$, and $|\s^{\sB}|=\kappa$ for some sort $\s\in\S_{\Sigma}$.
\end{restatable}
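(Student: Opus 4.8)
The plan is to prove \Cref{translated ULS} via the standard translation from many-sorted to single-sorted logic, reducing to the classical (single-sorted) upward L\"owenheim--Skolem theorem. First I would fix an auxiliary single-sorted signature $\Sigma^{\circ}$ obtained from $\Sigma$ by keeping all function and predicate symbols and adding one fresh unary predicate symbol $P_{\s}$ for each $\s \in \S_{\Sigma}$. Since $|\S_{\Sigma}| < \aleph_{0}$, we have $|\Sigma^{\circ}| = \max\{|\Sigma|, \aleph_{0}\}$ if $|\Sigma|$ is infinite and $|\Sigma^{\circ}| < \aleph_{0}$ otherwise, so in all cases $|\Sigma^{\circ}| \le \kappa$. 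I would then define the single-sorted structure $\sA^{\circ}$ with domain the disjoint union $\bigsqcup_{\s \in \S_{\Sigma}} \s^{\sA}$, setting $P_{\s}^{\sA^{\circ}} = \s^{\sA}$, interpreting each predicate of $\Sigma$ as in $\sA$, and interpreting each function symbol as in $\sA$ on argument tuples of the appropriate sorts and as some fixed element otherwise. The key bookkeeping is that all sort-coherence facts are captured by a finite set $\Theta$ of first-order $\Sigma^{\circ}$-sentences: the predicates $P_{\s}$ partition the domain, each predicate $R$ of arity $(\s_{1}, \dots, \s_{n})$ holds only on $P_{\s_{1}} \times \cdots \times P_{\s_{n}}$, and each function $f$ of arity $(\s_{1}, \dots, \s_{n}, \s)$ sends $P_{\s_{1}} \times \cdots \times P_{\s_{n}}$ into $P_{\s}$; moreover, for each $\s \in \Sasa{<}$, the sentence ``$P_{\s}$ has exactly $|\s^{\sA}|$ elements'' is first-order and holds in $\sA^{\circ}$. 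All of these sentences are first-order precisely because $\S_{\Sigma}$ is finite and all arities are finite.

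Next I would apply the classical upward L\"owenheim--Skolem theorem to $\sA^{\circ}$. We have $|\sA^{\circ}| = \sum_{\s} |\s^{\sA}| = \max_{\s} |\s^{\sA}| \ge \aleph_{0}$, a finite sum of cardinals one of which is infinite, so $\sA^{\circ}$ is infinite; and by hypothesis $\kappa \ge \max\{|\Sigma|, \max_{\s} |\s^{\sA}|\} \ge \max\{|\Sigma^{\circ}|, |\sA^{\circ}|\}$. Hence there is an elementary extension $\sB^{\circ} \succeq \sA^{\circ}$ with $|\sB^{\circ}| = \kappa$. Since the sentences in $\Theta$ and the ``exactly $n$ elements'' sentences are first-order, they are preserved under elementary extension, so $\sB^{\circ} \vDash \Theta$ and $|P_{\s}^{\sB^{\circ}}| = |\s^{\sA}|$ for every $\s \in \Sasa{<}$. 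Now translate back: let $\sB$ be the $\Sigma$-structure with $\s^{\sB} = P_{\s}^{\sB^{\circ}}$ and with all predicates and functions restricted accordingly; this is well-defined and the functions are total on the intended domains precisely because $\sB^{\circ} \vDash \Theta$, and each $\s^{\sB} \supseteq \s^{\sA} \ne \emptyset$. Using the standard formula-by-formula translation $\varphi \mapsto \varphi^{\circ}$ (relativizing each quantifier $\Forall{x : \s}$ or $\Exists{x : \s}$ to $P_{\s}$), for which $(\sA, \nu) \vDash \varphi$ iff $(\sA^{\circ}, \nu) \vDash \varphi^{\circ}$ and likewise for $\sB$ and $\sB^{\circ}$, elementarity transfers from $\sA^{\circ} \preceq \sB^{\circ}$ to give that $\sA$ is an elementary substructure of $\sB$; in particular $\sA \subseteq \sB$.

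Finally I would read off the cardinality conclusions. For $\s \in \Sasa{<}$, we have $\s^{\sA} \subseteq \s^{\sB}$ and $|\s^{\sB}| = |P_{\s}^{\sB^{\circ}}| = |\s^{\sA}| < \aleph_{0}$, hence $\s^{\sB} = \s^{\sA}$. For $\s \in \Sasa{\geq}$, $\s^{\sB} \supseteq \s^{\sA}$ gives $|\s^{\sB}| \ge \aleph_{0}$, while $\s^{\sB} \subseteq \sB^{\circ}$ gives $|\s^{\sB}| \le \kappa$. For the last clause, $\kappa = |\sB^{\circ}| = \sum_{\s \in \S_{\Sigma}} |\s^{\sB}|$; the sorts in $\Sasa{<}$ contribute a fixed finite total and $\kappa \ge \aleph_{0}$, so $\sum_{\s \in \Sasa{\geq}} |\s^{\sB}| = \kappa$, and since this is a finite sum over the nonempty set $\Sasa{\geq}$ of infinite cardinals it equals $\max_{\s \in \Sasa{\geq}} |\s^{\sB}|$, so this maximum is $\kappa$ and is attained at some sort.

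I expect the main obstacle to be organizational rather than conceptual: carrying out the translation $(\cdot)^{\circ}$ on signatures, structures, and formulas cleanly enough that the equivalence $(\sA, \nu) \vDash \varphi$ iff $(\sA^{\circ}, \nu) \vDash \varphi^{\circ}$, the expressibility of the sort-coherence facts in first-order logic, and their preservation under elementary extension are all transparent --- in particular the treatment of function symbols of positive arity, which forces the ``codomain'' axioms into $\Theta$ and an arbitrary extension of each function off its intended domain. The genuinely model-theoretic limitation --- that one cannot control which infinite sort attains $\kappa$, nor the sizes of the other infinite sorts --- is not an obstacle to this proof but shows up as the deliberately weak conclusions; sharper statements require the restricted-signature techniques of \Cref{sec:split}.
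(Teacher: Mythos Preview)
Your proposal is correct and follows essentially the same route as the paper: translate $\sA$ to a single-sorted structure by adding unary sort predicates $P_{\s}$, apply the classical upward L{\"o}wenheim--Skolem theorem to obtain an elementary extension of cardinality $\kappa$, and translate back using the partition sentence $\Forall{x}\bigvee_{\s}P_{\s}(x)$ (which is first-order since $\S_{\Sigma}$ is finite) to ensure every element of the big model lands in some sort. The paper's write-up is terser---it defers the translation details to the proof of \Cref{Lowenheim} and appeals to elementary equivalence rather than spelling out the ``exactly $n$ elements'' sentences for the finite sorts---but the argument is the same.
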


As convenient as translation arguments are, 
the above L{\"o}wenheim--Skolem theorems seem unsatisfactory, as they only allow us to choose a single cardinal, rather than one for each sort.

\subsection{Downward, upward, and combined versions}
\label{sec:down}\label{sec:up}\label{sec:med}

The following are generalizations of the downward and upward L{\"o}wenheim--Skolem theorems to many-sorted logic, which are proved by adapting the proofs of the single-sorted case. Notice that we set all infinite domains to the same cardinality, while finite domains preserve their cardinalities.

\begin{restatable}[Downward]{theorem}{generalizedLST}\label{generalized LST}
Fix a first-order many-sorted signature $\Sigma$. Suppose we have a $\Sigma$-structure $\sA$, a cardinal $\kappa$ such that
$\max\{\aleph_{0}, |\Sigma|\}\leq \kappa\leq\min\{|\s^{\sA}| : \s\in \Sasa{\geq}\}$, and sets $A_{\s} \subseteq \s^\sA$ with $|A_\s| \le \kappa$ for each $\s\in \Sasa{\geq}$. Then, there is an elementary substructure $\sB$ of $\sA$ that satisfies $|\s^{\sB}|=\kappa$ and $\s^{\sB}\supseteq A_{\s}$ for every $\s\in \Sasa{\geq}$, and also $\s^{\sB}=\s^{\sA}$ for every $\s\in \Sasa{<}$.
\end{restatable}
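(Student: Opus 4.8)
The plan is to adapt the classical single-sorted downward Löwenheim–Skolem construction, using the Tarski–Vaught test (\Cref{TV test}) as the closure condition, but being careful to close up simultaneously across all sorts in $\Sasa{\geq}$ while leaving the finite sorts untouched. First I would fix, for each sort $\s \in \Sasa{\geq}$, a starting set $A_\s^0 := A_\s$; for each $\s \in \Sasa{<}$, I set $A_\s^0 := \s^\sA$ (we are forced to keep all finite domains whole, which is harmless since $|\s^\sA| < \aleph_0 \le \kappa$). The goal is to build an increasing chain of families $(A_\s^k)_{\s}$, $k < \omega$, so that the union is closed under all function symbols and satisfies the Tarski–Vaught witness condition, while each $|A_\s^k|$ stays bounded by $\kappa$.

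The key steps, in order: (1) At stage $k$, for every $\Sigma$-formula $\varphi(\overarrow{x}, v)$ with $v$ of some sort $\s$, and every assignment $\nu$ of the variables $\overarrow{x}$ into the current sets $A^k_\bullet$, if $(\sA,\nu) \vDash \Exists{v}\varphi$, pick one witness $a \in \s^\sA$ and throw it into $A^{k+1}_\s$. Also, for every function symbol $f$ of arity $(\s_1,\dots,\s_n,\s)$ and every tuple of arguments drawn from the current sets, put the value $f^\sA(\cdots)$ into $A^{k+1}_\s$. (2) Bound the cardinalities: since $|\Sigma| \le \kappa$ and there are at most $|\Sigma| + \aleph_0 \le \kappa$ many formulas, and at each stage the number of assignments into a family each of whose components has size $\le \kappa$ is at most $\kappa$ (using $\kappa \ge \aleph_0$ and finite arities), we get $|A^{k+1}_\s| \le \kappa$ by induction; taking the union over $k < \omega$ preserves $|\cdot| \le \kappa$. (3) Pad up to exactly $\kappa$: since $\kappa \le |\s^\sA|$ for $\s \in \Sasa{\geq}$, enlarge each $A^\omega_\s$ to a set $B_\s \subseteq \s^\sA$ with $|B_\s| = \kappa$; the padding does not disturb closure if I fold one more closure pass in, or, more cleanly, I interleave the padding into stage $0$ so that $|A^0_\s| = \kappa$ already for $\s \in \Sasa{\geq}$ and the later stages only add $\le \kappa$ more. (4) Let $\sB$ be the induced substructure on $(B_\s)_\s$: it is well-defined because the domains are nonempty (each $A_\s$ is, or we include an element; for $\s\in\Sasa{<}$ it equals $\s^\sA \ne \emptyset$) and closed under all $f^\sA$. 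By construction $\sB$ satisfies the Tarski–Vaught criterion, so by \Cref{TV test} it is an elementary substructure of $\sA$, with $\s^\sB = B_\s$ of size $\kappa$ for $\s \in \Sasa{\geq}$ and $\s^\sB = \s^\sA$ for $\s \in \Sasa{<}$, and $A_\s \subseteq \s^\sB$ throughout.

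The main obstacle I anticipate is the cardinality bookkeeping in the presence of infinitely many sorts: the signature $\Sigma$ may have $|\S_\Sigma|$ infinite, so I must be sure that keeping every finite domain $\s^\sA$ for $\s \in \Sasa{<}$ does not inflate the count of assignments or terms beyond $\kappa$ — this is fine because each individual such domain has size $< \aleph_0 \le \kappa$ and any given formula or function symbol involves only finitely many sorts, but it needs to be stated carefully, and it is exactly why the hypothesis $|\Sigma| \le \kappa$ (which bounds the number of function symbols and formulas) is essential. A secondary subtlety is making sure nonemptiness of $\sB$'s domains is maintained for sorts that do not appear in $\varphi$ or in any function symbol's codomain — I would handle this by seeding $A^0_\s$ with at least one element of $\s^\sA$ for every sort $\s$, which costs nothing. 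Everything else is a routine transcription of the single-sorted argument.
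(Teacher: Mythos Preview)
Your proposal is correct and follows essentially the same approach as the paper. The paper formally derives \Cref{generalized LST} as the special case of \Cref{downwardssplit} for the trivial singleton partition $\Lambda=\{\S_\Sigma\}$, but the proof of \Cref{downwardssplit} is exactly the construction you describe: seed each infinite sort with a set of size $\kappa$ (padding at stage~$0$, as you suggest) and each finite sort with its full domain, iteratively close under Skolem witnesses for existential formulas, bound each stage by $\kappa$ using $\max\{|\Sigma|,\aleph_0\}\le\kappa$, and conclude via the Tarski--Vaught test.
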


\begin{restatable}[Upward]{theorem}{GeneralizationULST}\label{Generalization ULST}\label{LowenheimSkolemUpwards}
Fix a first-order many-sorted signature $\Sigma$. Given a $\Sigma$-structure $\sA$, pick a cardinal $\kappa\geq \max\{|\Sigma|, \aleph_{0},\allowbreak \sup\{ |\s^{\sA}| : \s\in \Sasa{\geq}\}\}$.
Then, there is a $\Sigma$-structure $\sB$ containing $\sA$ as an elementary substructure that satisfies $|\s^{\sB}|=\kappa$ for every $\s\in \Sasa{\geq}$, and also $\s^{\sB} = \s^{\sA}$ for every $\s\in \Sasa{<}$.
\end{restatable}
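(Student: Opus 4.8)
The plan is to imitate the classical proof of the upward L\"owenheim--Skolem theorem in two steps: a compactness argument producing an elementary extension of $\sA$ in which every infinite sort has at least $\kappa$ elements, followed by an application of the downward theorem (\Cref{generalized LST}) to shave each infinite sort down to exactly $\kappa$ while keeping the finite sorts rigid. If $\Sasa{\geq} = \emptyset$ then $\sB = \sA$ already works, so assume $\Sasa{\geq} \neq \emptyset$.

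For the first step I would expand $\Sigma$ twice. Let $\Sigma_A$ be $\Sigma$ with a fresh constant $c_a$ of sort $\s$ added for each $\s \in \S_{\Sigma}$ and $a \in \s^{\sA}$, and let $\Sigma'$ be $\Sigma_A$ with $\kappa$ additional fresh constants $c^{\s}_{\alpha}$ ($\alpha < \kappa$) of sort $\s$ added for each $\s \in \Sasa{\geq}$. Consider the $\Sigma'$-theory
\[
    T = \mathrm{ElDiag}(\sA) \cup \{\neg(c^{\s}_{\alpha} = c^{\s}_{\beta}) : \s \in \Sasa{\geq},\ \alpha < \beta < \kappa\},
\]
where $\mathrm{ElDiag}(\sA)$ is the elementary diagram of $\sA$, that is, the set of all $\Sigma_A$-sentences true in $\sA$ when each $c_a$ denotes $a$. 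Any finite subset of $T$ mentions only finitely many of the constants $c^{\s}_{\alpha}$, and each relevant $\s^{\sA}$ is infinite, so such a subset is satisfied by $\sA$ with those constants interpreted as distinct elements of the appropriate sorts; by \Cref{compact}, $T$ has a model $\sC$. By the many-sorted version of the method of diagrams, $a \mapsto c_a^{\sC}$ is an elementary embedding of $\sA$ into the $\Sigma$-reduct of $\sC$, and I identify $\sA$ with its image, so that $\sA$ is an elementary substructure of $\sC$ and $\s^{\sA} \subseteq \s^{\sC}$ for every $\s$. Since $\mathrm{ElDiag}(\sA)$ contains, for each finite sort $\s$, the sentence asserting that $\s$ has exactly $|\s^{\sA}|$ elements, we get $\s^{\sC} = \s^{\sA}$ for every $\s \in \Sasa{<}$; in particular $\sC$ has exactly the same infinite sorts as $\sA$. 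Finally, as $\sC$ satisfies every inequality in $T$, we have $|\s^{\sC}| \geq \kappa$ for each $\s \in \Sasa{\geq}$.

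For the second step I would apply \Cref{generalized LST} to the $\Sigma'$-structure $\sC$, with the cardinal $\kappa$ and the sets $A_{\s} = \s^{\sA}$ for $\s \in \Sasa{\geq}$. Its hypotheses hold: $\kappa \leq \min\{|\s^{\sC}| : \s \in \Sasa{\geq}\}$ by the previous paragraph; $|A_{\s}| = |\s^{\sA}| \leq \kappa$ directly from the hypothesis on $\kappa$; and $\max\{\aleph_0, |\Sigma'|\} \leq \kappa$, since $|\Sigma_A| \leq |\Sigma| + \sum_{\s \in \S_{\Sigma}} |\s^{\sA}| \leq \kappa$ (a finite sort contributes less than $\aleph_0 \leq \kappa$, an infinite sort at most $\kappa$, and $|\S_{\Sigma}| \leq |\Sigma| \leq \kappa$), while $\Sigma'$ adds at most $|\S_{\Sigma}| \cdot \kappa = \kappa$ further constants. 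The theorem then gives an elementary substructure $\sB'$ of $\sC$ with $|\s^{\sB'}| = \kappa$ and $\s^{\sB'} \supseteq \s^{\sA}$ for every $\s \in \Sasa{\geq}$, and $\s^{\sB'} = \s^{\sC} = \s^{\sA}$ for every $\s \in \Sasa{<}$. Hence $\s^{\sA} \subseteq \s^{\sB'} \subseteq \s^{\sC}$ for every $\s$, so $\sA$ is a substructure of $\sB'$; and since $\sA$ and $\sB'$ are both elementary substructures of $\sC$, the standard sandwich argument (evaluate any formula with parameters from $\sA$ first inside $\sC$) shows that $\sA$ is an elementary substructure of $\sB'$. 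Taking $\sB$ to be the $\Sigma$-reduct of $\sB'$ completes the proof.

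I expect the only subtle point to be the bookkeeping that keeps the finite sorts rigidly fixed: one must use the \emph{full} elementary diagram rather than just the atomic diagram, so that $\s^{\sC} = \s^{\sA}$ is forced for finite $\s$ already at the compactness stage, and then note that the downward step preserves this. Intuitively the new constants $c^{\s}_{\alpha}$ can only enlarge the infinite sorts in which they are declared and never ``spill over'' into a finite sort, because every constant carries a fixed sort; but this deserves to be stated carefully. The only other thing to check is the cardinal arithmetic $|\Sigma'| \leq \kappa$, needed for \Cref{generalized LST} to apply.
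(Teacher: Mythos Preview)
Your proof is correct and follows essentially the same approach as the paper: the paper derives \Cref{Generalization ULST} as the trivial-partition case of \Cref{upwardssplit}, whose proof is exactly the two-step ``elementary diagram plus new constants, then compactness, then apply the downward theorem'' argument you give. The only cosmetic difference is that, in the final step, the paper uses the diagram constants to exhibit $\sA$ as an elementary substructure of $\sB$ directly, whereas you use the sets $A_\s = \s^\sA$ together with the sandwich argument; both are standard and equivalent.
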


\Cref{generalized LST,Generalization ULST} can be combined to yield yet another variant of the L{\"o}wenheim--Skolem theorem, which may be called the combined version.

\begin{restatable}[Combined]{corollary}{middleLST}\label{middleLST}
    Fix a many-sorted signature $\Sigma$. Given a $\Sigma$-structure $\sA$, pick a cardinal $\kappa\geq\max\{|\Sigma|, \aleph_{0}\}$. Then, there is a $\Sigma$-structure $\sB$ elementarily equivalent to $\sA$ with $|\s^{\sB}|=\kappa$ for every $\s\in \Sasa{\geq}$, and $\s^{\sB} = \s^{\sA}$ for $\s\in \Sasa{<}$.
\end{restatable}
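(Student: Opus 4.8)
The plan is to derive the combined version by chaining the upward theorem and the downward theorem already established, using that elementary equivalence is transitive. If $\Sasa{\geq}=\emptyset$, then every sort of $\sA$ is finite, the requirement on infinite sorts is vacuous, and $\sB=\sA$ already works; so assume from now on that $\Sasa{\geq}\neq\emptyset$.

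First I would inflate all infinite sorts to a common, sufficiently large size. Let $\mu=\max\{\kappa,\ \sup\{|\s^{\sA}| : \s\in\Sasa{\geq}\}\}$. Since $\kappa\geq\max\{|\Sigma|,\aleph_{0}\}$, we have $\mu\geq\max\{|\Sigma|,\aleph_{0},\sup\{|\s^{\sA}| : \s\in\Sasa{\geq}\}\}$, so \Cref{Generalization ULST} applies to $\sA$ with the cardinal $\mu$ and produces a $\Sigma$-structure $\sAp$ containing $\sA$ as an elementary substructure, with $|\s^{\sAp}|=\mu$ for every $\s\in\Sasa{\geq}$ and $\s^{\sAp}=\s^{\sA}$ for every $\s\in\Sasa{<}$. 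In particular $\sAp$ is elementarily equivalent to $\sA$, and $\sAp$ interprets exactly the sorts of $\Sasa{\geq}$ as infinite (each now of size $\mu\geq\aleph_{0}$) and exactly those of $\Sasa{<}$ as finite.

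Next I would shrink back down. Apply \Cref{generalized LST} to $\sAp$ with the cardinal $\kappa$ and $A_{\s}=\emptyset$ for every sort: the hypotheses are met, since $\max\{\aleph_{0},|\Sigma|\}\leq\kappa$ by assumption and $\kappa\leq\mu$, which is the common size in $\sAp$ of all its infinite sorts, so $\kappa\leq\min\{|\s^{\sAp}| : \s\in\Sasa{\geq}\}$. This yields an elementary substructure $\sB$ of $\sAp$ with $|\s^{\sB}|=\kappa$ for every $\s\in\Sasa{\geq}$ and $\s^{\sB}=\s^{\sAp}=\s^{\sA}$ for every $\s\in\Sasa{<}$. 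Then $\sB$ is elementarily equivalent to $\sAp$, which is elementarily equivalent to $\sA$, so $\sB$ is elementarily equivalent to $\sA$ and has the required cardinalities.

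The only real obstacle — and the reason for the detour through $\sAp$ — is that $\kappa$ may lie strictly between $\min\{|\s^{\sA}| : \s\in\Sasa{\geq}\}$ and $\sup\{|\s^{\sA}| : \s\in\Sasa{\geq}\}$, in which case neither \Cref{generalized LST} (which needs $\kappa$ at most the minimum infinite size) nor \Cref{Generalization ULST} (which needs $\kappa$ at least the supremum) applies directly; inflating everything to the common size $\mu\geq\kappa$ first is exactly what legalizes the subsequent downward step. The one bookkeeping point to keep straight is that $\Sasa{\geq}$ and $\Sasa{<}$ are defined relative to $\sA$, so one must verify (as above) that $\sAp$ partitions its sorts into infinite and finite ones in the same way; this holds because $\mu\geq\aleph_{0}$ and the finite sorts are left untouched.
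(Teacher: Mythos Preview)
Your proof is correct. It differs from the paper's only in the order of the two steps: the paper first applies the downward theorem to shrink every infinite sort to the minimum admissible size $\max\{\aleph_{0},|\Sigma|\}$, and then applies the upward theorem to inflate them all to $\kappa$; you instead first inflate all infinite sorts to a common ceiling $\mu\geq\kappa$ and then shrink down to $\kappa$. Both routes solve the same obstruction you identified---that $\kappa$ may lie strictly between the smallest and largest infinite sort of $\sA$---by passing through an intermediate structure in which all infinite sorts have a common size, so that a single application of the remaining theorem finishes. Your version needs the auxiliary cardinal $\mu$ and the explicit check that $\Sasa{\geq}$ is unchanged in $\sAp$ (which you supply); the paper's version avoids $\mu$ because the intermediate size $\max\{\aleph_{0},|\Sigma|\}$ is automatically below $\kappa$. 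Neither approach is materially simpler than the other.
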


\begin{wrapfigure}[12]{r}{.5\textwidth}
\centering
\begin{tikzpicture}[scale=.85, every mark/.append style={draw=white}, mark size=2.4pt]
\begin{axis}[
    xmin=-0.1, xmax=1.3,
    ymin=0, ymax=1,
    axis lines=center,
    axis on top=true,
    axis y line=none,
    domain=0:1,
    clip=false,
    xtick={0, 0.1, 0.4, 0.6, 0.9, 1.2},
    xticklabels={$\s_{1}$, $\s_{2}$, $\s_{n}$, $\s^{\prime}_{1}$, $\s^{\prime}_{i}$,$\s^{\prime}_{m}$},
    extra x ticks={0.25, 0.75, 1.05},
    extra x tick style={tick style={draw=none}},
    extra x tick labels={$\cdots$, $\cdots$, $\cdots$}
    ]
    \addplot[mark=none, black, thick] coordinates {(-0.1,0.25) (1.3,0.25)};
        \addplot[mark=none, black, thick] coordinates {(-0.1,0.435) (1.3,0.435)};
    \addplot[mark=none, black, thick, dotted] coordinates {(0.0,0) (0.0,0.2)};
    \addplot[mark=none, black, thick, dotted] coordinates {(0.4,0) (0.4,0.125)};
    \addplot[mark=none, black, thick, dotted] coordinates {(0.1,0) (0.1,0.075)};
    \addplot[mark=none, black, thick, dotted] coordinates {(0.6,0) (0.6,0.435)};
    \addplot[mark=none, black, thick, dotted] coordinates {(0.9,0) (0.9,0.435)};
    \addplot[mark=none, black, thick, dotted] coordinates {(1.2,0) (1.2,0.435)};
    \addplot[mark=none, red, thick, dotted] coordinates {(1.2,0.435) (1.2,0.475)};
    \node at (axis cs:-0.1, 0.285){$\aleph_{0}$};
    \node at (axis cs:-0.1, 0.4){$\kappa$};
    \addplot[only marks] 
table {
 0.6 0.435
 0.9 0.435
 1.2 0.435
};
\addplot[only marks, rotated halfcircle=-90]
table {
 0.0 0.2
  0.1 0.075
 0.4 0.125
};
\addplot[only marks, mark=*,mark options={red}] 
table {
 0.6 0.375
 0.9 0.4
 1.2 0.475
};
\end{axis}
\end{tikzpicture}
\caption{Illustration of \Cref{middleLST}.}
\label{fig:illmid}
\end{wrapfigure}

We illustrate \Cref{middleLST} in \Cref{fig:illmid}.
In black, we represent the cardinalities of the resulting structure, and in red, those of the original one. 
When they coincide, we use marks split between the two colors. 
This representation shows a set of sorts in the horizontal axis, and the heights of the marks represent the cardinalities of the respective domains. 
We clearly separate cardinals larger and smaller than $\aleph_{0}$ with a rule. 
Assume, without loss of generality, that initially $\s_1\dots\s_n$ have finite 
cardinalities and $\s^{\prime}_{1}$ has the least and $\s^{\prime}_{m}$ the greatest 
infinite cardinality.\footnote{For greater clarity, the diagram only depicts the cases where there are finitely many sorts and the signature is countable.} 
\Cref{middleLST} allows us to pick
an infinite cardinal $\kappa$ in between
the least and greatest infinite cardinalities,
and set all infinite cardinlaities in the
interpretation to $\kappa$.

The above theorems require that the desired cardinalities of the infinite sorts are all equal. The following example shows that this limitation is necessary.

\begin{example}
\label{Failure of LST}
Take the signature $\Sigma$ with sorts $S=\{\s_{1}, \s_{2}\}$, no predicates, and only one function $f$ of arity $(\s_{1},\s_{2})$. Take the $\Sigma$-structure $\sA$ with:  $\s_{1}^{\sA}$ and $\s_{2}^{\sA}$ of cardinality $\aleph_{1}$, and $f^{\sA}$ a bijection. It is then true that $\sA \vDash \varphi_{\inj} \land \varphi_{\sur}$, where $\varphi_{\inj}=\Forall{x : \s_1}\Forall{y : \s_1}\big[[f(x)=f(y)]\rightarrow[x=y]\big]$ and $\varphi_{\sur}=\Forall{u : \s_2}\Exists{x : \s_1}[f(x)=u]$, codifying that $f$ is injective and surjective respectively. Notice then that, although $\max\{|\Sigma|, \aleph_{0}\}=\aleph_{0}$, there cannot be an elementary substructure $\sB$ of $\sA$ with $|\s_{1}^{\sB}|=\aleph_{0}$ and $|\s_{2}^{\sB}|=\aleph_{1}$: for if $\sB\vDash\varphi_{\inj}\wedge\varphi_{\sur}$, $f^{\sB}$ must be a bijection between $\s_{1}^{\sB}$ and $\s_{2}^{\sB}$. A similar argument shows that the corresponding
generalization of the upwards theorem fails as well.
\end{example}

\subsection{A stronger result for split signatures}
\label{sec:split}

\Cref{Failure of LST} relies on ``mixing sorts'' by using a function symbol with arities spanning different sorts. We can state stronger versions of the many-sorted L{\"o}wenheim--Skolem theorems when such mixing of sorts is restricted.

\begin{definition}
 A signature $\Sigma$ is said to be {\em split by $\Lambda$ into a family of signatures $\{\Sigma_{\lambda} : \lambda\in\Lambda\}$} if $\Lambda$ is a partition of $\S_{\Sigma}$, $\S_{\Sigma_{\lambda}}=\lambda$ for each $\lambda\in\Lambda$, $\F_{\Sigma}=\bigcup_{\lambda\in\Lambda}\F_{\Sigma_{\lambda}}$, and $\P_{\Sigma}=\bigcup_{\lambda\in\Lambda}\P_{\Sigma_{\lambda}}$. If $\Sigma$ is split by $\Lambda$ and each $\lambda\in\Lambda$ is a singleton, then we say that $\Sigma$ is {\em completely split by $\Lambda$}.
\end{definition}

If $\Sigma$ is split by $\Lambda$, then the function/predicate symbols of $\Sigma_{\lambda}$ must be disjoint from $\Sigma_{\lambda'}$ for $\lambda \neq \lambda'$. 
Given a partition $\Lambda$ of $\S_{\Sigma}$ and $\lambda\in\Lambda$, let $\Sasa{\geq}(\lambda)=\Sasa{\geq}\cap\lambda$. 
We state the downward, upward, and combined 
theorems for split signatures.

\begin{restatable}[Downward]{theorem}{downwardsplitLS}\label{downwardssplit}
Fix a first-order many-sorted signature $\Sigma$ split by $\Lambda$. Suppose we have a $\Sigma$-structure $\sA$, a cardinal $\kappa_\lambda$ such that $\max\{\aleph_{0}, |\Sigma_{\lambda}|\}\leq \kappa_{\lambda}\leq\min\{|\s^{\sA}| : \s\in \Sasa{\geq}(\lambda)\}$ for each $\lambda \in \Lambda$, and sets $A_{\s} \subseteq \s^\sA$ with $|A_\s| \le \kappa_\lambda$ for each $\s\in \Sasa{\geq}(\lambda)$. Then, there is an elementary substructure $\sB$ of $\sA$ that satisfies $|\s^{\sB}|=\kappa_\lambda$ and $\s^{\sB}\supseteq A_{\s}$ for $\s\in \Sasa{\geq}(\lambda)$, and $\s^{\sB}=\s^{\sA}$ for $\s\in \Sasa{<}$.
\end{restatable}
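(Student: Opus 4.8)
\textbf{Proof proposal for \Cref{downwardssplit}.} The plan is to reduce the split case to the non-split downward theorem (\Cref{generalized LST}) applied separately to each piece of the partition, then glue the resulting elementary substructures back together. Since $\Sigma$ is split by $\Lambda$, a $\Sigma$-structure $\sA$ decomposes into a family of $\Sigma_\lambda$-structures $\sA_\lambda$ (the reduct of $\sA$ to $\Sigma_\lambda$), because no function or predicate symbol mixes sorts from distinct $\lambda$; conversely, any family of $\Sigma_\lambda$-structures sharing nothing (the signatures $\Sigma_\lambda$ are pairwise disjoint in their non-sort symbols, and the sorts are partitioned) assembles into a unique $\Sigma$-structure. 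First I would make this decomposition/recomposition precise as a lemma: a substructure $\sB$ of $\sA$ is elementary if and only if each $\sB_\lambda$ is an elementary substructure of $\sA_\lambda$. The forward direction is immediate; the backward direction follows because every $\Sigma$-formula, after prenexing, only quantifies over variables of sorts living in a single $\lambda$ at a time within each atomic subformula, so satisfaction in $\sB$ can be checked ``$\lambda$ by $\lambda$'' — more carefully, one proves by induction on formula structure that $(\sB,\nu)\vDash\psi \iff (\sA,\nu)\vDash\psi$ using that each atomic formula is a $\Sigma_\lambda$-formula for some $\lambda$ and each quantifier ranges over a single sort; the Tarski--Vaught criterion (\Cref{TV test}) applied within each $\Sigma_\lambda$ packages this cleanly.

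Granting that lemma, the construction is: for each $\lambda \in \Lambda$, apply \Cref{generalized LST} to the $\Sigma_\lambda$-structure $\sA_\lambda$ with the cardinal $\kappa_\lambda$ and the sets $A_\s$ for $\s \in \Sasa{\geq}(\lambda)$. The hypotheses transfer: $\max\{\aleph_0, |\Sigma_\lambda|\} \le \kappa_\lambda \le \min\{|\s^{\sA}| : \s \in \Sasa{\geq}(\lambda)\}$ is exactly what \Cref{generalized LST} needs for $\sA_\lambda$, since $\Sasa[\lambda]{\geq}$ (the infinite sorts of $\sA_\lambda$) equals $\Sasa{\geq}(\lambda)$, and $|A_\s| \le \kappa_\lambda$ holds by assumption. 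This yields an elementary substructure $\sB_\lambda$ of $\sA_\lambda$ with $|\s^{\sB_\lambda}| = \kappa_\lambda$ and $\s^{\sB_\lambda} \supseteq A_\s$ for $\s \in \Sasa{\geq}(\lambda)$, and $\s^{\sB_\lambda} = \s^{\sA_\lambda}$ for $\s \in \Sasa{<} \cap \lambda$. Now recompose: let $\sB$ be the unique $\Sigma$-structure whose $\Sigma_\lambda$-reduct is $\sB_\lambda$ for each $\lambda$. By the gluing lemma, $\sB$ is an elementary substructure of $\sA$, and reading off the sort cardinalities gives exactly the conclusion: $|\s^\sB| = \kappa_\lambda$ and $\s^\sB \supseteq A_\s$ for $\s \in \Sasa{\geq}(\lambda)$, and $\s^\sB = \s^\sA$ for $\s \in \Sasa{<} = \bigcup_\lambda (\Sasa{<} \cap \lambda)$.

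The main obstacle I anticipate is the gluing lemma — specifically, verifying that an arbitrary $\Sigma$-formula can be ``localized'' to the pieces. The subtlety is that a single formula $\psi$ may contain atomic subformulas from several different $\Sigma_\lambda$'s simultaneously and quantifiers over sorts in different blocks, so the decomposition is not literally formula-by-formula but is proved by induction on formula complexity, handling Boolean connectives transparently and quantifiers via the single-sortedness of each quantifier together with the fact that the domain $\s^{\sB}$ of a quantified sort $\s \in \lambda$ is precisely $\s^{\sB_\lambda}$ and the relevant witnesses for $\Sigma_\lambda$-atoms only involve the $\sB_\lambda$ part. One clean way to organize this: show that $\sB_\lambda \preceq \sA_\lambda$ for all $\lambda$ implies $\sB \preceq \sA$ by checking the Tarski--Vaught condition of \Cref{TV test} for $\sB \subseteq \sA$ directly, reducing a witness for $\Exists{v:\s}\varphi(\overarrow{x},v)$ in $\sA$ (with $\s \in \lambda$) to a witness in $\sA_\lambda$ for a suitably chosen $\Sigma_\lambda$-formula, but one must be careful that $\varphi$ itself is not a $\Sigma_\lambda$-formula. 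This is really the same induction, and it is routine but needs to be stated carefully; everything else is bookkeeping over $\Lambda$.
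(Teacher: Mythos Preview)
Your proposal is correct in outline, but it takes a different route from the paper and carries one organizational hazard worth flagging.

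\textbf{Circularity with \Cref{generalized LST}.} In this paper, \Cref{generalized LST} is not proved independently: it is obtained as the special case of \Cref{downwardssplit} where $\Lambda$ is the trivial one-block partition. So invoking \Cref{generalized LST} inside the proof of \Cref{downwardssplit} is circular as the paper is organized. This is not fatal --- you can of course prove \Cref{generalized LST} on its own by the standard Skolem-hull argument --- but you should say so explicitly rather than cite it.

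\textbf{Comparison of approaches.} The paper does not decompose $\sA$ into reducts $\sA_\lambda$ and then glue. Instead it runs the Skolem-hull construction directly on $\sA$, but at each sort $\s\in\lambda$ closes only under Skolem functions for $\Sigma_\lambda$-formulas; this is what keeps $|\s^\sB|$ bounded by $\kappa_\lambda$ rather than by $\sup_\mu \kappa_\mu$. The Tarski--Vaught verification then needs exactly the localization you identify as the ``main obstacle'': the paper handles it via a separate lemma (\Cref{GDNF}) showing that every $\Sigma$-formula in a split signature is equivalent to a formula in generalized disjunctive $\Lambda$-normal form, i.e., a disjunction of conjunctions $\bigwedge_j \varphi_j$ with each $\varphi_j$ a $\Sigma_{\lambda_j}$-formula. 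With GDNF in hand, an existential $\Exists{v{:}\s}\varphi$ with $\s\in\lambda$ rewrites so that the only conjunct containing $v$ is a $\Sigma_\lambda$-formula, and the witness comes from the $\Sigma_\lambda$-Skolem closure. Your gluing lemma ($\sB\preceq\sA$ iff $\sB_\lambda\preceq\sA_\lambda$ for all $\lambda$) is equivalent in content to this GDNF lemma; the induction you sketch will not go through at the quantifier step without first separating the $\Sigma_\lambda$-parts of the matrix, which is precisely what GDNF does. So the two proofs share the same essential idea, packaged differently: yours is more modular (reuse the one-block theorem, then glue), the paper's is a single direct construction. Either way, the real work is the formula-localization lemma.
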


\begin{restatable}[Upward]{theorem}{upwardsplitLS}\label{upwardssplit}
Suppose $\Sigma$ is split by $\Lambda$. Given a $\Sigma$-structure $\sA$, pick a cardinal 
$\kappa_{\lambda}\geq \max\{|\Sigma_{\lambda}|, \aleph_{0}, \sup\{|\s^{\sA}| : \s\in \Sasa{\geq}(\lambda)\}\}$ for each $\lambda \in \Lambda$. Then, there is a $\Sigma$-structure $\sB$ containing $\sA$ as an elementary substructure that satisfies $|\s^{\sB}|=\kappa_\lambda$ for $\s\in \Sasa{\geq}(\lambda)$, and $\s^{\sB} = \s^{\sA}$ for $\s\in \Sasa{<}$.
\end{restatable}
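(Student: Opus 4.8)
The plan is to reduce to the non-split upward theorem (\Cref{Generalization ULST}) applied componentwise, then reassemble the components back into a single $\Sigma$-structure. Since $\Sigma$ is split by $\Lambda$, each signature $\Sigma_\lambda$ has no function or predicate symbols relating sorts in $\lambda$ to sorts outside $\lambda$; intuitively, the structure $\sA$ decomposes as an ``independent product'' over the blocks of $\Lambda$. Concretely, for each $\lambda \in \Lambda$ form the $\Sigma_\lambda$-reduct $\sA_\lambda$ of $\sA$, which is a genuine $\Sigma_\lambda$-structure since all sorts occurring in arities of $\Sigma_\lambda$-symbols lie in $\lambda = \S_{\Sigma_\lambda}$. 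Note $\Sasa{\geq}(\lambda)$ is exactly $\S^{\sA_\lambda}_{\geq \aleph_0}$ (cardinalities don't change under reduct), so the hypothesis on $\kappa_\lambda$ is precisely what \Cref{Generalization ULST} wants for $\sA_\lambda$: $\kappa_\lambda \geq \max\{|\Sigma_\lambda|, \aleph_0, \sup\{|\s^{\sA_\lambda}| : \s \in \S^{\sA_\lambda}_{\geq\aleph_0}\}\}$.

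Next I would apply \Cref{Generalization ULST} to each $\sA_\lambda$ to obtain a $\Sigma_\lambda$-structure $\sB_\lambda$ containing $\sA_\lambda$ as an elementary substructure, with $|\s^{\sB_\lambda}| = \kappa_\lambda$ for $\s \in \S^{\sA_\lambda}_{\geq\aleph_0}$ and $\s^{\sB_\lambda} = \s^{\sA_\lambda}$ for $\s \in \S^{\sA_\lambda}_{<\aleph_0}$. Then I would define the combined $\Sigma$-structure $\sB$ by declaring $\s^{\sB} = \s^{\sB_\lambda}$ whenever $\s \in \lambda$, interpreting each $f \in \F_{\Sigma_\lambda}$ as $f^{\sB_\lambda}$ and each $P \in \P_{\Sigma_\lambda}$ as $P^{\sB_\lambda}$ (this is well-defined because $\F_\Sigma = \bigcup_\lambda \F_{\Sigma_\lambda}$ and $\P_\Sigma = \bigcup_\lambda \P_{\Sigma_\lambda}$ are disjoint unions). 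By construction $\sB$ contains $\sA$ as a substructure and has the desired cardinalities: $|\s^{\sB}| = \kappa_\lambda$ for $\s \in \Sasa{\geq}(\lambda)$ and $\s^{\sB} = \s^{\sA}$ for $\s \in \Sasa{<}$ (since $\Sasa{<} = \bigcup_\lambda \S^{\sA_\lambda}_{<\aleph_0}$).

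The one genuine step is verifying that $\sA$ is an \emph{elementary} substructure of $\sB$, not merely a substructure. The key observation is that every $\Sigma$-formula $\varphi$ is, after renaming, a Boolean combination / quantifier nesting built from atomic formulas each of which lives entirely within a single block: an atomic $\Sigma$-formula uses one symbol from some $\F_{\Sigma_\lambda} \cup \P_{\Sigma_\lambda}$ together with $=_\s$ for $\s \in \lambda$, hence mentions only variables of sorts in that $\lambda$. Thus one can write $\varphi$ in prenex-like form and, by induction on structure, reduce satisfaction of $\varphi$ in $\sA$ (resp.\ $\sB$) to a Boolean combination of statements ``$\sA_\lambda \vDash \psi_\lambda[\nu_\lambda]$'' for formulas $\psi_\lambda$ over $\Sigma_\lambda$ and assignments $\nu_\lambda$ into $\sA$ (which take values in sorts of $\lambda$). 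Since each $\sA_\lambda \preceq \sB_\lambda$, these componentwise statements transfer, and therefore $(\sA, \nu) \vDash \varphi \iff (\sB, \nu) \vDash \varphi$ for all $\varphi$ and all assignments $\nu$ over $\sA$. I expect this inductive decomposition — carefully handling quantifiers, whose bound variables can always be chosen within the block of the matrix's atoms — to be the main technical obstacle; everything else is bookkeeping. (Alternatively, one can bypass the induction by invoking the Tarski--Vaught test, \Cref{TV test}: given $(\sB,\nu) \vDash \Exists{v : \s} \chi(\overarrow{x}, v)$ with $\s \in \lambda$, the witness lies in $\s^{\sB} = \s^{\sB_\lambda}$, and after observing that $\chi$ only constrains the $\lambda$-block one can pull the witness down into $\sA$ using $\sA_\lambda \preceq \sB_\lambda$; this still requires the block-decomposition of formulas but packages it more cleanly.)
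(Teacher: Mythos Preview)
Your argument is correct and takes a genuinely different route from the paper. The paper proves \Cref{upwardssplit} by the classical compactness-plus-downward method: it names all elements of $\sA$, adds $\kappa_\lambda$ fresh constants per infinite sort together with their inequalities, invokes compactness to obtain a model $\sC$ (which may be too large), and then applies the \emph{downward} split theorem, \Cref{downwardssplit}, to trim $\sC$ to the exact sizes. Your approach instead reduces to the non-split upward theorem, \Cref{Generalization ULST}, applied block-by-block, and then reassembles; elementarity of $\sA$ in $\sB$ follows from $\sA_\lambda \preceq \sB_\lambda$ together with the fact that every $\Sigma$-formula is equivalent to a Boolean combination of $\Sigma_{\lambda}$-formulas --- this is precisely \Cref{GDNF} in the paper, which you gesture at but do not name. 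Once you have \Cref{GDNF}, your inductive (or Tarski--Vaught) verification goes through cleanly.

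Two remarks. First, in the paper's logical dependency graph \Cref{Generalization ULST} is \emph{derived} from \Cref{upwardssplit} (via the trivial splitting into a single block), so within that framework your reduction is circular; of course \Cref{Generalization ULST} also admits a direct proof by the standard method, so this is a matter of organization rather than a gap. Second, your approach is more modular --- it isolates the combinatorics of the split signature in \Cref{GDNF} and otherwise treats the blocks as independent --- whereas the paper's approach keeps the structure $\sA$ in one piece throughout and pushes the block-level reasoning into the downward step. Both ultimately rely on \Cref{GDNF} (the paper uses it inside the proof of \Cref{downwardssplit}), so the essential content is the same; the packaging differs.
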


\begin{restatable}[Combined]{corollary}{middlesplitLS}\label{middlesplit}
    Suppose $\Sigma$ is split by $\Lambda$. Given a $\Sigma$-structure $\sA$, pick a cardinal $\kappa_{\lambda}\geq\max\{|\Sigma_{\lambda}|, \aleph_{0}\}$ for each $\lambda \in \Lambda$. Then, there is a $\Sigma$-structure $\sB$ elementarily equivalent to $\sA$ with $|\s^{\sB}|=\kappa_\lambda$ for every $\s\in \Sasa{\geq}(\lambda)$, and also $\s^{\sB} = \s^{\sA}$ for every $\s\in \Sasa{<}$.
\end{restatable}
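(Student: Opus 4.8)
The plan is to derive \Cref{middlesplit} from the split-signature downward and upward theorems, \Cref{downwardssplit} and \Cref{upwardssplit}, in exactly the way \Cref{middleLST} would be derived from \Cref{generalized LST} and \Cref{Generalization ULST}: first blow every infinite sort up past its target cardinality using the upward theorem, then cut every infinite sort back down to exactly its target using the downward theorem. The finite sorts never move in either step, so they are preserved, and elementary equivalence is inherited along both the elementary extension and the elementary substructure.

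Concretely, I would proceed as follows. First, for each $\lambda \in \Lambda$ set
\[
    \kappa'_{\lambda} = \max\bigl\{\,|\Sigma_{\lambda}|,\ \aleph_{0},\ \kappa_{\lambda},\ \sup\{|\s^{\sA}| : \s \in \Sasa{\geq}(\lambda)\}\,\bigr\},
\]
which meets the hypothesis of \Cref{upwardssplit}; applying that theorem yields a $\Sigma$-structure $\sA'$ having $\sA$ as an elementary substructure, with $|\s^{\sA'}| = \kappa'_{\lambda}$ for every $\s \in \Sasa{\geq}(\lambda)$ and $\s^{\sA'} = \s^{\sA}$ for every $\s \in \Sasa{<}$. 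Because $\sA$ and $\sA'$ agree on the finite sorts, the partition of $\S_{\Sigma}$ into finite and infinite sorts---and hence each $\Sasa{\geq}(\lambda)$---is the same in $\sA'$ as in $\sA$. Second, I would apply \Cref{downwardssplit} to $\sA'$ with the original cardinals $\kappa_{\lambda}$: the lower bound $\max\{\aleph_{0},|\Sigma_{\lambda}|\} \le \kappa_{\lambda}$ is the hypothesis of \Cref{middlesplit}, and the upper bound $\kappa_{\lambda} \le \min\{|\s^{\sA'}| : \s \in \Sasa{\geq}(\lambda)\}$ holds because every such sort has cardinality exactly $\kappa'_{\lambda} \ge \kappa_{\lambda}$ in $\sA'$ (and the constraint is vacuous when $\Sasa{\geq}(\lambda)$ is empty). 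This gives an elementary substructure $\sB$ of $\sA'$ with $|\s^{\sB}| = \kappa_{\lambda}$ for $\s \in \Sasa{\geq}(\lambda)$ and $\s^{\sB} = \s^{\sA'} = \s^{\sA}$ for $\s \in \Sasa{<}$. Finally, since both $\sA$ and $\sB$ are elementary substructures of $\sA'$, we get $\sB \equiv \sA' \equiv \sA$, which is the conclusion.

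I do not expect a genuine obstacle here; the content is entirely in \Cref{upwardssplit} and \Cref{downwardssplit}, and what remains is bookkeeping. The two points that need a moment's care are (i) checking that moving from $\sA$ to $\sA'$ does not alter which sorts are finite, so that ``$\Sasa{\geq}(\lambda)$'' is unambiguous when the downward step is applied, and (ii) the degenerate cases---a block $\lambda$ with no infinite sorts, or one in which $\kappa_{\lambda}$ already equals the common infinite cardinality $\kappa'_{\lambda}$---where one or both of the two invocations acts trivially but the side-conditions still need to check out.
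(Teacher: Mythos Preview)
Your proof is correct and follows essentially the same two-step strategy as the paper, composing \Cref{upwardssplit} and \Cref{downwardssplit}; the only difference is that you apply the upward step first and then cut down, whereas the paper first applies \Cref{downwardssplit} with $\theta_\lambda = \max\{\aleph_0,|\Sigma_\lambda|\}$ and then applies \Cref{upwardssplit} with the target $\kappa_\lambda$.
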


\begin{wrapfigure}[10]{r}{.5\textwidth}
\vspace{-3em}
\centering
\begin{tikzpicture}[scale=0.85, every mark/.append style={draw=white}, mark size=2.4pt]
\begin{axis}[
    xmin=-0.1, xmax=1.8,
    ymin=0, ymax=1,
    axis lines=center,
    axis on top=true,
    axis y line=none,
    domain=0:1,
    clip=false,
    xtick={0, 0.1, 0.4, 0.6, 0.7, 1.0, 1.3, 1.6, 1.7},
    xticklabels={$\s_{1}$, $\s_{2}$, $\s_{n}$, $\s^{\prime}_{1}$, $\s^{\prime\prime}_{1}$,$\s^{\prime}_{i}$,$\s^{\prime\prime}_{j}$,$\s^{\prime}_{m}$,$\s^{\prime\prime}_{m}$},
    extra x ticks={0.25, 0.85, 1.15, 1.45},
    extra x tick style={tick style={draw=none}},
    extra x tick labels={$\cdots$, $\cdots$, $\cdots$, $\cdots$}
    ]
    \addplot[mark=none, black, thick] coordinates {(-0.1,0.25) (1.8,0.25)};
    \addplot[mark=none, black, thick] coordinates {(-0.1,0.44) (1.8,0.44)};
    \addplot[mark=none, black, thick] coordinates {(-0.1,0.505) (1.8,0.505)};
    \node at (axis cs:-0.1, 0.285){$\aleph_{0}$};
    \node at (axis cs:-0.1, 0.405){$\kappa^{\prime}$};
    \node at (axis cs:-0.1, 0.54){$\kappa^{\prime\prime}$};
    \addplot[mark=none, black, thick, dotted] coordinates {(0.0,0) (0.0,0.2)};
    \addplot[mark=none, black, thick, dotted] coordinates {(0.1,0) (0.1,0.075)};
    \addplot[mark=none, black, thick, dotted] coordinates {(1.0,0) (1.0,0.44)};
    \addplot[mark=none, black, thick, dotted] coordinates {(1.6,0) (1.6,0.44)};
    \addplot[mark=none, red, thick, dotted] coordinates {(1.6,0.44) (1.6,0.475)};
    \addplot[mark=none, black, thick, dotted] coordinates {(0.4,0) (0.4,0.125)};
    \addplot[mark=none, black, thick, dotted] coordinates {(0.6,0) (0.6,0.44)};
    \addplot[mark=none, black, thick, dotted] coordinates {(0.7,0) (0.7,0.505)};
    \addplot[mark=none, black, thick, dotted] coordinates {(1.3,0) (1.3,0.505)};
    \addplot[mark=none, black, thick, dotted] coordinates {(1.7,0) (1.7,0.505)};
    \addplot[mark=none, red, thick, dotted] coordinates {(1.7,0.505) (1.7,0.545)};
    \addplot[only marks] 
table {
 0.6 0.44
 0.7 0.505
 1.0 0.44
 1.3 0.505
 1.6 0.44
 1.7 0.505
};
\addplot[only marks, rotated halfcircle=-90]
table {
 0.0 0.2
  0.1 0.075
 0.4 0.125
};
\addplot[only marks, mark=*,mark options={red}] 
table {
 0.6 0.375
 0.7 0.445
 1.0 0.4
 1.3 0.465
 1.6 0.475
 1.7 0.545
};
\end{axis}
\end{tikzpicture}
\caption{Illustration of \Cref{middlesplit}.}
\label{fig:illmidsplit}
\end{wrapfigure}

 
\Cref{middlesplit} is illustrated in
\Cref{fig:illmidsplit}.
We add sorts $S^{\prime\prime}=\{\s^{\prime\prime}_{1},\ldots,\s^{\prime\prime}_{m}\}$, and assume 
our signature is split into $\Sigma_{\lambda_{1}}$ and $\Sigma_{\lambda_{2}}$, where $\Sasa{\geq}(\lambda_{1})=\{\s^{\prime}_{1},\ldots,\s^{\prime}_{m}\}$ and $\Sasa{\geq}(\lambda_{2})=S^{\prime\prime}$ 
(the sorts with finite cardinalities can belong to either).
Then, $\kappa^{\prime}$ is the cardinal associated with $\Sigma_{\lambda_{1}}$, and $\kappa^{\prime\prime}$ with $\Sigma_{\lambda_{2}}$.
Thus, we are able
to choose a cardinality for each class of sorts.

\subsection{An application: the {\L}o\'s--Vaught test}\label{application}\label{sec:los-vaught}

We describe an application of our L{\"o}wenheim--Skolem theorems 
for theory-completeness: the {\L}o\'s--Vaught test.
This is
particularly relevant to SMT, as if a complete theory $\T$ has a decidable set of axioms, then it is decidable whether $\vdash_{\T} \varphi$ ~\cite[Lemma~2.2.8]{marker2002}. 
%
The single-sorted {\L}o\'s--Vaught is the following.

\begin{definition}
    Let $\Sigma$ be a signature and $\kappa$ a function from $\S_{\Sigma}$ to the class of cardinals. A $\Sigma$-theory $\T$ is \emph{$\kappa$-categorical} if it has exactly one model $\sA$ (up to isomorphism) with the property that $|\s^{\sA}| = \kappa(\s)$ for every $\s\in \S_{\Sigma}$. If there is only one sort $\s \in \S_{\Sigma}$, we abuse notation by using $\kappa$ to denote the cardinal $\kappa(\s)$.
\end{definition}

\begin{theorem}[\cite{los1954,vaught1954}] \label{losvaughtunsorted}
    Suppose $\Sigma$ is single-sorted and $\T$ is a $\Sigma$-theory with only infinite models. If $\T$ is $\kappa$-categorical for some $\kappa \ge |\Sigma|$, then $\T$ is complete.
\end{theorem}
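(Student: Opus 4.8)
The plan is to prove the single-sorted {\L}o\'s--Vaught test by the classical argument, adapted so that it reads off cleanly and so that we can reuse it when proving the many-sorted version. Concretely, I would argue by contradiction: suppose $\T$ is $\kappa$-categorical for some $\kappa \ge |\Sigma|$, that all models of $\T$ are infinite, but that $\T$ is not complete. Then there is a sentence $\varphi$ with $\not\vdash_{\T}\varphi$ and $\not\vdash_{\T}\neg\varphi$, so both $\T \cup \{\varphi\}$ and $\T \cup \{\neg\varphi\}$ are consistent, and by the compactness theorem (\Cref{compact}) each has a model, say $\sA \vDash \T \cup \{\varphi\}$ and $\sB \vDash \T \cup \{\neg\varphi\}$. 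Since every $\T$-model is infinite, both $\sA$ and $\sB$ are infinite.

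Next I would invoke a L{\"o}wenheim--Skolem argument to replace $\sA$ and $\sB$ by elementarily equivalent models of cardinality exactly $\kappa$. Since $\Sigma$ is single-sorted, $\S_\Sigma$ is a singleton, so $\Sasa{<}$ is empty (the unique sort is infinite in both $\sA$ and $\sB$) and $\Sasa{\geq}$ is the whole signature; thus \Cref{middleLST} applies with the given $\kappa \ge \max\{|\Sigma|,\aleph_0\}$ and yields $\Sigma$-structures $\sA'$ and $\sB'$, elementarily equivalent to $\sA$ and $\sB$ respectively, with $|\s^{\sA'}| = |\s^{\sB'}| = \kappa$. (Alternatively one can cite the downward theorem \Cref{generalized LST} when the original model is already at least $\kappa$, and the upward theorem \Cref{Generalization ULST} otherwise; \Cref{middleLST} packages both cases.) Because $\sA' \equiv \sA$ and $\sB' \equiv \sB$, we have $\sA' \vDash \T \cup \{\varphi\}$ and $\sB' \vDash \T \cup \{\neg\varphi\}$, so $\sA'$ and $\sB'$ are two models of $\T$ of cardinality $\kappa$ that disagree on $\varphi$ and hence are not isomorphic (isomorphic structures satisfy the same sentences). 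This contradicts $\kappa$-categoricity of $\T$, completing the proof.

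The argument is essentially routine; the only points requiring care are (i) checking that the hypotheses of the chosen L{\"o}wenheim--Skolem theorem are met --- in the single-sorted case this reduces to noting $\Sasa{<} = \emptyset$ and $\kappa \ge \max\{|\Sigma|, \aleph_0\}$, which holds since $\kappa \ge |\Sigma|$ and, as the unique sort is interpreted by an infinite set in models of $\T$, we also have $|\Sigma| \ge$ nothing forcing $\kappa < \aleph_0$, so we may harmlessly assume $\kappa \ge \aleph_0$ (a finite $\kappa$ cannot witness categoricity here since all models are infinite); and (ii) being explicit that elementary equivalence transfers satisfaction of $\varphi$ and that non-isomorphic is inferred from disagreement on a sentence. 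I do not anticipate a genuine obstacle; if anything, the subtlety is purely bookkeeping about which L{\"o}wenheim--Skolem variant to cite, and \Cref{middleLST} is the cleanest choice since it handles both the ``shrink'' and ``grow'' directions uniformly.
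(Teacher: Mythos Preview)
Your argument is correct and is precisely the classical proof. Note, however, that the paper does not actually prove \Cref{losvaughtunsorted}: it is stated with a citation to {\L}o\'s and Vaught and used as a black box. That said, your proof mirrors exactly the argument the paper gives for the many-sorted generalization \Cref{losvaught} (contradiction from incompleteness, two models disagreeing on a sentence, then the combined L{\"o}wenheim--Skolem \Cref{middleLST}/\Cref{middlesplit} to force equal cardinality, contradicting categoricity), so your approach is entirely in line with the paper's. One nitpick: the appeal to \Cref{compact} is unnecessary---from $\not\vdash_\T\varphi$ and $\not\vdash_\T\neg\varphi$ you get the two models directly by the definition of $\vdash_\T$ as $\T\vDash$.
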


The {\L}o\'s--Vaught test is quite useful, e.g., for the completeness 
of dense linear orders without endpoints and algebraically closed fields.
We generalize it to many sorts. Translating to one-sorted logic 
and using \Cref{losvaughtunsorted} 
gives us:

\begin{restatable}{corollary}{losvaughttranslation} \label{losvaughttranslation}
Let $\Sigma$ be a signature with $|\S_{\Sigma}|<\aleph_{0}$.
    Suppose $\T$ is a $\Sigma$-theory, all of whose models $\sA$ satisfy $\max\{|\s^{\sA}| : \s \in \S_{\Sigma}\} \ge \aleph_{0}$.  Suppose further that for some cardinal $\kappa \ge |\Sigma|$, $\T$ has exactly one model $\sA$ (up to isomorphism) such that $\max\{|\s^{\sA}| : \s \in \S_{\Sigma}\} = \kappa$. Then, $\T$ is complete.
\end{restatable}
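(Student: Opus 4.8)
The plan is to reduce the many-sorted statement to the single-sorted {\L}o\'s--Vaught test (\Cref{losvaughtunsorted}) via the standard translation from many-sorted to single-sorted logic mentioned in \Cref{Somedifficulties}. Concretely, given the finite-sorted signature $\Sigma$, I would build a single-sorted signature $\Sigma^{\circ}$ obtained by taking a single sort, adding a fresh unary predicate $P_{\s}$ for each $\s \in \S_{\Sigma}$, relativizing each function symbol (turning an $n$-ary function of arity $(\s_1,\dots,\s_n,\s)$ into an $n$-ary function together with axioms forcing it to respect the predicates, using some default value outside the intended domain), and similarly relativizing predicate symbols. Let $\T^{\circ}$ be the theory consisting of the translations of the axioms of $\T$ together with the ``well-formedness'' axioms: every element satisfies exactly one $P_{\s}$ (or at least one, with a chosen scheme), each $P_{\s}$ is nonempty, and the relativized functions land in the right sort. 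There is a standard bijective correspondence (up to isomorphism) between $\Sigma$-structures $\sA$ and $\Sigma^{\circ}$-models of the well-formedness axioms, under which $|\s^{\sA}|$ equals the cardinality of $P_{\s}$ and hence the total carrier of the $\Sigma^{\circ}$-model has cardinality $\sum_{\s \in \S_{\Sigma}} |\s^{\sA}| = \max\{|\s^{\sA}| : \s \in \S_{\Sigma}\}$ when that maximum is infinite (here finiteness of $\S_{\Sigma}$ is used). Moreover $|\Sigma^{\circ}| = |\Sigma|$ up to the additive constant coming from the finitely many new predicates, so $|\Sigma^{\circ}| \le \max\{|\Sigma|, \aleph_0\}$; I would note that if $\Sigma$ is finite then $|\Sigma^{\circ}|$ is finite, and in any case $\kappa \ge |\Sigma| \ge |\Sigma^{\circ}|$ up to the standard convention that $|\Sigma|$ is taken to be at least $\aleph_0$, or I handle the finite case separately.

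With this dictionary in place, the argument is short. The hypothesis that every $\Sigma$-model $\sA$ of $\T$ satisfies $\max\{|\s^{\sA}|:\s\in\S_{\Sigma}\} \ge \aleph_0$ translates to: every $\Sigma^{\circ}$-model of $\T^{\circ}$ is infinite. The hypothesis that, for some $\kappa \ge |\Sigma|$, $\T$ has exactly one $\Sigma$-model (up to isomorphism) with $\max\{|\s^{\sA}|:\s\in\S_{\Sigma}\} = \kappa$ translates, via the correspondence, to: $\T^{\circ}$ has exactly one model (up to isomorphism) of cardinality $\kappa$, i.e.\ $\T^{\circ}$ is $\kappa$-categorical. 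Since $\kappa \ge |\Sigma| \ge |\Sigma^{\circ}|$ (adjusting for the finite additive constant as above), \Cref{losvaughtunsorted} applies and gives that $\T^{\circ}$ is complete. Finally, completeness transfers back: the translation sends $\Sigma$-sentences $\varphi$ to $\Sigma^{\circ}$-sentences $\varphi^{\circ}$ in a way that preserves satisfaction under the structure correspondence and commutes with negation, and $\vdash_{\T^{\circ}} \psi$ for every well-formedness-entailed $\psi$; hence for any $\Sigma$-sentence $\varphi$, either $\vdash_{\T^{\circ}}\varphi^{\circ}$ or $\vdash_{\T^{\circ}}\neg\varphi^{\circ}$, which pulls back to $\vdash_{\T}\varphi$ or $\vdash_{\T}\neg\varphi$. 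Thus $\T$ is complete.

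I expect the main obstacle to be purely bookkeeping around the translation: getting the relativization of function symbols right (the output of a relativized function must be forced into the correct predicate, which requires choosing dummy values and writing the corresponding axioms so that the structure correspondence is genuinely bijective up to isomorphism), and being careful about the cardinality of the translated signature versus $|\Sigma|$ so that the hypothesis $\kappa \ge |\Sigma|$ survives the translation. Since the paper already invokes this translation machinery for \Cref{Lowenheim} and \Cref{translated ULS}, I would cite those same references (\cite{Wang,Enderton,Monk}) for the correspondence and keep the write-up of the translation brief, emphasizing only the two points that matter here: the total carrier cardinality equals the max of the sort cardinalities when that max is infinite (using $|\S_{\Sigma}| < \aleph_0$), and the translation is elementary and negation-commuting so that completeness transfers. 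One genuinely mathematical point worth stating explicitly is why ``exactly one $\Sigma$-model up to isomorphism with maximal sort infinite and equal to $\kappa$'' corresponds exactly to ``$\kappa$-categorical'' for $\T^{\circ}$: an isomorphism of $\Sigma$-structures corresponds to an isomorphism of the associated $\Sigma^{\circ}$-structures and vice versa, so the counts of isomorphism classes match.
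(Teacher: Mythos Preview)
Your approach is essentially the same as the paper's: translate to a single-sorted theory via unary sort predicates, use $|\S_{\Sigma}|<\aleph_0$ to identify the carrier cardinality with $\max\{|\s^{\sA}|\}$, invoke \Cref{losvaughtunsorted}, and pull completeness back along the sentence translation. The only technical difference is that the paper first eliminates function symbols in favour of axiomatized predicates (so that $\F_{\Sigma^\dag}=\emptyset$), whereas you keep them and rely on dummy values; the paper's choice sidesteps exactly the bijectivity-of-correspondence bookkeeping you flag as the main obstacle, but both routes lead to the same argument.
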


This is not the result one would hope for, because it excludes some many-sorted $\kappa$-categorical theories, as the following example demonstrates.

\begin{example} \label{acf0}
    Suppose $\Sigma$ has $S = \{\s_1, \s_2\}$, no predicate symbols, and function symbols 0, 1, $+$, and $\times$, of the expected arities. Let
    $
        \T = \mathsf{ACF_0} \cup \big\{\psi^{\s_2}_{\geq n} : n \in \mathbb{N}\big\},
    $
    where $\mathsf{ACF_0}$ is the theory of algebraically closed fields of characteristic zero (with respect to $\s_1$) and
    $
        \psi^{\s}_{\geq n}=\Exists{x_{1} : \s}\cdots \Exists{x_n : \s} \bigwedge_{1\leq i<j\leq n}\neg(x_{i}=x_{j})
    $,
    which asserts that there are at least $n$ elements of sort $\s$. 
    $\T$ is $\kappa$-categorical, where $\kappa(\s_1) = \aleph_1$ and $\kappa(\s_2) = \aleph_{0}$. But $\T$ is also $\kappa'$-categorical, where $\kappa'(\s_1) = \kappa'(\s_2) = \aleph_1$. Thus, $\T$ has multiple models $\sA$ satisfying $\max\{|\s^{\sA}| : \s \in \S_{\Sigma}\} = \aleph_1$. 
    Similar reasoning holds for other infinite cardinals, 
    so \Cref{losvaughttranslation} does not apply.
\end{example}

For completely split signatures, we prove a more natural {\L}o\'s--Vaught test:

\begin{definition}
    A $\Sigma$-structure $\sA$ is \emph{strongly infinite} if $|\s^{\sA}| \geq \aleph_{0}$ for all $\s\in \S_{\Sigma}$.
\end{definition}

\begin{restatable}{theorem}{losvaught} \label{losvaught}
    Suppose $\Sigma$ is completely split into $\{\Sigma_\s : \s \in \S_{\Sigma}\}$, $\T$ is a $\Sigma$-theory all of whose models are strongly infinite, and $\T$ is $\kappa$-categorical for some function $\kappa$ such that $\kappa(\s) \ge |\Sigma_\s|$ for every $\s \in \S_{\Sigma}$. Then, $\T$ is complete.
\end{restatable}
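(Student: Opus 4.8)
The plan is to mimic the classical Vaught argument: assume $\T$ is not complete, so there is a sentence $\varphi$ with $\T \cup \{\varphi\}$ and $\T \cup \{\neg\varphi\}$ both consistent; pick models $\sA \vDash \T \cup \{\varphi\}$ and $\sB \vDash \T \cup \{\neg\varphi\}$; use a L{\"o}wenheim--Skolem argument to replace $\sA$ and $\sB$ by elementarily equivalent models of the \emph{same} $\kappa$-profile; invoke $\kappa$-categoricity to conclude $\sA \cong \sB$, contradicting $\sA \vDash \varphi$ and $\sB \vDash \neg\varphi$. The only subtlety over the single-sorted case is that we must hit the target cardinal function $\kappa$ on \emph{every} sort simultaneously, and the cardinals $\kappa(\s)$ may differ from sort to sort; this is exactly what the split L{\"o}wenheim--Skolem theorems buy us, since in a completely split signature each sort lives in its own signature $\Sigma_\s$ and cardinalities can be adjusted independently.

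Here are the steps in order. First, since every model of $\T$ is strongly infinite, $\Sasa{\geq} = \S_\Sigma$ for every $\T$-model, so all the side conditions in the split theorems involving $\Sasa{<}$ are vacuous. Second, given any $\T$-model $\sM$, I want to produce an elementarily equivalent $\T$-model $\sM'$ with $|\s^{\sM'}| = \kappa(\s)$ for all $\s$. To do this I apply \Cref{middlesplit} (the combined split version) with the partition $\Lambda = \{\{\s\} : \s \in \S_\Sigma\}$ and the cardinals $\kappa_{\{\s\}} := \kappa(\s)$; the hypothesis $\kappa_{\{\s\}} \ge \max\{|\Sigma_\s|, \aleph_0\}$ holds because $\kappa(\s) \ge |\Sigma_\s|$ by assumption and $\kappa(\s) \ge \aleph_0$ since... well, here I need to be slightly careful: I should note that $\kappa(\s)$ must itself be infinite, which follows because $\T$ is $\kappa$-categorical and has a (strongly infinite) model, so the unique model of profile $\kappa$ has all sorts infinite, forcing $\kappa(\s) \ge \aleph_0$. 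Then \Cref{middlesplit} yields $\sM' \equiv \sM$ with $|\s^{\sM'}| = \kappa(\s)$ for all $\s$, and $\sM' \vDash \T$ since $\T$ is a set of sentences and elementary equivalence preserves sentences. Third, apply this to $\sA$ and to $\sB$ to get $\sA' \equiv \sA$ and $\sB' \equiv \sB$, both $\T$-models of profile exactly $\kappa$. Fourth, $\kappa$-categoricity gives $\sA' \cong \sB'$, hence $\sA' \equiv \sB'$, hence $\sA \equiv \sB$, contradicting $\sA \vDash \varphi$ and $\sB \vDash \neg\varphi$. Therefore $\T$ is complete.

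I expect the main obstacle to be a bookkeeping/soundness check rather than a deep difficulty: namely verifying that \Cref{middlesplit} applies with no hidden constraint, in particular (a) that the "finite sorts" clause is genuinely vacuous here, (b) that $\kappa(\s) \ge \aleph_0$ for every $\s$ is actually forced (otherwise $\T$ could be vacuously $\kappa$-categorical with some finite $\kappa(\s)$ and no strongly infinite model of that profile — one should double-check the definition of $\kappa$-categoricity does not get satisfied vacuously, and indeed if there were no model of profile $\kappa$ at all then "$\T$ is $\kappa$-categorical" would be false, so a model of profile $\kappa$ exists and is strongly infinite, forcing each $\kappa(\s)$ infinite), and (c) that $\kappa$-categoricity of $\T$ together with $\sA', \sB' \vDash \T$ and their matching profile really licenses $\sA' \cong \sB'$ — which is immediate from the definition once we know both are $\T$-models of profile $\kappa$. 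No Ramsey-type or compactness machinery is needed; the real content is packaged inside \Cref{middlesplit}.
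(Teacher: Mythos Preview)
Your proposal is correct and follows essentially the same route as the paper's proof: assume incompleteness, take models of $\T\cup\{\varphi\}$ and $\T\cup\{\neg\varphi\}$, use \Cref{middlesplit} with the complete splitting to force both to the exact $\kappa$-profile, and contradict $\kappa$-categoricity. Your extra care in verifying $\kappa(\s)\ge\aleph_0$ (via the existence of a strongly infinite model of profile $\kappa$) is a detail the paper leaves implicit, so if anything your write-up is slightly more thorough on that point.
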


The assumption that $\Sigma$ is completely split is necessary for \Cref{losvaught}:

\begin{example}
    Let $\Sigma$ have sorts $\s_{1}, \s_{2}$,  
    and  function symbol $f$ of arity $(\s_{1},\s_{2})$. 
    Let
     $
        \T = \big\{\psi^{\s_1}_{\geq n} : n \in \mathbb{N}\big\} \cup \big\{\psi^{\s_2}_{\geq n} : n \in \mathbb{N}\big\}\cup \big\{\varphi_{\inj} \lor \Forall{x : \s_1}\Forall{y : \s_1} [f(x)=f(y)]\big\}
     $.
    In $\T$, $\s_1,\s_2$ are infinite, and $f$ is injective or 
    constant.  
    $\T$ is $\kappa$-categorical for $\kappa(\s_1) = \aleph_1,\kappa(\s_2) = \aleph_{0}$,
    but not complete, due to
    the sentence
    $\forall x ,y : \s_1 .f(x)=f(y)$.
    This does not contradict \Cref{losvaught}, as $\Sigma$ is not completely split.
\end{example}

\section{Conclusion}\label{conclusion}
We closed the problem of the existence of unicorn theories and discussed applications  to SMT. 
This included a result similar to the L{\"o}wenheim--Skolem theorem,
which inspired us to investigate the adaptation of this theorem to many-sorted logic. 
We also obtained a many-sorted version of the {\L}o\'s--Vaught test.

In future work, we plan to investigate  whether \Cref{thm-smoothness-from-finite-smoothness} can be extended to uncountable signatures. More broadly, we intend to continue studying the relationships among many-sorted model-theoretic properties related to SMT.

\begin{credits}
\subsubsection{\ackname} This work was supported in part by the Stanford Center for Automated Reasoning,
NSF-BSF grant numbers 2110397 (NSF) and 2020704 (BSF),
ISF grant 619/21, and
the Colman-Soref fellowship.
The first author thanks the organizers of the CURIS research program.

\end{credits}



%
%
%
%
\bibliographystyle{splncs04}
\bibliography{bib}

\newgeometry{
left=20mm,
right=20mm,
bottom=22mm,
top=15mm
}
\appendix


\section*{Appendix}

\section{Proof of \Cref{lem-73}}
This lemma was used in \cite{BarTolZoh},
and is explicitly found, with a proof, in its extended
technical report \cite{BTZarxiv},
as Lemma~73.
For completeness, we include its proof
in this appendix.

\lemseventhree*

\begin{proof}
    Let $\T$ be stably infinite and strongly finitely witnessable, both with respect to $S$. Let $\varphi$ be a $\T$-satisfiable quantifier-free formula, $\A$ a $\T$-interpretation satisfying $\varphi$, and $\kappa$ a function from $\Saa{<}\cap S$ to the class of cardinals such that $|\s^{\A}| \leq \kappa(\s) < \aleph_0$ for every $\s \in \Saa{<}\cap S$. We have $\A \vDash \Exists{\overarrow{w}} \wit(\varphi)$, where $\overarrow{w}=\vars(\wit(\varphi))\setminus\vars(\varphi)$. Hence, by modifying the interpretation of the variables in $\overarrow{w}$, we obtain a $\T$-interpretation $\A'$ satisfying $\wit(\varphi)$. Let $V = \vars(\wit(\varphi))$, and let $\delta_V$ be the arrangement on $V$ induced by the equalities in $\A'$. Then, $\wit(\varphi) \land \delta_V$ is $\T$-satisfiable, so there exists a $\T$-interpretation $\A''$ satisfying $\wit(\varphi) \land \delta_V$ such that $\s^{\A''}=\vars_{\s}(\wit(\varphi) \land \delta_V)^{\A''}$ for every $\s \in S$. The map from $\s^{\A''}$ to $\s^{\A'}$ given by $x^{\A''} \mapsto x^{\A'}$, where $x \in \vars_\s(\wit(\varphi))$, is well-defined and injective, so we have $|\s^{\A''}| \le |\s^{\A'}| = |\s^{\A}|$ for every $\s \in S$. In particular, $\kappa(\s) - |\s^{\A''}| \ge \kappa(\s) - |\s^{\A}| \ge 0$. For each $\s \in \Saa{<}\cap S$, introduce $\kappa(\s) - |\s^{\A''}|$ fresh variables $W_\s$ of sort $\s$. Let $W = \bigcup_{\s \in \Saa{<}\cap S} W_\s$, and extend the arrangement $\delta_V$ to an arrangement $\delta_{V \cup W}$ by asserting that all variables in $W$ are distinct from each other and other variables in $V$. Since $\T$ is stably infinite, $\wit(\varphi) \land \delta_{V \cup W}$ is $\T$-satisfiable, so there exists a $\T$-interpretation $\B$ satisfying $\wit(\varphi) \land \delta_{V \cup W}$ such that $\s^{\B}=\vars_{\s}(\wit(\varphi) \land \delta_{V \cup W})^{\B}$ for every $\s \in S$. Since $\B$ satisfies $\wit(\varphi)$, it also satisfies $\varphi$. We also have $|\s^{\B}| = |\s^{\A''}| + |W_\s| = \kappa(\s)$ for every $\s \in \Saa{<}\cap S$. Therefore, $\T$ is finitely smooth with respect to $S$.\qed
\end{proof}

\section{Proof of \Cref{lem-ramsey-directed}}
\dirramsey*
\begin{proof}
    Let
    \[
        R^*(k,n,m) = R\left(k^{n^n},n,m+n-1\right).
    \]
    \sloppy For any function $f : X^n \to [k]$, let $f_\rho(x_1, \dots, x_n) = f(x_{\rho(1)}, \dots, x_{\rho(n)})$, where $\rho : [n] \to [n]$ is an arbitrary function. Fix an ordering $\rho_1, \dots, \rho_{n^n}$ on the set of functions from $[n]$ to itself. Then, let $F : \nset{X}{n} \to [k]^{n^n}$ be given by, for $x_1 < \dots < x_n$,
    \[
        F(\{x_1, \dots, x_n\}) = (f_{\rho_1}(x_1, \dots, x_n), \dots, f_{\rho_{n^n}}(x_1, \dots, x_n)).
    \]
    By \Cref{lem-ramsey}, for any totally ordered set $(X, <)$ with $|X| \ge R^*(k,n,m)$, there is a subset $Y' \subseteq X$ with $|Y'| \ge m+n-1$ such that $F$ is constant on $\nset{Y'}{n}$. Let $Y \subseteq Y'$ be the initial $m$ elements of $Y'$ according to the order on $Y'$ inherited from $X$. Let $\overarrow{x}, \overarrow{y} \in Y^n$ with $\overarrow{x} \sim \overarrow{y}$, and let the distinct elements of $\overarrow{x}$ be $x_1 < \dots < x_\ell$ and let those of $\overarrow{y}$ be $y_1 < \dots < y_\ell$ for some $\ell \in [n]$. Add additional elements from $Y'$ to get $\{x_1, \dots, x_n\} \supseteq \{x_1, \dots, x_\ell\}$ and $\{y_1, \dots, y_n\} \supseteq \{y_1, \dots, y_\ell\}$, where $x_1 < \dots < x_n$ and $y_1 < \dots < y_n$. Then, $F(\{x_1, \dots, x_n\}) = F(\{y_1, \dots, y_n\})$, since $\{x_1, \dots, x_n\}, \{y_1, \dots, y_n\} \in \nset{Y'}{n}$. Hence,
    \[
        f_{\rho_i}(x_1, \dots, x_n) = f_{\rho_i}(y_1, \dots, y_n)
    \]
    for all $i \in [n^n]$. In particular, let $\rho_i$ be such that
    \begin{align*}
        \left(x_{\rho_i(1)}, \dots, x_{\rho_i(n)}\right) &= \overarrow{x} \quad\text{and} \\
        \left(y_{\rho_i(1)}, \dots, y_{\rho_i(n)}\right) &= \overarrow{y}.
    \end{align*}
    Then,
    \[
        f(\overarrow{x}) = f_{\rho_i}(x_1, \dots, x_n) = f_{\rho_i}(y_1, \dots, y_n) = f(\overarrow{y}),
    \]
    as desired.\qed
\end{proof}

\section{Proof of \Cref{lem-mult-ramsey-directed}}
\multramsey*
\begin{proof}
    Let $n = n_1 + \dots + n_r$, and let $R^{**}(k,\overarrow{n},m) = R^*(k^r, n, m+1)$. Given functions $f_i : X^{n_i} \to [k]$, let $F : X^n \to [k]^r$ be given by
    \[
        F(\overarrow{x_1}, \dots, \overarrow{x_r}) = (f_1(\overarrow{x_1}), \dots, f_r(\overarrow{x_r})).
    \]
    As proven in \Cref{lem-ramsey-directed}, for any totally ordered set $(X, <)$ with $|X| \ge R^{**}(k,\overarrow{n},m)$, there is a subset $Y' \subseteq X$ with $|Y'| \ge m+1$ such that $F$ is constant on $\sim$-equivalence classes of ${Y'}^n$. Let $y' \in Y'$ be the maximum element according to the order on $Y'$ inherited from $X$, and let $Y = Y' \setminus \{y'\}$. Given some $i \in [r]$, let $\overarrow{x}, \overarrow{y} \in Y^{n_i}$ with $\overarrow{x} \sim \overarrow{y}$. Then,
    \begin{align*}
        &((y')^{\oplus n_1}, \dots, (y')^{\oplus n_{i-1}}, \overarrow{x}, (y')^{\oplus n_{i+1}}, \dots, (y')^{\oplus n_r}) \sim \\
        &((y')^{\oplus n_1}, \dots, (y')^{\oplus n_{i-1}}, \overarrow{y}, (y')^{\oplus n_{i+1}}, \dots, (y')^{\oplus n_r}),
    \end{align*}
    since $y'$ is strictly greater than every element in $\overarrow{x}$ and $\overarrow{y}$. Therefore,
    \begin{align*}
        &F((y')^{\oplus n_1}, \dots, (y')^{\oplus n_{i-1}}, \overarrow{x}, (y')^{\oplus n_{i+1}}, \dots, (y')^{\oplus n_r}) = \\
        &F((y')^{\oplus n_1}, \dots, (y')^{\oplus n_{i-1}}, \overarrow{y}, (y')^{\oplus n_{i+1}}, \dots, (y')^{\oplus n_r}),
    \end{align*}
    so $f_i(\overarrow{x}) = f_i(\overarrow{y})$, as desired.\qed
\end{proof}

\section{Proof of \Cref{lem-consistent}}
\consistent*
\begin{proof}
    By the compactness theorem, it suffices to prove that $\T^* \cup \Gamma'$ is satisfiable for every finite subset $\Gamma' \subseteq \Gamma$. So let $\Gamma'_1 \subseteq \Gamma_1$, $\Gamma'_2 \subseteq \Gamma_2$, and $\Gamma'_3 \subseteq \Gamma_3$ be finite subsets. We will construct a $\T^*$-interpretation $\B'$ such that $\B' \vDash \{\varphi\} \cup \Gamma'_1 \cup \Gamma'_2 \cup \Gamma'_3$. The tricky part is making $\B'$ satisfy $\Gamma'_2$. The strategy is to use \Cref{lem-big-model} to construct a model $\B'$ in which $|\s_{i+1}^{\B'}|$ is very large in terms of $|\s_i^{\B'}|$ for each $i > \ell$. \Cref{lem-mult-ramsey-directed} will ensure that there is some way of interpreting the constants $\{c_{i,\alpha}\}_{\alpha < \kappa(\s_i)}$ so that $\B' \vDash \Gamma'_2$.
    
    For each $i$, let $C_i = \{c_{i,\alpha} : \alpha < \kappa(\s_i);\: c_{i,\alpha} \ \text{appears in} \ \Gamma'_1 \cup \Gamma'_2 \cup \Gamma'_3\}$. Since $|\bigcup_i C_i| < \aleph_0$, there is a maximum natural number $i$ such that $C_i \neq \emptyset$, which we denote $s$. For each $i > \ell$, let
    \[
        T_i = \{t : t \ \text{is a $\Sigma^*$-term of sort $\s_k$ appearing in} \ \Gamma'_2 \ \text{for some} \ k < i\}.
    \]
    Since $T_i \subseteq T_{i+1}$ for each $i$, and each $T_{i}$ is finite, we can enumerate the terms of $\bigcup_i T_i$ so that for each $i$, there is an $r_i$ such that $t_1, \dots, t_{r_i}$ is an enumeration of $T_i$.
    
    Let $\B'$ be a $\T$-interpretation satisfying $\varphi$ obtained according to \Cref{lem-big-model}, where $|\s_i^{\B'}|$ when $i > \ell$ is specified as follows. Suppose inductively that $|\s_k^{\B'}|$ has been determined for all $k < i$. Given a term of the form $t(\overarrow{x_1}, \dots, \overarrow{x_s})$,
    let
    \[
        m_{t,i} = \left|\s_1^{\B'}\right|^{\left|\overarrow{x_1}\right|} \times \dots \times \left|\s_{i-1}^{\B'}\right|^{\left|\overarrow{x_{i-1}}\right|} \times |C_{i+1}|^{\left|\overarrow{x_{i+1}}\right|} \times \dots \times |C_s|^{\left|\overarrow{x_s}\right|},
    \]
    and let $n_{t,i} = |\overarrow{x_i}|$. Let $\overarrow{n_i} = ((n_{t_1, i})^{\oplus m_{t_1,i}}, \dots, (n_{t_{r_i}, i})^{\oplus m_{t_{r_i},i}})$.\footnote{Recall that $(x)^{\oplus n}$ denotes the tuple consisting of $x$ repeated $n$ times.} Then, choose $|\s_i^{\B'}|$ so that
    \[
        \left|\s_i^{\B'}\right| \ge R^{**}\left(\left|\s_{\ell+1}^{\B'}\right| + \dots + \left|\s_{i-1}^{\B'}\right|, \overarrow{n_i}, |C_i|\right),
    \]
    where $R^{**}$ is the function from \Cref{lem-mult-ramsey-directed}.

    Now, we specify how $\B'$ interprets the constants $C_i$. Note that it does not matter how $\B'$ interprets the constants in $\{c_{i,\alpha}\}_{\alpha < \kappa(\s_i)} \setminus C_i$, since these constants do not appear in $\{\varphi\} \cup \Gamma'_1 \cup \Gamma'_2 \cup \Gamma'_3$. Impose an arbitrary total order on each $\s_i^{\B'}$ to be used for the $\sim$ relation. If $i \le \ell$, then interpret the elements of $C_i$ as distinct elements of $\s_i^{\B'}$, which is possible because $|C_i| \le |\{c_{i,\alpha}\}_{\alpha < \kappa(\s_i)}| = \kappa(\s_i) = |\s_i^{\B'}|$. Otherwise, if $i > \ell$, we specify the interpretation of the constants $C_i$ by induction on $s-i$. That is, we specify the interpretation of $C_s$, then that of $C_{s-1}$, and so on. Suppose that the interpretation of the constants $C_j$ has been determined for all $j > i$. Given a term $t \in T_i$, define the following family of functions in $(\s_i^{\B'})^{n_{t,i}} \to \s_{\ell+1}^{\B'} \cup \dots \cup \s_{i-1}^{\B'}$:
    \begin{align*}
        \mathfrak{f}_{t,i} = &\left\{\overarrow{a} \mapsto t^{\B'}\left(\overarrow{a_1}, \dots, \overarrow{a_{i-1}}, \overarrow{a}, (\overarrow{c_{i+1}})^{\B'}, \dots, (\overarrow{c_s})^{\B'}\right) : \right. \\
        &\left. \overarrow{a_k} \in \s_k^{\B'} \ \text{for all} \ k < i;\: \overarrow{c_j} \in (C_j)^* \ \text{for all} \ j > i\right\}.
    \end{align*}
    Observe that $|\mathfrak{f}_{t,i}| = m_{t,i}$. By our choice of $|\s_i^{\B'}|$, we can apply \Cref{lem-mult-ramsey-directed} to the functions $\mathfrak{f}_i \coloneqq \mathfrak{f}_{t_1,i} \cup \dots \cup \mathfrak{f}_{t_{r_i},i}$ to conclude that there is a subset $Y_i \subseteq \s_i^{\B'}$ with $|Y_i| \ge |C_i|$ such that each $f \in \mathfrak{f}_i$ is constant on $\sim$-equivalence classes of $Y_i^n$, where $n$ is the arity of $f$. Then, interpret the constants $C_i$ as distinct elements of $Y_i$ in a way that is compatible with their respective total orders (i.e., $c_{i, \alpha} < c_{i, \beta}$ if and only if $c_{i, \alpha}^{\B'} < c_{i, \beta}^{\B'}$).

    This completes the description of $\B'$. It remains to show that $\B' \vDash \Gamma'_1 \cup \Gamma'_2 \cup \Gamma'_3$.
    
    First, we show that $\B' \vDash \Gamma'_1$. By construction, $\B'$ interprets the constants $C_i$ as distinct elements of $\s_i^{\B'}$ for all $i \in [s]$. Therefore, $\B' \vDash \Gamma'_1$.

    Second, we show that $\B' \vDash \Gamma'_2$. For each $i > \ell$, let
    \begin{align*}
        {\Gamma'_2}^i = &\left\{t\left(\overarrow{c_1}, \dots, \overarrow{c_{i-1}}, \overarrow{b_i}, \overarrow{c_{i+1}}, \dots, \overarrow{c_s}\right) = t\left(\overarrow{c_1}, \dots, \overarrow{c_{i-1}}, \overarrow{d_i}, \overarrow{c_{i+1}}, \dots, \overarrow{c_s}\right) : \right. \\
        &\left. t \in T_i;\: \overarrow{c_k}, \overarrow{b_k}, \overarrow{d_k} \in (C_k)^* \ \text{for all} \ k;\: \overarrow{b_i} \sim \overarrow{d_i}\right\}.
    \end{align*}
    Since $C_i^{\B'} \subseteq Y_i$ and each $f \in \mathfrak{f}_i$ is constant on $\sim$-equivalence classes of $Y^n$, where $n$ is the arity of $f$, we have
    \begin{align*}
        &t^{\B'}\left((\overarrow{c_1})^{\B'}, \dots, (\overarrow{c_{i-1}})^{\B'}, \left(\overarrow{b_i}\right)^{\B'}, (\overarrow{c_{i+1}})^{\B'}, \dots, (\overarrow{c_s})^{\B'}\right) = \\
        &t^{\B'}\left((\overarrow{c_1})^{\B'}, \dots, (\overarrow{c_{i-1}})^{\B'}, \left(\overarrow{d_i}\right)^{\B'}, (\overarrow{c_{i+1}})^{\B'}, \dots, (\overarrow{c_s})^{\B'}\right)
    \end{align*}
    for each $t \in T_i$ whenever $\overarrow{b_i} \sim \overarrow{d_i}$. Thus, $\B' \vDash {\Gamma'_2}^i$ for all $i > \ell$. Now, observe that $\bigcup_{i>\ell} {\Gamma'_2}^i$ entails $\Gamma'_2$. Therefore, $\B' \vDash \Gamma'_2$.

    Finally, we show that $\B' \vDash \Gamma'_3$. Suppose $\Gamma'_3$ contains a sentence of the form
    \[
        \Forall{x \in \s_i} \bigvee_{\alpha < \kappa(\s_i)} x = c_{i,\alpha},
    \]
    where $i \le \ell$. Then, $C_i = \{c_{i, \alpha}\}_{\alpha < \kappa(\s_i)}$, so $|C_i| = \kappa(\s_i) = |\s_i^{\B'}|$. Since $\B'$ interprets the constants $C_i$ distinctly, $C_i^{\B'} = \s_i^{\B'}$. Thus, the sentence above is equivalent to the fact that every element of $\s_i^{\B'}$ is denoted by some constant in $C_i$. Therefore, $\B' \vDash \Gamma'_3$.\qed
\end{proof}

\section{Proof of \Cref{Lowenheim}}\label{a proof of}

\translateddownwardmanysorted*

\begin{proof}
Consider the $\Sigmadag$-structure $\sAdag$ with: domain $\bigcup_{\s\in\S_{\Sigma}}\s^{\sA}$; for every function $f$ of arity $(\s_1, \dots, \s_n, \s)$ (in $\Sigma$), $f^{\sAdag}$ equals $f^{\sA}$ when restricted to $\s_{1}^{\sA}\times\cdots\times\s_{n}^{\sA}$, and is arbitrary otherwise; for every predicate $P$ of arity $(\s_1, \dots, \s_n)$ (in $\Sigma$), $P^{\sAdag}$ equals $P^{\sA}$; and $a\in P_{\s}^{\sAdag}$ iff $a\in\s^{\sA}$. Notice that, because $\sA$ is a $\Sigma$-structure, we get $\sAdag$ satisfies the following additional formulas:
\begin{enumerate}
    \item[I] for every $\s\in\S_{\Sigma}$, $\Exists{x}P_{\s}(x)$;
    \item[II] for any two distinct $\s, \t\in\S_{\Sigma}$, $\Forall{x}(P_{\s}(x)\rightarrow\neg P_{\t}(x))$;
    \item[III] if $\S_{\Sigma}=\{\s_{1},\ldots,\s_{n}\}$, $\Forall{x}P_{\s_{1}}(x)\vee\cdots\vee P_{\s_{n}}(x)$;
    \item[IV] for $f$ of arity $(\s_1, \dots, \s_n, \s)$, $\Forall{x_{1},\cdots, x_{n}}\bigwedge_{i=1}^{n}P_{\s_{i}}(x_{i})\rightarrow P_{\s}(f(x_{1}, \ldots , x_{n}))$ (with some obvious care being necessary if $n=0$).
\end{enumerate}
    Now, $|\Sigma|=|\Sigmadag|$ since $\F_{\Sigmadag}$ must be in bijection with $\F_{\Sigma}$, and $\P_{\Sigmadag}$ with $\S_{\Sigma}\cup\P_{\Sigma}$; and because $\sA$ has an infinite domain and $\kappa\leq\min\{|\s^{\sA}|:\s\in\S_{\Sigma}\}$, $\sAdag$ is infinite and has cardinality greater than $\kappa$. Taking $A=\bigcup_{\s\in\S_{\Sigma}}A_{\s}$, $|A|\leq \kappa$, and we can therefore apply the classical downward L{\"o}wenheim--Skolem to obtain an elementary substructure $\sBdag$ of $\sAdag$ with domain of cardinality $\kappa$, and containing $A$.

    Finally, we define a $\Sigma$-structure $\sB$ by making: $\s^{\sB}=P_{\s}^{\sBdag}$ for every sort $\s$ (these are nonempty and disjoint, given $\sAdag$ satisfies the sets of formulas in $I$ and $II$); for a function $f$ of arity $(\s_1, \dots, \s_n, \s)$, $f^{\sB}$ equals $f^{\sBdag}$ restricted to $\s_{1}^{\sB}\times\cdots\times\s_{n}^{\sB}$ (which is well-defined because $\sAdag$ satisfies the set of formulas in $IV$); and, for a predicate $P$ of arity $(\s_1, \dots, \s_n)$, $P^{\sB}$ equals the intersection of $P^{\sBdag}$ and $\s_{1}^{\sB}\times\cdots\times\s_{n}^{\sB}$, making of $\sB$ a substructure of $\sA$. It it easy to prove that $\sB$ is elementary equivalent to $\sA$, and therefore $\s^{\sB}$ has the same cardinality as $\s^{\sA}$ if the latter is finite (and thus $\s^{\sB}=\s^{\sA}$), and is infinite if the latter is infinite.

    We also get that, since $A$ is contained in $\sBdag$, $A_{\s}$ is contained in $\s^{\sB}$. Finally, $\bigcup_{\s\in\S_{\Sigma}}\s^{\sB}$ equals, given $\sBdag$ satisfies the formula in $III$, the domain of $\sBdag$, meaning $\sum_{\s\in\S_{\Sigma}}|\s^{\sB}|=\kappa$; given $\S_{\Sigma}$ is finite, this means some domain of $\sB$ has cardinality $\kappa$, finishing the proof.\qed
\end{proof}

\section{Proof of \Cref{translated ULS}}

\translatedULS*

\begin{proof}
    Construct the $\Sigmadag$-structure $\sAdag$ as in the proof of \Cref{Lowenheim}, and since the cardinality of the domain of $\sAdag$ is $\max\{|\s^{\sA}|:\s\in\S_{\Sigma}\}$ we can apply the classical upward L{\"o}wenheim--Skolem to obtain a $\Sigmadag$-structure $\sBdag$ with an elementary substructure isomorphic to $\sAdag$, and a domain of cardinality $\kappa$. Furthermore, since $\S_{\Sigma}$ is finite (say it equals $\{\s_{1}, \ldots , \s_{n}\}$), then $\sAdag$ satisfies 
    $\Forall{x}(P_{\s_{1}}(x)\vee\cdots\vee P_{\s_{n}}(x))$,
    and so must $\sBdag$. Translating $\sBdag$ back to a $\Sigma$-structure $\sB$, again as done in the proof of \Cref{Lowenheim}, we obtain that $\sB$ has an elementary substructure isomorphic to $\sA$ (and so $|\s^{\sB}|=|\s^{\sA}|$ for every $\s\in\S_{\Sigma}$ such that $\s^{\sA}$ is finite); and for some $\s\in\S_{\Sigma}$, $\sB$ has cardinality $\kappa$, because every element of $\sBdag$ must be in some domain of $\sB$.\qed
\end{proof}

\section{Proofs of \Cref{generalized LST,Generalization ULST,middleLST}}

These results are obtained as corollaries from
\Cref{downwardssplit,upwardssplit,middlesplit}, whose proofs can be found in the following sections.
The reason being that every signature can be trivially split to a singleton partition.

\section{Proof of \Cref{downwardssplit}}

\begin{definition}
Suppose $\Sigma$ is split by $\Lambda$. A $\Sigma$-formula is said to be a \emph{generalized $\Lambda$-cube} if it is a conjunction $\bigwedge_{i=1}^{n}\varphi_{i}$, where each $\varphi_{i}$ is a $\Sigma_{\lambda_{i}}$-formula, where $\lambda_i\in\Lambda$ and for $i\not=j$, we have $\lambda_i\not=\lambda_j$; similarly, a $\Sigma$-formula is said to be  a \emph{generalized $\Lambda$-clause} if it is a disjunction $\bigvee_{i=1}^{m}\varphi_{i}$, where each $\varphi_{i}$ is a $\Sigma_{\lambda_{i}}$-formula, where $\lambda_i\in\Lambda$ and for $i\not=j$, we have $\lambda_i\not=\lambda_j$.

A $\Sigma$-formula that is a disjunction of generalized $\Lambda$-cubes (respectively, a conjunction of generalized $\Lambda$-clauses) is said to be in generalized disjunctive $\Lambda$-normal form, or $\Lambda$-GDNF (respectively, generalized conjunctive normal $\Lambda$-form, or $\Lambda$-GCNF).\footnote{Whenever $\Lambda$ is clear from context, we will omit it from the nomenclature.
}
\end{definition}


We start with some technical lemmas. 

\begin{restatable}{lemma}{GDNFandDCNF}\label{GDNF=DCNF}
    Suppose $\Sigma$ is split into $\{\Sigma_{\lambda}:\lambda\in\Lambda\}$; then a formula that is equivalent to a formula in GDNF is also equivalent to a formula in GCNF, and vice-versa.
\end{restatable}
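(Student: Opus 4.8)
The plan is to reduce the claim to ordinary Boolean distributivity, using only the fact that $\Sigma_\lambda$-formulas are closed under $\land$ and $\lor$. The two halves of the statement (``GDNF $\Rightarrow$ GCNF'' and ``GCNF $\Rightarrow$ GDNF'') are symmetric under swapping $\land\leftrightarrow\lor$ and cube $\leftrightarrow$ clause, so it suffices to show that every formula in $\Lambda$-GDNF is equivalent to a formula in $\Lambda$-GCNF; the case of a formula merely \emph{equivalent} to one in GDNF then follows by transitivity of $\equiv$, and the reverse implication is obtained dually.

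So let $\psi=\bigvee_{j=1}^m K_j$ be in $\Lambda$-GDNF, with each $K_j=\bigwedge_{i=1}^{n_j}\varphi_i^j$ a generalized $\Lambda$-cube, so that every $\varphi_i^j$ is a $\Sigma_\lambda$-formula for some $\lambda\in\Lambda$. First I would dispose of the degenerate cases: if $m=0$ then $\psi\equiv\bot$, which is the empty generalized clause and hence a (one-clause) GCNF formula; if some $n_j=0$ then $K_j\equiv\top$ and so $\psi\equiv\top$, the empty conjunction of generalized clauses. Otherwise every $n_j\ge 1$, and distributing $\land$ over $\lor$ gives
\[
    \psi \;\equiv\; \bigwedge_{(i_1,\dots,i_m)\in\prod_{j=1}^m[n_j]}\ \bigvee_{j=1}^m\varphi_{i_j}^j.
\]
Each inner disjunction need not yet be a generalized clause, because two of the $\varphi_{i_j}^j$ might be $\Sigma_\lambda$-formulas for the same $\lambda$. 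To repair this, fix a tuple $(i_1,\dots,i_m)$ and, for each $\lambda\in\Lambda$ that occurs, let $D_\lambda$ be the disjunction of all $\varphi_{i_j}^j$ that are $\Sigma_\lambda$-formulas; since $\Sigma_\lambda$-formulas are closed under $\lor$, $D_\lambda$ is again a $\Sigma_\lambda$-formula, and $\bigvee_{j=1}^m\varphi_{i_j}^j\equiv\bigvee_\lambda D_\lambda$ is a generalized $\Lambda$-clause, the $\lambda$'s being pairwise distinct by construction. Hence $\psi$ is equivalent to a conjunction of generalized clauses, i.e.\ to a $\Lambda$-GCNF formula. The dual argument---distribute $\lor$ over $\land$, then merge conjuncts of equal signature into a single $\Sigma_\lambda$-formula using closure under $\land$---turns any $\Lambda$-GCNF formula into an equivalent $\Lambda$-GDNF formula.

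I do not expect a real obstacle: this is the classical CNF/DNF conversion with one extra bookkeeping step, namely collapsing several disjuncts (resp.\ conjuncts) of the same signature $\Sigma_\lambda$ into one. The only points demanding attention are the empty-conjunction/empty-disjunction corner cases and checking that the ``merge by signature'' step leaves the signatures pairwise distinct, which is immediate from the construction. The closure of $\Sigma_\lambda$-formulas under the Boolean connectives is used throughout and is part of the definition of formulas over a signature.
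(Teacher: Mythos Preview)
Your argument is correct and takes a genuinely different route from the paper. The paper proceeds by induction on the total number $n=\sum_i n_i$ of ``generalized literals'' in a GDNF formula $\psi=\bigvee_{i=1}^{m}\bigwedge_{j=1}^{n_i}\varphi^i_j$: at each step it selects a cube with $n_m>1$, peels off one conjunct $\varphi^m_{n_m}$, distributes $\lor$ over that single $\land$, applies the induction hypothesis to the two resulting smaller GDNF formulas, and conjoins the two GCNF outputs. You instead perform the full Boolean distribution in one shot and then merge, within each resulting disjunction, all disjuncts that share a signature $\Sigma_\lambda$ into a single $\Sigma_\lambda$-formula. Your approach is the standard CNF/DNF conversion with one explicit bookkeeping step and is shorter; it also makes transparent a point the paper glosses over, namely that in the case where every $n_i=1$ the disjunction $\bigvee_i\varphi^i_1$ need not literally be a generalized clause (the $\lambda$'s from different cubes may coincide) and must be merged---the paper simply asserts ``$\psi$ is already in GCNF'' there. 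Conversely, the paper's stepwise induction avoids writing down the product $\prod_j[n_j]$ and the associated blow-up, though this is immaterial since only existence of an equivalent GCNF formula is claimed.
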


\begin{proof}
    We prove that if $\varphi$ that is equivalent to a formula in GDNF is also equivalent to a formula in GCNF: the reciprocal has an analogous proof.

    So, suppose that $\varphi$ is equivalent to $\psi=\bigvee_{i=1}^{m}\bigwedge_{j=1}^{n_{i}}\varphi^{i}_{j}$, and define the number of generalized literals in $\psi$ as $n=\sum_{i=1}^{m}n_{i}$: notice that any quantifiers in $\psi$ must be inside one of the $\varphi_{j}^{i}$. We proceed by induction on $n$: if $n=1$, $\psi$ is already in GCNF as well, so there is nothing to prove. Suppose then that the result holds for some $n\geq 1$, and take a generalized cube $\bigwedge_{i=1}^{n_{i}}\varphi^{i}_{j}$ with $n_{i}>1$ (if there are none, again $\psi$ is already in GCNF): without loss of generality, assume that $i=m$, and that $m>1$ (otherwise $\psi$ is again in GCNF, and there is nothing to be done). Then we have that, denoting by $\theta\equiv\theta^{\prime}$ the fact that $\theta$ and $\theta^{\prime}$ are equivalent,
    \[\psi=\bigvee_{i=1}^{m}\bigwedge_{j=1}^{n_{i}}\varphi^{i}_{j}=[\bigvee_{i=1}^{m-1}\bigwedge_{j=1}^{n_{i}}\varphi^{i}_{j}]\vee[(\bigwedge_{j=1}^{n_{m}-1}\varphi^{m}_{j})\wedge\varphi^{m}_{n_{m}}]\equiv[(\bigvee_{i=1}^{m-1}\bigwedge_{j=1}^{n_{i}}\varphi^{i}_{j})\vee(\bigwedge_{j=1}^{n_{m}-1}\varphi^{m}_{j})]\wedge[(\bigvee_{i=1}^{m-1}\bigwedge_{j=1}^{n_{i}}\varphi^{i}_{j})\vee\varphi^{m}_{n_{m}}],\]
    by using the distributivity of disjunction over conjunction. Now, in the second line, the formulas on both sides of the conjunction are in GDNF and have a number of generalized literals strictly less than that of $\psi$, so they are equivalent by induction hypothesis to formulas $\psi_{1}$ and $\psi_{2}$ in GCNF. To summarize, $\psi$ is then equivalent to $\psi_{1}\wedge\psi_{2}$, which is itself in GCNF, and so $\varphi$ is equivalent to a formula in GCNF.\qed
\end{proof}

\begin{restatable}{lemma}{GDNFexists}\label{GDNF}
    If $\Sigma$ is a split signature, each of its formulas is equivalent to a formula in GDNF.
\end{restatable}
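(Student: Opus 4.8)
The plan is to prove the statement by \emph{structural induction} on the $\Sigma$-formula. The fact about split signatures that makes this work is the following observation, which I would establish first: if $\lambda \in \Lambda$ and $\s \in \lambda$, then every $\Sigma$-term of sort $\s$ is already a $\Sigma_\lambda$-term. This is an easy induction on terms: a term of sort $\s$ is either a variable of sort $\s$, or of the form $f(t_1,\dots,t_n)$ where $f$ has result sort $\s$; since $\F_\Sigma = \bigcup_{\lambda' \in \Lambda}\F_{\Sigma_{\lambda'}}$ and the $\F_{\Sigma_{\lambda'}}$ are pairwise disjoint, $f \in \F_{\Sigma_\lambda}$, so all of $f$'s argument sorts lie in $\lambda$ and we recurse. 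Consequently, every atomic $\Sigma$-formula $P(t_1,\dots,t_n)$ — and in particular every equality $t_1 =_\s t_2$ — is a $\Sigma_\lambda$-formula for the unique $\lambda$ with $P \in \P_{\Sigma_\lambda}$ (respectively $\s \in \lambda$). This is a single-conjunct generalized cube, so the base case of the induction is immediate.

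For the inductive step, the Boolean cases go as follows. If $\varphi_1 \equiv \bigvee_i A_i$ and $\varphi_2 \equiv \bigvee_j B_j$ are GDNF representations, then $\varphi_1 \vee \varphi_2$ is already a disjunction of generalized cubes. For $\varphi_1 \wedge \varphi_2$, distribute to get $\bigvee_{i,j}(A_i \wedge B_j)$; each $A_i \wedge B_j$ is a conjunction of $\Sigma_\lambda$-formulas, and after \emph{merging} any two conjuncts carrying the same index $\lambda$ into one (the conjunction of two $\Sigma_\lambda$-formulas is again a $\Sigma_\lambda$-formula) it becomes a genuine generalized cube with pairwise distinct indices. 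For $\neg\psi$, I would \emph{not} push the negation through a GDNF representation of $\psi$ directly; instead, use \Cref{GDNF=DCNF} to replace $\psi$ by an equivalent formula $\bigwedge_i \bigvee_j \varphi^i_j$ in GCNF, and then $\neg\psi \equiv \bigvee_i \bigwedge_j \neg\varphi^i_j$ by De Morgan; since $\neg\varphi^i_j$ is still a $\Sigma_{\lambda^i_j}$-formula and the indices within each former clause are distinct, this is in GDNF. Implication and the biconditional reduce to these cases.

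For the quantifiers, suppose the bound variable $x$ has sort $\s \in \lambda$ and $\psi \equiv \bigvee_i A_i$ with $A_i = \bigwedge_k \varphi^i_k$ a generalized cube. Then $\exists x.\psi \equiv \bigvee_i \exists x.A_i$. Within $A_i$ at most one conjunct has index $\lambda$; every other conjunct is a $\Sigma_{\lambda'}$-formula with $\lambda' \neq \lambda$, hence has no free variable of sort $\s$, so $x$ does not occur in it. Therefore $\exists x.A_i$ is equivalent to $A_i$ with its unique $\Sigma_\lambda$-conjunct $\varphi$ (if present) replaced by $\exists x.\varphi$, which is again a $\Sigma_\lambda$-formula — so $\exists x.A_i$ is a generalized cube (using nonemptiness of the sorts in the degenerate case where $x$ occurs in no conjunct). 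The universal quantifier is then handled by rewriting $\forall x.\psi$ as $\neg\exists x.\neg\psi$ and applying the negation and existential cases.

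The point that needs the most care — and really the only nontrivial bookkeeping — is keeping the conjuncts of each generalized cube pinned to pairwise \emph{distinct} classes $\lambda \in \Lambda$; this is exactly why the conjunction case needs the merging step and why the negation case is cleanest when routed through \Cref{GDNF=DCNF}. Beyond that, the argument is routine propositional and first-order manipulation, all resting on the observation that in a split signature each subformula involves only the sorts of a single $\lambda$.
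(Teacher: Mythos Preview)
Your proof is correct, and it uses the same essential ingredients as the paper's argument (the observation that atomic formulas live in a single $\Sigma_\lambda$, the merging of same-$\lambda$ conjuncts, and the appeal to \Cref{GDNF=DCNF}), but the induction is organized differently. The paper first passes to prenex normal form and then inducts on the number of quantifiers: the quantifier-free base case is handled by ordinary DNF together with the grouping-by-$\lambda$ step, and in the inductive step the outermost quantifier is peeled off---$\exists$ is pushed into the unique $\Sigma_\lambda$-conjunct exactly as you do, while $\forall$ is handled by invoking \Cref{GDNF=DCNF} to pass to GCNF and then distributing $\forall$ over the conjunction. You instead do a direct structural induction on the formula, treating the Boolean connectives explicitly (in particular routing $\neg$ through \Cref{GDNF=DCNF}) and reducing $\forall$ to $\neg\exists\neg$. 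Your route avoids the detour through PNF and makes the closure under each connective explicit; the paper's route packages the Boolean reasoning into a single DNF step at the base. Both rely on \Cref{GDNF=DCNF} at the same place in spirit---to handle the quantifier or connective that does not commute with disjunction.
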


\begin{proof}
    It is well known that any first-order $\Sigma$-formula $\varphi$ is equivalent to a formula in prenex normal form (PNF), that is, to a formula 
    \[\Q{1}{x_{1}}\cdots\Q{n}{x_{n}}\:\varphi,\]
    where $Q_{i}\in\{\forall, \exists\}$ and $\varphi$ is quantifier free; without loss of generality, let us assume that all $\Sigma$-formulas are in PNF, and we write the proof by induction on $n$.

    If $n=0$, $\varphi$ is itself quantifier-free: writing $\varphi$ in disjunctive normal form (DNF), and using the commutativity of conjunction to place literals of the same signature $\Sigma_{\lambda}$ together (notice every literal on $\Sigma$ is a literal of one of the $\Sigma_{\lambda}$ because $\Sigma$ is split), we obtain $\varphi$ is equivalent to a formula in GDNF.

    Now, assume the result holds for $n\geq 1$, and then it is true that
    \[
        \varphi=\Q{1}{x_{1}}\cdots\Q{n+1}{x_{n+1}}\:\varphi=\Q{1}{x_{1}}\:\big(\Q{2}{x_{2}}\cdots\Q{n+1}{x_{n+1}}\:\varphi\big)=\Q{1}{x_{1}}\:\bigvee_{i=1}^{p}\bigwedge_{j=1}^{q}\varphi_{j}^{i}
    \]
    by induction hypothesis, where $\varphi_{j}^{i}$ are $\Sigma_{\lambda_{j}}$-formulas.\footnote{Notice that if a generalized cube of a formula in GDNF does not include formulas of exactly the same signatures as the other generalized cubes, we can always add tautologies to make the treatment of that formula more uniform.} Now, we have two cases to consider.
    \begin{enumerate}
        \item If $Q_{1}=\exists$ and $x_{1}$ is of sort, without loss of generality, in $\Sigma_{\lambda_{q}}$, we have that 
        \[\varphi=\Exists{x_{1}}\bigvee_{i=1}^{p}\bigwedge_{j=1}^{q}\varphi_{j}^{i}=\bigvee_{i=1}^{p}\Exists{x_{1}}\bigwedge_{j=1}^{q}\varphi_{j}^{i}=\bigvee_{i=1}^{p}\Exists{x_{1}}\varphi_{q}^{i}\wedge\bigwedge_{j=1}^{q-1}\varphi_{j}^{i},\]
        since $\varphi_{j}^{i}$ for $1\leq j\leq q-1$ cannot have the variable $x_{1}$. Of course, we are then done.

        \item Now, suppose $Q_{1}=\forall$. Because of \Cref{GDNF=DCNF} and our induction hypothesis, we know that we can rewrite the formula $\Q{2}{x_{2}}\cdots\Q{n+1}{x_{n+1}}\:\varphi$ as $\bigwedge_{i=1}^{P}\bigvee_{j=1}^{Q}\psi_{j}^{i}$ for some $\Sigma_{\lambda_{j}}$-formulas $\psi_{j}^{i}$. Then, assuming again without loss of generality that $x_{1}$ is of sort in $\Sigma_{\lambda_{Q}}$,
        \[\varphi=\Forall{x_{1}}\bigwedge_{i=1}^{P}\bigvee_{j=1}^{Q}\psi_{j}^{i}=\bigwedge_{i=1}^{P}\Forall{x_{1}}\bigvee_{j=1}^{Q}\psi_{j}^{i}=\bigwedge_{i=1}^{P}\Forall{x_{1}}\varphi_{Q}^{i}\vee\bigvee_{j=1}^{Q-1}\varphi_{j}^{i},\]
        which is in GCNF. Once again applying \Cref{GDNF=DCNF}, we obtain $\varphi$ may be written in GDNF, as we wanted to prove. \qed
    \end{enumerate}
\end{proof}

\downwardsplitLS*

\begin{proof}
    Given a formula $\varphi$ and free variable $x \in \vars(\varphi)$, let $f^{x}_{\varphi}$ be a Skolem function, meaning that $(\sA, \nu)\vDash\Exists{x}\varphi$ implies $(\sA, \mu)\vDash\varphi$, where $\mu$ differs from $\nu$ at most on $x$, $\vars(\varphi)=\{x, y_{1}, \ldots , y_{n}\}$, and $\mu(x)=f^{x}_{\varphi}(\nu(y_{1}), \ldots , \nu(y_{n}))$. Skolem functions can be proven to exist as in single-sorted logic. For each $\s\in \Sasa{\geq}(\lambda)$, we take a set $A^0_\s$ such that $A_{\s}\subseteq A_{\s}^{0}\subseteq \s^{\sA}$ and $|A^0_\s| = \kappa_\lambda$, which is possible given that the $A_{\s}$ in the statement of the theorem have cardinality at most $\kappa_{\lambda}$; if $\s\in \Sasa{<}(\lambda)$, we make $A_{\s}^{0}=\s^{\sA}$. We then define, for every $m\in\mathbb{N}$, if $\s$ is a sort of $\Sigma_{\lambda}$,
    \begin{multline*}
A^{m+1}_{\s}=A^{m}_{\s}\cup\{f_{\varphi}^{x}(a_{1},\ldots, a_{n}) :\text{$\varphi$ is a $\Sigma_{\lambda}$-formula, $\vars(\varphi)=$}\\\{x, y_{1},\ldots, y_{n}\}, \text{$x$ is of sort $\s$, $y_{i}$ is of sort $\s_{i}$, and $a_{i}\in A^{m}_{\s_{i}}$}\}.
\end{multline*}

We define a $\Sigma$-structure $\sB$, where $\s^{\sB}=\bigcup_{n\in\mathbb{N}}A_{\s}^{m}$; $f^{\sB}$, with $f$ of arity $(\s_1, \dots, \s_n, \s)$, equals $f^{\sA}$ restricted to $\s_{1}^{\sB}\times\cdots\times\s_{n}^{\sB}$; and $P^{\sB}$, with $P$ of arity $(\s_1, \dots, \s_n)$, equals $P^{\sA}\cap(\s_{1}^{\sB}\times\cdots\times\s_{n}^{\sB})$. We claim that if $\s \in \Sigma_{\lambda}$, then $|\s^{\sB}|=\kappa_{\lambda}$. Since $\kappa_\lambda \geq \aleph_0$, it suffices to show that $|A^{m}_{\s}|=\kappa_{\lambda}$ for each $m\in\mathbb{N}$. This is true for $m=0$ by hypothesis. The cardinality of the set $\For{\Sigma_{\lambda}}$ of formulas on the signature $\Sigma_{\lambda}$ is at most $\max\{|\Sigma_{\lambda}|, \aleph_{0}\}\leq \kappa_{\lambda}$. Thus,
\[
\kappa_\lambda = |A_{\s}^{m}| \leq |A^{m+1}_{\s}| \leq |A_{\s}^{m}|+\sum_{n\in\mathbb{N}}|\For{\Sigma_{\lambda}}|\times|A_{\s}^{m}|^{n} \leq \kappa_{\lambda},
\]
so $|A^{m+1}_{\s}| = \kappa_\lambda$, as desired.

Now, it remains for us to show that $\sB$ is an elementary substructure of $\sA$, clearly being a substructure. We wish to apply \Cref{TV test}, so take a formula $\varphi$, a free variable $x$ in $\varphi$ (the other variables of $\varphi$ being $y_{1}$ through $y_{n}$, of sorts, respectively, $\s_{1}$ through $\s_{n}$), and suppose that $(\sA, \nu)\vDash\Exists{x}\varphi$. Because of \Cref{GDNF}, we know we can write $\varphi$ as $\bigvee_{i=1}^{p}\bigwedge_{j=1}^{q}\varphi_{j}^{i}$, where $\varphi_{j}^{i}$ is a $\Sigma_{\lambda_{j}}$-formula: without loss of generality, suppose $x$ is of sort $\s$ in $\lambda_{q}$; then 
\[\Exists{x}\varphi=\bigvee_{i=1}^{p}\Exists{x}\varphi^{i}_{q}\wedge\bigwedge_{j=1}^{q-1}\varphi_{j}^{i},\]
and thus $(\sA,\nu)\vDash\Exists{x}\varphi^{i}_{q}\wedge\bigwedge_{j=1}^{q-1}\varphi_{j}^{i}$ for some $1\leq i\leq p$. Then the function $f^{x}_{\varphi^{i}_{q}}$, calculated on $\nu(y_{1})\in A_{\s}^{m_{1}}, \ldots , \nu(y_{n})\in A_{\s}^{m_{n}}$, returns a witness for $\Exists{x}\varphi^{i}_{q}$ in $A_{\s}^{\max\{m_{1},\ldots, m_{n}\}+1}$, a subset of $\sB$, which finishes our proof.\qed
\end{proof}

\section{Proof of \Cref{upwardssplit}}
\upwardsplitLS*
\begin{proof}
    For each $\s\in\S_{\Sigma}$, let $P_\s$ be a set of new constants of sort $\s$, where $|P_\s|$ has cardinality $|\s^{\sA}|$. For each $\lambda \in \Lambda$ and $\s\in \Sasa{\geq}(\lambda)$, let $Q_\s$ be a set of new constants of sort $\s$, where $|Q_\s|$ has cardinality $\kappa_\lambda$. Let $\Sigma_P$ be the signature obtained by adding the sets $P_\s$ to $\Sigma$, and let $\Sigma_Q$ be the signature obtained by adding the sets $Q_\s$ to $\Sigma_P$. We extend $\sA$ into a $\Sigma_{P}$-structure $\shA$ by interpreting the sorts, functions, and predicates in $\Sigma$ in the same way as in $\sA$, and defining $b_{\s}^{\shA}$, for $b_{\s}\in P_{\s}$, so that the mapping $b_{\s}\in P_{\s}\mapsto b_{\s}^{\shA}\in \s^{\shA}$ is bijective.

    Now, let $\Gamma$ be the set of all $\Sigma_{P}$-sentences satisfied by $\shA$, and let
    \[
        \overline{\Gamma}=\Gamma\cup\{\neg(c_{\s}=d_{\s}) : \text{$c_{\s}, d_{\s}\in Q_{\s}$, with $c_{\s}\neq d_{\s}$, for all $\s\in \Sasa{\geq}$}\}.
    \]
    By \Cref{compact}, $\overline{\Gamma}$ is consistent. Let $\sC$ be a model of $\overline{\Gamma}$. Then, for every $\s\in \Sasa{<}$, we have $|\s^\sC| = |\s^\sA|$, and for every $\s\in \Sasa{\geq}(\lambda)$, we have $|\s^\sC| \geq \kappa_\lambda$. 

    By \Cref{downwardssplit}, there is an elementary substructure $\sB$ of $\sC$ with $|\s^\sB| = \kappa_\lambda$ for every $\s\in \Sasa{\geq}(\lambda)$ and $\s^{\sB}=\s^{\sC}$ for each $\s\in \Sasa{<}$. Since $\sB \vDash \Gamma$, $\sA$ is isomorphic to an elementary substructure of $\sB$, where the isomorphism is given by $b_{\s}^{\shA}\mapsto b_{\s}^{\sB}$ for $b_\s \in P_\s$. Identifying these elements of $\sB$ with the corresponding elements of $\sA$ completes the proof.\qed
\end{proof}

\section{Proof of \Cref{middlesplit}}
\middlesplitLS*
\begin{proof}
First, apply \Cref{downwardssplit} to $\sA$ with the cardinals $\theta_\lambda = \max\{\aleph_{0}, |\Sigma_{\lambda}|\}$ to get a structure $\sC$ elementarily equivalent to $\sA$ satisfying $|\s^{\sC}|=\max\{\aleph_{0}, |\Sigma_{\lambda}|\}$ for every $\s\in \Sasa{\geq}(\lambda)$ and $\s^{\sC} = \s^{\sA}$ for every $\s\in \Sasa{<}$. Then, apply \Cref{upwardssplit} to $\sC$ with the cardinals $\kappa_\lambda$ to get a structure $\sB$ elementarily equivalent to $\sA$ with $|\s^{\sB}|=\kappa_\lambda$ for every $\s\in \Sasa{\geq}(\lambda)$ and $\s^{\sB} = \s^{\sA}$ for every $\s\in \Sasa{<}$.\qed
\end{proof}

\section{Proof of \Cref{losvaughttranslation}}
\losvaughttranslation*

\begin{proof}
    The translation of $\T$ into a single-sorted $\Sigma^\dag$-theory $\T^\dag$ is described in \cite{quineconj}. We informally recapitulate the translation here. First, we may assume without loss of generality that $\Sigma$ has no function symbols, since function symbols can be eliminated in favor of suitably axiomatized predicate symbols. Let $\Sigma^\dag$ have the components
    \begin{align*}
        \S_{\Sigma^\dag} &= \{\s^\dag\} \\
        \F_{\Sigma^\dag} &= \emptyset \\
        \P_{\Sigma^\dag} &= \{P_\s : \s \in \S_{\Sigma}\} \cup \P_{\Sigma}.
    \end{align*}
    Now, a $\Sigma$-formula $\varphi$ can be translated to $\Sigma^\dag$-formula $\varphi^\dag$ by using the predicates $P_\s$ to relativize quantifiers to their respective sorts, as in the proof of \Cref{Lowenheim}.
    Then, let
    \[
        \T^\dag = \T^\dag_1 \cup \T^\dag_2 \cup \T^\dag_3,
    \]
    where
    \begin{itemize}
        \item $\T^\dag_1$ is the set of translated axioms of $\T$,
        \item $\T^\dag_2$ is a set of sentences asserting that every element belongs to exactly one sort (this is where we use the assumption that $\Sigma$ has finitely many sorts), and every sort is nonempty, and
        \item $\T^\dag_3$ is a set of sentences asserting that predicates are true only if they are applied to elements of the appropriate arity.
    \end{itemize}

    Now, there is a one-to-one correspondence between models of $\T$ and models of $\T^\dag$, such that a $\T$-model $\sA$ corresponds to a $\T^\dag$-model $\sB$ with $|{\s^\dag}^\sB| = \sum_{\s \in \S_{\Sigma}} |\s^\sA|$. Indeed, given a $\T$-model $\sA$, construct a $\T^\dag$-model $\sB$ by letting
    \begin{align*}
        {\s^\dag}^\sB &= \bigcup_{\s \in \S_{\Sigma}} \s^\sA \\
        P_\s^\sB &= \s^\sA \quad\text{for each} \ \s \in \S_{\Sigma} \\
        P^\sB &= P^\sA \quad\text{for each} \ P \in \P_{\Sigma}.
    \end{align*}
    Conversely, given a $\T^\dag$-model $\sB$, construct a $\T$-model $\sA$ by letting
    \begin{align*}
        \s^\sA &= P_\s^\sB \quad\text{for each} \ \s \in \S_{\Sigma} \\
        P^\sA &= P^\sB \quad\text{for each} \ P \in \P_{\Sigma}.
    \end{align*}

    Using this correspondence, we see that $\T^\dag$ is a $\Sigma^\dag$-theory all of whose models are infinite, and that $\T^\dag$ is $\kappa$-categorical. By \Cref{losvaughtunsorted}, $\T^\dag$ is complete. Thus, $\vdash_{\T^\dag} \varphi^\dag$ or $\vdash_{\T^\dag} \neg \varphi^\dag$ for every $\Sigma$-sentence $\varphi$. Hence, $\vdash_{\T} \varphi$ or $\vdash_{\T} \neg \varphi$, so $\T$ is complete.\qed
\end{proof}

\section{Proof of \Cref{losvaught}}
\losvaught*

\begin{proof}
    Suppose $\T$ is not complete. Then, for some sentence $\varphi$, the theories $\T_0 = \T \cup \{\varphi\}$ and $\T_1 = \T \cup \{\neg \varphi\}$ are consistent. Let $\sA_0$ and $\sA_1$ be models of $\T_0$ and $\T_1$ respectively. Since $\sA_0$ and $\sA_1$ are also models of $\T$, both are strongly infinite. By \Cref{middlesplit}, there are models $\sA'_0$ and $\sA'_1$ that are elementarily equivalent to $\sA_0$ and $\sA_1$ respectively such that $|\s^{\sA'_0}| = |\s^{\sA'_1}| = \kappa(\s)$ for all $\s\in \S_{\Sigma}$. Since $\sA'_0 \vDash \varphi$ and $\sA'_1 \vDash \neg \varphi$, the models $\sA'_0$ and $\sA'_1$ are not isomorphic, contradicting the assumption that $\T$ is $\kappa$-categorical.\qed
\end{proof}

\section{Illustrations}

\begin{figure}
\begin{center}
\adjustbox{max width=0.99\textwidth}{
\setlength\extrarowheight{-5pt}
\begin{tabular}{l  b{8cm} | b{8cm}  l}

& \begin{center}\underline{Downwards}\end{center} & \begin{center}\underline{Upwards} \end{center}\\

\raisebox{2\height }{\rotatebox[origin=c]{90}{Translation}} & 
\begin{center}
\begin{tikzpicture}[scale=0.95, every mark/.append style={draw=white}, mark size=2.4pt]
\begin{axis}[
    xmin=-0.1, xmax=1.3,
    ymin=0, ymax=1,
    axis lines=center,
    axis on top=true,
    axis y line=none,
    domain=0:1,
    clip=false,
    xtick={0, 0.1, 0.4, 0.6, 0.9, 1.2},
    xticklabels={$\s_{1}$, $\s_{2}$, $\s_{n}$, $\s^{\prime}_{1}$, $\s^{\prime}_{i}$,$\s^{\prime}_{m}$},
    extra x ticks={0.25, 0.75, 1.05},
    extra x tick style={tick style={draw=none}},
    extra x tick labels={$\cdots$, $\cdots$, $\cdots$}
    ]
    \addplot[mark=none, black, thick] coordinates {(-0.1,0.25) (1.3,0.25)};
    \addplot[mark=none, black, thick] coordinates {(-0.1,0.3475) (1.3,0.3475)};
    \node at (axis cs:-0.1, 0.215){$\aleph_{0}$};
    \node at (axis cs:-0.1, 0.3825){$\kappa$};
    \addplot[mark=none, black, thick, dotted] coordinates {(0.0,0) (0.0,0.2)};
    \addplot[mark=none, black, thick, dotted] coordinates {(0.1,0) (0.1,0.075)};
    \addplot[mark=none, black, thick, dotted] coordinates {(0.9,0) (0.9,0.3475)};
    \addplot[mark=none, black, thick, dotted] coordinates {(1.2,0) (1.2,0.29)};
    \addplot[mark=none, red, thick, dotted] coordinates {(1.2,0.29) (1.2,0.475)};
    \addplot[mark=none, red, thick, dotted] coordinates {(0.9,0.3475) (0.9,0.4)};
    \addplot[mark=none, black, thick, dotted] coordinates {(0.4,0) (0.4,0.125)};
    \addplot[mark=none, red, thick, dotted] coordinates {(0.6,0.315) (0.6,0.375)};
    \addplot[mark=none, black, thick, dotted] coordinates {(0.6,0) (0.6,0.315)};
    \addplot[only marks] 
table {
 0.6 0.315
 0.9 0.3475
 1.2 0.29
};
\addplot[only marks, rotated halfcircle=-90]
table {
 0.0 0.2
  0.1 0.075
 0.4 0.125
};
\addplot[only marks, mark=*,mark options={red}] 
table {
 0.6 0.375
 0.9 0.4
 1.2 0.475
};
\end{axis}
\end{tikzpicture}\\
(a)~\Cref{Lowenheim}
\vspace{-3mm}
\end{center} 
&
\begin{center}
\begin{tikzpicture}[scale=0.95, every mark/.append style={draw=white}, mark size=2.4pt]
\begin{axis}[
    xmin=-0.1, xmax=1.3,
    ymin=0, ymax=1,
    axis lines=center,
    axis on top=true,
    axis y line=none,
    domain=0:1,
    clip=false,
    xtick={0, 0.1, 0.4, 0.6, 0.9, 1.2},
    xticklabels={$\s_{1}$, $\s_{2}$, $\s_{n}$, $\s^{\prime}_{1}$, $\s^{\prime}_{i}$,$\s^{\prime}_{m}$},
    extra x ticks={0.25, 0.75, 1.05},
    extra x tick style={tick style={draw=none}},
    extra x tick labels={$\cdots$, $\cdots$, $\cdots$}
    ]
    \addplot[mark=none, black, thick] coordinates {(-0.1,0.25) (1.3,0.25)};
    \addplot[mark=none, black, thick] coordinates {(-0.1,0.53) (1.3,0.53)};
    \node at (axis cs:-0.1, 0.285){$\aleph_{0}$};
    \node at (axis cs:-0.1, 0.495){$\kappa$};
    \addplot[mark=none, black, thick, dotted] coordinates {(0.0,0) (0.0,0.2)};
    \addplot[mark=none, black, thick, dotted] coordinates {(0.1,0) (0.1,0.075)};
    \addplot[mark=none, black, thick, dotted] coordinates {(0.9,0) (0.9,0.53)};
    \addplot[mark=none, black, thick, dotted] coordinates {(1.2,0) (1.2,0.505)};
    \addplot[mark=none, black, thick, dotted] coordinates {(0.4,0) (0.4,0.125)};
    \addplot[mark=none, black, thick, dotted] coordinates {(0.6,0) (0.6,0.43)};
    \addplot[only marks] 
table {
 0.6 0.43
 0.9 0.53
 1.2 0.505
};
\addplot[only marks, rotated halfcircle=-90]
table {
 0.0 0.2
  0.1 0.075
 0.4 0.125
};
\addplot[only marks, mark=*,mark options={red}] 
table {
 0.6 0.375
 0.9 0.4
 1.2 0.475
};
\end{axis}
\end{tikzpicture}\\
(d)~\Cref{translated ULS}
\vspace{-3mm}
\end{center}
& \raisebox{2\height }{\rotatebox[origin=c]{90}{Translation}}
\\&&&\\&&&\\

\raisebox{2.5\height }{\rotatebox[origin=c]{90}{General}} & \begin{center}
\begin{tikzpicture}[scale=0.95, every mark/.append style={draw=white}, mark size=2.4pt]
\begin{axis}[
    xmin=-0.1, xmax=1.3,
    ymin=0, ymax=1,
    axis lines=center,
    axis on top=true,
    axis y line=none,
    domain=0:1,
    clip=false,
    xtick={0, 0.1, 0.4, 0.6, 0.9, 1.2},
    xticklabels={$\s_{1}$, $\s_{2}$, $\s_{n}$, $\s^{\prime}_{1}$, $\s^{\prime}_{i}$,$\s^{\prime}_{m}$},
    extra x ticks={0.25, 0.75, 1.05},
    extra x tick style={tick style={draw=none}},
    extra x tick labels={$\cdots$, $\cdots$, $\cdots$}
    ]
    \addplot[mark=none, black, thick] coordinates {(-0.1,0.25) (1.3,0.25)};
    \addplot[mark=none, black, thick] coordinates {(-0.1,0.3075) (1.3,0.3075)};
    \node at (axis cs:-0.1, 0.215){$\aleph_{0}$};
    \node at (axis cs:-0.1, 0.3425){$\kappa$};
    \addplot[mark=none, black, thick, dotted] coordinates {(0.0,0) (0.0,0.2)};
    \addplot[mark=none, black, thick, dotted] coordinates {(0.1,0) (0.1,0.075)};
    \addplot[mark=none, black, thick, dotted] coordinates {(0.9,0) (0.9,0.3075)};
    \addplot[mark=none, black, thick, dotted] coordinates {(1.2,0) (1.2,0.3075)};
    \addplot[mark=none, red, thick, dotted] coordinates {(1.2,0.3075) (1.2,0.475)};
    \addplot[mark=none, red, thick, dotted] coordinates {(0.9,0.3075) (0.9,0.4)};
    \addplot[mark=none, black, thick, dotted] coordinates {(0.4,0) (0.4,0.125)};
    \addplot[mark=none, red, thick, dotted] coordinates {(0.6,0.3075) (0.6,0.375)};
    \addplot[mark=none, black, thick, dotted] coordinates {(0.6,0) (0.6,0.3075)};
    \addplot[only marks] 
table {
 0.6 0.3075
 0.9 0.3075
 1.2 0.3075
};
\addplot[only marks, rotated halfcircle=-90]
table {
 0.0 0.2
  0.1 0.075
 0.4 0.125
};
\addplot[only marks, mark=*,mark options={red}] 
table {
 0.6 0.375
 0.9 0.4
 1.2 0.475
};
\end{axis}
\end{tikzpicture}\\
(b)~\Cref{generalized LST}
\vspace{-3mm}
\end{center} 
&\begin{center}
\begin{tikzpicture}[scale=0.95, every mark/.append style={draw=white}, mark size=2.4pt]
\begin{axis}[
    xmin=-0.1, xmax=1.3,
    ymin=0, ymax=1,
    axis lines=center,
    axis on top=true,
    axis y line=none,
    domain=0:1,
    xtick={0, 0.1, 0.4, 0.6, 0.9, 1.2},
    xticklabels={$\s_{1}$, $\s_{2}$, $\s_{n}$, $\s^{\prime}_{1}$, $\s^{\prime}_{i}$,$\s^{\prime}_{m}$},
    extra x ticks={0.25, 0.75, 1.05},
    extra x tick style={tick style={draw=none}},
    extra x tick labels={$\cdots$, $\cdots$, $\cdots$}
    ]
    \addplot[mark=none, black, thick] coordinates {(-0.1,0.25) (1.3,0.25)};
    \node at (axis cs:-0.05, 0.29){$\aleph_{0}$};
    \node at (axis cs:-0.05, 0.495){$\kappa$};
    \addplot[mark=none, black, thick, dotted] coordinates {(0.0,0) (0.0,0.2)};
    \addplot[mark=none, black, thick, dotted] coordinates {(0.1,0) (0.1,0.075)};
    \addplot[mark=none, black, thick, dotted] coordinates {(0.9,0) (0.9,0.53)};
    \addplot[mark=none, black, thick, dotted] coordinates {(1.2,0) (1.2,0.53)};
    \addplot[mark=none, black, thick, dotted] coordinates {(0.4,0) (0.4,0.125)};
    \addplot[mark=none, black, thick, dotted] coordinates {(0.6,0) (0.6,0.53)};
    \addplot[mark=none, black, thick] coordinates {(-0.1,0.53) (1.3,0.53)};
    \addplot[only marks] 
table {
 0.6 0.53
 0.9 0.53
 1.2 0.53
};
\addplot[only marks, rotated halfcircle=-90]
table {
 0.0 0.2
  0.1 0.075
 0.4 0.125
};
\addplot[only marks, mark=*,mark options={red}] 
table {
 0.6 0.375
 0.9 0.4
 1.2 0.475
};
\end{axis}
\end{tikzpicture}\\
(e)~\Cref{LowenheimSkolemUpwards}
\vspace{-3mm}

\end{center}
& \raisebox{2.5\height }{\rotatebox[origin=c]{90}{General}} \\&&&\\&&&\\&&&\\

\raisebox{5\height }{\rotatebox[origin=c]{90}{Split}} & \begin{center}
\begin{tikzpicture}[scale=0.95, every mark/.append style={draw=white}, mark size=2.4pt]
\begin{axis}[
    xmin=-0.1, xmax=1.8,
    ymin=0, ymax=1,
    axis lines=center,
    axis on top=true,
    axis y line=none,
    domain=0:1,
    clip=false,
    xtick={0, 0.1, 0.4, 0.6, 0.7, 1.0, 1.3, 1.6, 1.7},
    xticklabels={$\s_{1}$, $\s_{2}$, $\s_{n}$, $\s^{\prime}_{1}$, $\s^{\prime\prime}_{1}$,$\s^{\prime}_{i}$,$\s^{\prime\prime}_{j}$,$\s^{\prime}_{m}$,$\s^{\prime\prime}_{m}$},
    extra x ticks={0.25, 0.85, 1.15, 1.45},
    extra x tick style={tick style={draw=none}},
    extra x tick labels={$\cdots$, $\cdots$, $\cdots$, $\cdots$}
    ]
    \addplot[mark=none, black, thick] coordinates {(-0.1,0.25) (1.8,0.25)};
    \addplot[mark=none, black, thick] coordinates {(-0.1,0.3075) (1.8,0.3075)};
    \addplot[mark=none, black, thick] coordinates {(-0.1,0.4) (1.8,0.4)};
    \node at (axis cs:-0.1, 0.215){$\aleph_{0}$};
    \node at (axis cs:-0.1, 0.3525){$\kappa^{\prime}$};
    \node at (axis cs:-0.1, 0.435){$\kappa^{\prime\prime}$};
    \addplot[mark=none, black, thick, dotted] coordinates {(0.0,0) (0.0,0.2)};
    \addplot[mark=none, black, thick, dotted] coordinates {(0.1,0) (0.1,0.075)};
    \addplot[mark=none, black, thick, dotted] coordinates {(1.0,0) (1.0,0.3075)};
    \addplot[mark=none, black, thick, dotted] coordinates {(1.6,0) (1.6,0.3075)};
    \addplot[mark=none, red, thick, dotted] coordinates {(1.6,0.3075) (1.6,0.475)};
    \addplot[mark=none, red, thick, dotted] coordinates {(1.0,0.3075) (1.0,0.4)};
    \addplot[mark=none, black, thick, dotted] coordinates {(0.4,0) (0.4,0.125)};
    \addplot[mark=none, red, thick, dotted] coordinates {(0.6,0.3075) (0.6,0.375)};
    \addplot[mark=none, black, thick, dotted] coordinates {(0.6,0) (0.6,0.3075)};
    \addplot[mark=none, black, thick, dotted] coordinates {(0.7,0) (0.7,0.4)};
    \addplot[mark=none, red, thick, dotted] coordinates {(0.7,0.4) (0.7,0.445)};
    \addplot[mark=none, black, thick, dotted] coordinates {(1.3,0) (1.3,0.4)};
    \addplot[mark=none, red, thick, dotted] coordinates {(1.3,0.4) (1.3,0.465)};
    \addplot[mark=none, black, thick, dotted] coordinates {(1.7,0) (1.7,0.4)};
    \addplot[mark=none, red, thick, dotted] coordinates {(1.7,0.4) (1.7,0.545)};
    \addplot[only marks] 
table {
 0.6 0.3075
 0.7 0.4
 1.0 0.3075
 1.3 0.4
 1.6 0.3075
 1.7 0.4
};
\addplot[only marks, rotated halfcircle=-90]
table {
 0.0 0.2
  0.1 0.075
 0.4 0.125
};
\addplot[only marks, mark=*,mark options={red}] 
table {
 0.6 0.375
 0.7 0.445
 1.0 0.4
 1.3 0.465
 1.6 0.475
 1.7 0.545
};
\end{axis}
\end{tikzpicture} \\
(c)~\Cref{downwardssplit}
\vspace{-3mm}
\end{center} 
&\begin{center}
\begin{tikzpicture}[scale=0.95, every mark/.append style={draw=white}, mark size=2.4pt]
\begin{axis}[
    xmin=-0.1, xmax=1.8,
    ymin=0, ymax=1,
    axis lines=center,
    axis on top=true,
    axis y line=none,
    domain=0:1,
    clip=false,
    xtick={0, 0.1, 0.4, 0.6, 0.7, 1.0, 1.3, 1.6, 1.7},
    xticklabels={$\s_{1}$, $\s_{2}$, $\s_{n}$, $\s^{\prime}_{1}$, $\s^{\prime\prime}_{1}$,$\s^{\prime}_{i}$,$\s^{\prime\prime}_{j}$,$\s^{\prime}_{m}$,$\s^{\prime\prime}_{m}$},
    extra x ticks={0.25, 0.85, 1.15, 1.45},
    extra x tick style={tick style={draw=none}},
    extra x tick labels={$\cdots$, $\cdots$, $\cdots$, $\cdots$}
    ]
    \addplot[mark=none, black, thick] coordinates {(-0.1,0.25) (1.8,0.25)};
    \addplot[mark=none, black, thick] coordinates {(-0.1,0.51) (1.8,0.51)};
    \addplot[mark=none, black, thick] coordinates {(-0.1,0.59) (1.8,0.59)};
    \node at (axis cs:-0.1, 0.285){$\aleph_{0}$};
    \node at (axis cs:-0.1, 0.475){$\kappa^{\prime}$};
    \node at (axis cs:-0.1, 0.555){$\kappa^{\prime\prime}$};
    \addplot[mark=none, black, thick, dotted] coordinates {(0.0,0) (0.0,0.2)};
    \addplot[mark=none, black, thick, dotted] coordinates {(0.1,0) (0.1,0.075)};
    \addplot[mark=none, black, thick, dotted] coordinates {(1.0,0) (1.0,0.51)};
    \addplot[mark=none, black, thick, dotted] coordinates {(1.6,0) (1.6,0.51)};
    \addplot[mark=none, black, thick, dotted] coordinates {(0.4,0) (0.4,0.125)};
    \addplot[mark=none, black, thick, dotted] coordinates {(0.6,0) (0.6,0.51)};
    \addplot[mark=none, black, thick, dotted] coordinates {(0.7,0) (0.7,0.59)};
    \addplot[mark=none, black, thick, dotted] coordinates {(1.3,0) (1.3,0.59)};
    \addplot[mark=none, black, thick, dotted] coordinates {(1.7,0) (1.7,0.59)};
    \addplot[only marks] 
table {
 0.6 0.51
 0.7 0.59
 1.0 0.51
 1.3 0.59
 1.6 0.51
 1.7 0.59
};
\addplot[only marks, rotated halfcircle=-90]
table {
 0.0 0.2
  0.1 0.075
 0.4 0.125
};
\addplot[only marks, mark=*,mark options={red}] 
table {
 0.6 0.375
 0.7 0.445
 1.0 0.4
 1.3 0.465
 1.6 0.475
 1.7 0.545
};
\end{axis}
\end{tikzpicture}\\
(f)~\Cref{upwardssplit}
\vspace{-3mm}
\end{center} &
\raisebox{5\height }{\rotatebox[origin=c]{90}{Split}}
\end{tabular}}
\end{center}
    \caption{Downward and Upward  L{\"o}wenheim--Skolem Theorems.}
    \label{fig:diagrams}
\end{figure}
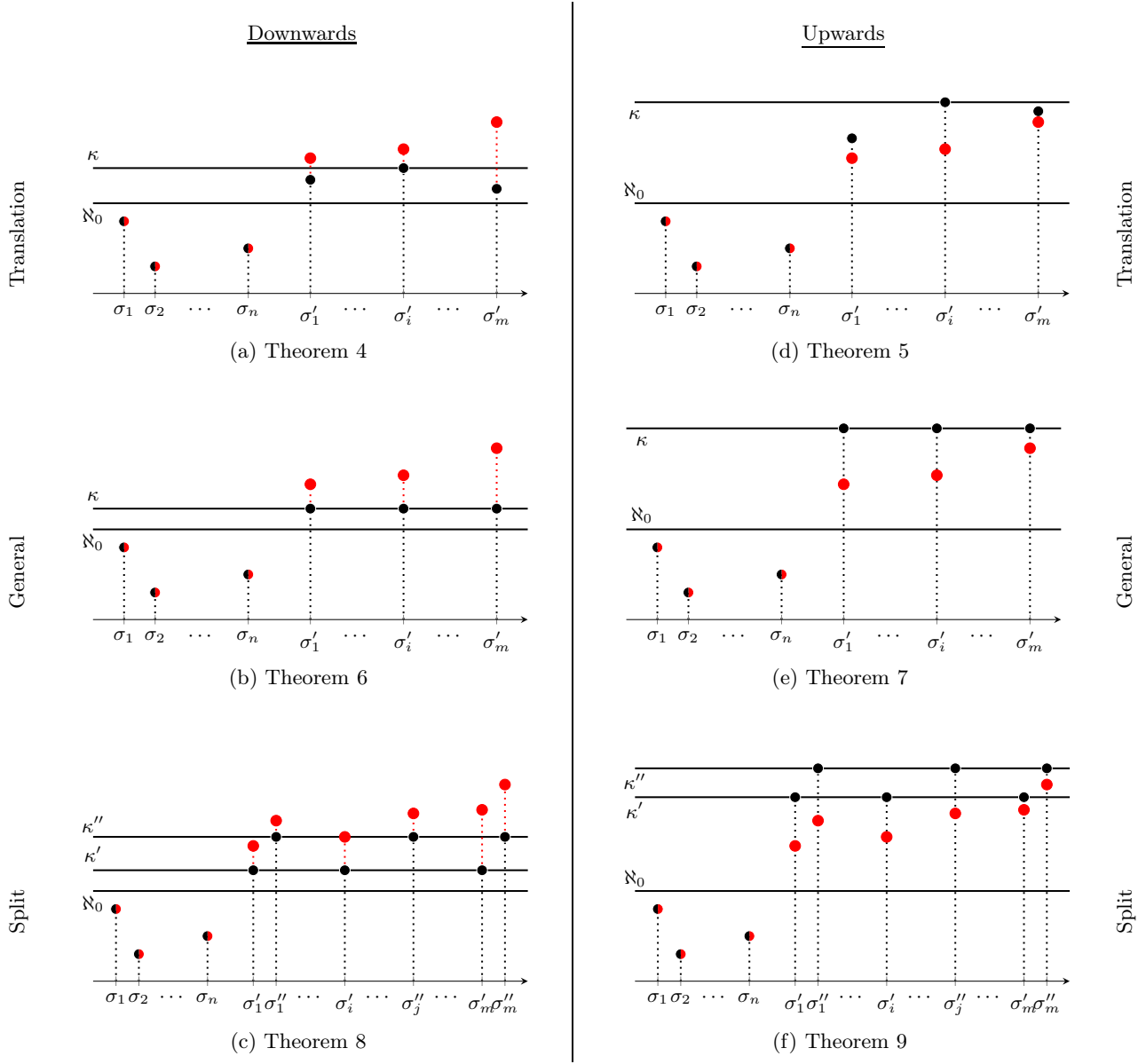

In the body of the paper, we included \Cref{fig:illmid,fig:illmidsplit}, in order
to visualize the effect of the new combined L{\"o}wenheim--Skolem theorems.
We only included the diagrams for the combined theorems.
In this appendix, we include similar diagrams for the downward and upward theorems as well.

\begin{description}
    \item[Translation-based proofs] 
\Cref{Lowenheim,translated ULS} are proven using a translation-based approach.
We visualize their effects in,
 respectively, 
\Cref{fig:diagrams}.(a) and \Cref{fig:diagrams}.(d).
Notice  that $\s^{\prime}_{i}$ is the sort in the resulting structure that gets assigned the desired cardinality $\kappa$, although more than one such sort could exist; the final cardinalities for other sorts with infinite domains are bounded from below by $\aleph_{0}$ and from above by $\kappa$, but we do not have any further control over them.
\item[Direct proofs]
\Cref{generalized LST,LowenheimSkolemUpwards} 
are proven in a direct manner, by addapting the single-sorted classical proofs
to the many-sorted case.
We visualize \Cref{generalized LST,LowenheimSkolemUpwards} in 
\Cref{fig:diagrams}.(b) and \Cref{fig:diagrams}.(e), respectively.
With these new results, we are able to set the cardinalities
of all the infinite sorts, but only to the exact same cardinal.

\item[Direct proofs for split signatures]
We provide examples of applying \Cref{downwardssplit,upwardssplit} in, respectively, 
\Cref{fig:diagrams}.(c), \Cref{fig:diagrams}.(f).
We assume that our signature is split into $\Sigma_{\lambda_{1}}$ and $\Sigma_{\lambda_{2}}$, where $\Sasa{\geq}(\lambda_{1})=\{\s^{\prime}_{1},\ldots,\s^{\prime}_{m}\}$ and $\Sasa{\geq}(\lambda_{2})=\{\s^{\prime\prime}_{1},\ldots,\s^{\prime\prime}_{m}\}$ 
(the sorts with finite interpretations can belong to either $\lambda_1$ or $\lambda_2$).
Then, $\kappa^{\prime}$ is the cardinal associated with $\Sigma_{\lambda_{1}}$, and $\kappa^{\prime\prime}$ the cardinal associated with $\Sigma_{\lambda_{2}}$.
Thus, for split signatures, we are able
to choose a cardinality for each class of sorts.
\end{description}

Overall, going downward in either the left or right part of \Cref{fig:diagrams},
we see that the translation-based proofs only let us set
the maximal cardinality
(diagrams (a) and (d)), 
while the theorems proved using the direct approach allow us to set all sorts
to a single cardinality (diagrams (b) and (e)).
For split signatures, we can do even better, and have a dedicated
cardinality for each class of sorts (diagrams (c) and (f)).

\end{document}